\newcommand{\optionaldesc}[2]{%
  \phantomsection
  #1\protected@edef\@currentlabel{#1}\label{#2}%
}
\colorlet{link}{red!60!black}
\theoremstyle{plain}
\newtheorem{theorem}{Theorem}[section]
\newtheorem{lemma}[theorem]{Lemma}
\newtheorem{question}[theorem]{Question}
\newtheorem{claim}[theorem]{Claim}
\newtheorem{proposition}[theorem]{Proposition}
\newtheorem{corollary}[theorem]{Corollary}
\theoremstyle{definition}
\newtheorem{definition}[theorem]{Definition}
\theoremstyle{remark}
\newtheorem{remark}[theorem]{Remark}
\newcommand{\Var}[0]{\text{Var}}
\newcommand{\pr}[1]{\mathbb{P}\!\left(#1\right)}
\newcommand{\prstart}[2]{\mathbb{P}_{#2}\!\left(#1\right)}
\newcommand{\E}[1]{\mathbb{E}\!\left[#1\right]}
\newcommand{\prcond}[3]{\mathbb{P}_{#3}\!\left(#1\;\middle\vert\;#2\right)}
\newcommand{\econd}[2]{\mathbb{E}\!\left[#1\;\middle\vert\;#2\right]}
\newcommand{\Pb}[0]{\mathbb{P}}
\newcommand{\nkk}{n^{\frac{k}{k+1}}}
\newcommand{\nkkk}{n^{-\frac{k}{k+1}}}
\DeclareMathOperator{\LE}{LE}
\newcommand{\AB}{Aldous--Broder }
\DeclareMathOperator{\LERW}{LERW}
\DeclareMathOperator{\UST}{UST}
\newcommand{\cadlag}{c\`adl\`ag }
\renewcommand{\epsilon}{\varepsilon}
\newcommand{\T}{\mathcal{T}}
\newcommand{\Tbg}{\T_{\beta, \gamma}}
\newcommand{\TAB}{T^{\mathrm{AB}}}
\newcommand{\TABk}{T^{\mathrm{AB,k}}}
\newcommand{\TABn}{T^{\mathrm{AB}}_{\infty,n}}
\newcommand{\TABin}{T^{\mathrm{AB}}_{i,n}}
\newcommand{\TABjn}{T^{\mathrm{AB}}_{j,n}}
\newcommand{\TABjjn}{T^{\mathrm{AB}}_{j+1,n}}
\newcommand{\TABiin}{T^{\mathrm{AB}}_{i-1,n}}
\newcommand{\TWilk}{T^{\mathrm{Wil,k}}}
\newcommand{\branch}{\mathcal{B}}
\newcommand{\SB}{\textsf{SB}}
\newcommand{\SBk}{\textsf{SB}^{(j)}}
\newcommand{\eps}{\varepsilon}
\newcommand{\Av}{\textsf{Av}}
\newcommand{\Ter}{\textsf{Ter}}
\newcommand{\RR}{\mathbb{R}}
\newcommand{\XXbb}{\mathbb{X}}
\newcommand{\NN}{\mathbb{N}}
\author{
Eleanor Archer
\thanks{Modal'X, UMR CNRS 9023, UPL, Univ. Paris-Nanterre, F92000 Nanterre, France. \textsf{eleanor.archer@parisnanterre.fr}
} \qquad  
Matan Shalev 
\thanks{Tel Aviv University, Israel. \textsf{matanshalev@mail.tau.ac.il}
}
}
\title{Random choice spanning trees}
\begin{document}
\date{}
\vspace{-2cm}
\maketitle 

\begin{abstract}
    In this paper we introduce a new model of random spanning trees that we call \textit{choice spanning trees}, constructed from so-called \textit{choice random walks}. These are random walks for which each step is chosen from a subset of random options, according to some pre-defined rule. The choice spanning trees are constructed by running a choice modified version of Wilson's algorithm or the Aldous-Broder algorithm on the complete graph. We show that the scaling limits of these choice spanning trees are slight variants of random aggregation trees previously considered by Curien and Haas (2017). Moreover, we show that the loop-erasure of a choice random walk run on the complete graph converges after rescaling to a generalized Rayleigh process, extending a result of Evans, Pitman and Winter (2006). These are all natural extensions of similar results for uniform spanning trees.
\end{abstract}

\begin{figure}[h]

\includegraphics[width=16cm]{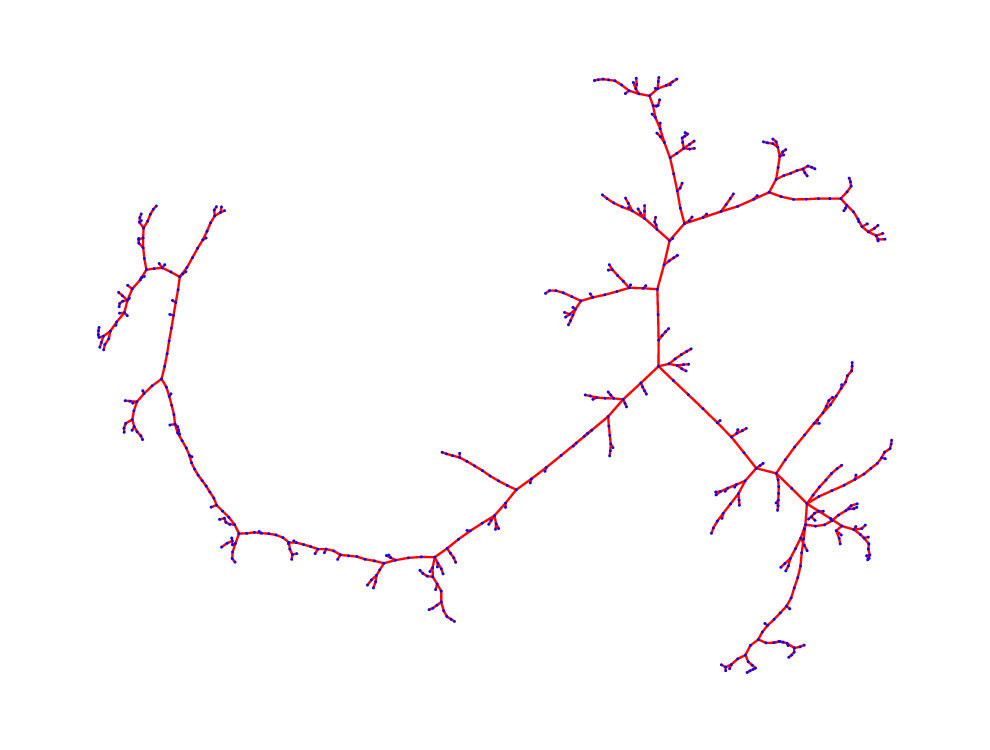}

\caption{A $3$-choice spanning tree}\label{fig:STs}
\end{figure}

\newpage
\tableofcontents

\section{Introduction}
Consider a uniformly labeled tree with $n$ vertices. A very classical result of Aldous \cite{AldousCRTI, AldousCRTII, AldousCRTIII} says that the scaling limit of this tree is the Brownian continuum random tree (CRT). In fact, such a discrete tree has the law of the \textbf{uniform spanning tree} (UST) of the complete graph $K_n$, and this result is much more general: the scaling limit of the uniform spanning tree of any sufficiently high-dimensional graph is also the CRT - see \cite{PeresRevelleUSTCRT, Schweinsberg, ANS2021ghp, archer2023ghp} and references therein.

The purpose of the present paper is to generalize Aldous' result in a different direction. It is well-known that standard algorithms for constructing USTs (such as Wilson's algorithm or the \AB algorithm) are essentially a discrete approximation of Aldous' \textbf{stick-breaking} construction of the CRT. In particular, the UST algorithms construct USTs branch-by-branch by removing loops from simple random walk trajectories. In a similar spirit, the stick-breaking construction involves sampling a collection of Poisson points on the positive half-line with intensity $t \ dt$, thus dividing it into a collection of ``sticks'', and then gluing these sticks together one at a time to construct the CRT branch-by-branch as well.

In this paper we introduce variants of these UST construction algorithms, in which we replace simple random walks with a different random walk. These variants output different random spanning trees of $K_n$ that we call \textbf{choice spanning trees} (CSTs). The key tool in the construction (and the reason for the nomenclature) of these CSTs is the use of the \textbf{choice random walk}. A $k$-choice random walk evolves like a simple random walk but with one key difference: at each step of the walk, we sample $k$ i.i.d. options for its next step, and then pick one of these options according to a pre-defined rule. They have been classically used in computer science to improve the efficacy of various algorithms; see \cite{avin2006power, georgakopoulos2020choice, georgakopoulos2022power} for more details.

In our case we choose a rule that (essentially) tries to avoid forming long loops. We will make the construction precise in Section \ref{sctn: crw}. This makes the branches of the spanning tree longer; we will soon see that for $k$-choice spanning tree of $K_n$, distances scale like $\nkk$ (we note that the case $k=1$ corresponds to the classical model of USTs).

It turns out that the scaling limits of these CSTs can be described by a stick-breaking process similar to that used to construct the CRT, but where we start with a Poisson process of intensity $t^k  dt$ at the beginning of the process. This model is included in those considered by Curien and Haas \cite{curien2017random} and proceeds as follows: the points in the Poisson process break the positive real line up into a series of intervals, that we call \textit{sticks}. We label the sticks in order according to their location on the real line, set $T^{(1)}$ to be equal to a single branch with length equal to that of the first stick, and at the $m^{th}$ step of the algorithm, we take the $m^{th}$ stick and attach it to a point chosen uniformly on $T^{(m-1)}$ (formally, according to renormalized Lebesgue measure on the union of its branches, denoted $\mu_m$). It turns out (see \cite{curien2017random}) that the closure of the limit of the pair $(T^{(m)}, \mu_m)$ exists and is compact almost surely; we denote it by $(\T_{k,0}, \nu_{k,0})$.

We will also need a slight generalization of this construction. We fix a parameter $\gamma \geq 0$ and before breaking the half-line into sticks, we endow it with a measure, denoted by $m_{\gamma}$, with density $t^{\gamma} dt$. We then construct the trees branch-by-branch using the sticks created from the Poisson process as before, but at each step the gluing location on $T^{(m-1)}$ is chosen according to the pushforward of $m_{\gamma}$, renormalized. We will show in Section \ref{sctn:SB convergence} that for appropriate values of $\gamma$, (the closure of) this tree and the associated measure similarly converge to a compact limit, which we denote by $(\T_{k,\gamma}, \nu_{\T_{k, \gamma}})$. In what follows, we will also let $d_{\T{k, \gamma}}$ denote the shortest distance metric on $\T_{k,\gamma}$ (we will see in Section \ref{sctn:SB convergence} that this is well-defined). 

The \AB algorithm (introduced by Aldous and Broder \cite{aldous1990random, broder1989generating}; both also acknowledge Diaconis) consists of running a random walk on a graph $G$ up until its cover time, and then setting $\TAB(G)$ to be equal to the set of first entry edges of each vertex. It can also be run in reverse \cite{HuReverse}. To form the $k$-choice \AB tree, we run a $k$-choice random walk on $G$ up until its cover time, then again set $\TABk (G)$ to be the collection of first entry edges. In fact we will see that for each $k$ there are two natural variants of the $k$ choice random walk to consider; we will call the two resulting spanning trees the \textbf{uniform $k$-choice spanning tree}, denoted $\TABk_{\text{unif}}(G)$, and the \textbf{maximal $k$-choice spanning tree}, denoted $\TABk_{\text{max}}(G)$. (The reasons for the nomenclature will become apparent when we give their precise definitions in Section \ref{sctn:AB alg def}.)

The main result for this algorithm is as follows. Here we let $d_n$ denote the graph metric on the appropriate CST, and $\mu_n$ denotes hitting measure on the CST for the choice random walk used to construct it (we will also formalize this in Section \ref{sctn:AB alg def}).

\begin{theorem}\label{thm:AB convergence}
For all $k \geq 1$, 
\begin{align*}
    \left(\TABk_{\text{max}}(K_n), n^{-\frac{k}{k+1}}d_n, \mu_n \right) &\overset{(d)}{\to} (\T_{k, k-1}, d_{\T_{k, k-1}}, \nu_{\T_{k, k-1}}) \\
    \left(\TABk_{\text{unif}}(K_n), n^{-\frac{k}{k+1}}d_n, \mu_n \right) &\overset{(d)}{\to} (\T_{k, 0}, d_{\T_{k, 0}}, \nu_{\T_{k, 0}})
\end{align*}
     with respect to the Gromov-Hausdorff-Prokhorov topology as $n \to \infty$. In addition, the Hausdorff dimension of $\T_{k, k-1}$ and $\T_{k, 0}$ is equal to $\frac{k+1}{k}$, almost surely.
\end{theorem}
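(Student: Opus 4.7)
The overall strategy is to encode the $k$-choice Aldous--Broder tree on $K_n$, in both its max and uniform variants, as the output of a discrete stick-breaking construction, and then appeal to the general stick-breaking convergence statement established in Section~\ref{sctn:SB convergence} to transfer convergence of the discrete stick-breaking data into Gromov--Hausdorff--Prokhorov convergence of the associated metric measure spaces. I would work with the reverse Aldous--Broder algorithm (or equivalently decompose the forward walk into maximal runs of consecutive new discoveries): each branch ends when the walker first re-enters the set $S_m$ of already-discovered vertices, and the branch is attached to the vertex of $S_m$ at which this re-entry occurs.

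The first main step is to control branch lengths. At a step with $m$ vertices discovered, the walker re-enters $S_m$ precisely when the rule is forced to pick an option inside $S_m$, and for both variants of the $k$-choice walk this happens exactly when all $k$ sampled options lie in $S_m$; hence the single-step re-entry probability is $(m/n)^k$. The length of the branch started after the $m$-th discovery is therefore asymptotically geometric with parameter $(m/n)^k (1+o(1))$, and a standard Poissonian limit for sums of independent geometric variables with small success probabilities shows that, after rescaling by $n^{-k/(k+1)}$, the joint law of the first $M$ branch lengths converges to the successive increments of a Poisson point process on the positive half-line with intensity $t^k\,dt$, matching the stick data of $\T_{k,\gamma}$.

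The second main step is to identify the attachment measure. The attachment vertex is the element of $S_m$ chosen by the rule from the $k$ samples, conditional on all $k$ samples lying in $S_m$. In the uniform variant this conditional law is the uniform distribution on $S_m$, and by induction on the branch number this matches, in the limit, the uniform (arc-length) measure on the already-built tree, giving the $\gamma=0$ case. In the max variant an order-statistic computation for the induced law of the selected vertex shows that the discovery rank $j \in \{1,\dots,m\}$ is chosen with probability asymptotically proportional to $j^{k-1}$; pushing this bias through the stick-breaking parametrisation identifies the limiting attachment measure with the renormalised pushforward of $m_{k-1}$, yielding the $\gamma=k-1$ case. The main technical difficulty is to show \emph{joint} convergence of branch lengths and attachment locations, and to control the contribution of the late branches (of index larger than a cut-off $M$) in both diameter and mass; for this, the explicit Poissonian scaling above provides the necessary tail estimates, in the spirit of the corresponding arguments in~\cite{PeresRevelleUSTCRT, Schweinsberg, ANS2021ghp, archer2023ghp}. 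Combined with the abstract convergence theorem of Section~\ref{sctn:SB convergence}, this yields the stated GHP convergence.

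Finally, the Hausdorff dimension of $\T_{k,\gamma}$ follows from a mass-volume estimate of the form $\nu_{\T_{k,\gamma}}(B(x,r)) \asymp r^{(k+1)/k}$, valid for $\nu_{\T_{k,\gamma}}$-a.e.\ $x$ and small $r>0$. This estimate is a direct consequence of the Poisson-$t^k\,dt$ intensity and the scaling properties of the stick-breaking construction. The upper bound on the Hausdorff dimension is then a covering argument using the sticks themselves, while the matching lower bound follows from a standard Frostman energy estimate for $\nu_{\T_{k,\gamma}}$, mirroring the strategy used in~\cite{curien2017random}.
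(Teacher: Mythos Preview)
Your high-level strategy matches the paper's: encode the choice Aldous--Broder tree as a discrete stick-breaking process, identify the limiting branch lengths as increments of a Poisson process with intensity $t^k\,dt$, identify the attachment law (uniform for the unif variant, order-statistic bias $\sim j^{k-1}$ for the max variant), and then control the late branches for tightness. The single-step re-entry probability $(m/n)^k$ and the $j^{k-1}$ bias are exactly the computations of Proposition~\ref{prop:main stick breaking ingredient}.

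Two points of divergence are worth flagging. First, Section~\ref{sctn:SB convergence} does not provide an ``abstract convergence theorem'' that transfers convergence of discrete stick data directly into GHP convergence; it only shows that the \emph{continuum} partial trees $T^{(i)}$ converge to a compact limit with a limiting measure. The transfer from discrete to continuum is done by hand in Section~\ref{sec: AB trees}: a coupling of the discrete and continuum stick data (Proposition~\ref{claim:coupling of stick breaking}) combined with the deterministic comparison Proposition~\ref{prop:stick breaking close} gives finite-dimensional GH convergence, then Aldous-style exponential tail bounds on $D_n(s,t)$ (Propositions~\ref{prop:tightness AB exp tail}--\ref{cor:AB tightness}) give GH tightness, and finally the measure convergence is handled separately via an urn-model martingale argument (Lemma~\ref{lem:tightness measures AB}), not via the abstract statement of Section~\ref{sctn:SB convergence}. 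Your sketch collapses these into one appeal, which hides the most technical part of the proof.

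Second, your Hausdorff dimension argument is genuinely different. You propose a local mass estimate $\nu(B(x,r))\asymp r^{(k+1)/k}$ followed by a Frostman energy bound for the lower bound. The paper instead follows Aldous's original CRT argument: the upper bound is an explicit cover of $\rho([0,s(\delta)])$ by $\delta$-balls plus branch-end points, and the lower bound is a second-moment estimate on a $\delta$-packing $M_\delta$ built from gaps in the Poisson process. Both routes are valid, but the paper's avoids any analysis of the limiting measure and works purely with the stick lengths; your route would additionally require establishing the ball-mass estimate, which is not proved anywhere in the paper.
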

The claim regarding the Hausdorff dimension was previously shown in \cite{curien2017random} for $\T_{k, 0}$. The Gromov-Hausdorff-Prokhorov (GHP) topology is a natural topology on metric spaces equipped with a measure; we will define it in Section \ref{sctn:GHP def}.

We can also play the same game with random spanning trees constructed using an analogous variant of Wilson's algorithm. In the classical setting, Wilson's algorithm starts by fixing an ordering of the vertices of a graph $G$, say $\{v_1, \ldots, v_n\}$, setting $T^{(0)}$ to be the single vertex $\{v_1\}$, and then at the $m^{th}$ step sampling a simple random walk from $v_m$ to $T^{(m-1)}$ and setting $T^{(m)}$ to be the union of $T^{(m-1)}$ with the loop-erasure of this new path. It was shown by Wilson \cite{Wil96} that this algorithm also outputs a sample of $\UST(G)$.

In our case, we will again run Wilson's algorithm but using $k$-choice random walks instead of simple random walks. Again there will be two natural options for the precise rule of the choice random walk, leading to the construction of two possible random spanning trees, denoted $\TWilk_{\text{unif}}(G)$ and $\TWilk_{\text{max}}(G)$.

\begin{theorem}\label{thm:choice trees the same}
For every $n \geq 1$ and $k \geq 1$,
\begin{align*}
\TWilk_{\text{max}}(K_n) \overset{(d)}{=} \TABk_{\text{max}}(K_n)\quad \text{ and } \quad \TWilk_{\text{unif}}(K_n) &\overset{(d)}{=} \TABk_{\text{unif}}(K_n).
\end{align*}
\end{theorem}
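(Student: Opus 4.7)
The plan is to prove the equalities in distribution by means of a coupling that realises both algorithms on a common source of randomness, in the spirit of Wilson's cycle-popping argument.

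First, I would encode the $k$-choice random walk as a deterministic function of stacks of i.i.d.\ $k$-tuples: for each vertex $v\in K_n$ and each $i\geq 1$, draw a tuple $(U^v_{i,1},\ldots,U^v_{i,k})$ of uniform neighbours of $v$, together with any auxiliary randomness needed to break ties in the unif rule. Whenever the walk leaves $v$ for the $i$-th time it reads the $i$-th tuple off $v$'s stack and selects its next step via the prescribed max or unif rule. Both the Aldous--Broder algorithm (one long walk starting from an arbitrary vertex) and Wilson's algorithm (successive walks from the enumerated vertices) can then be executed deterministically on the same stacks.

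The key step is to prove a cycle-popping lemma for the $k$-choice random walk analogous to Wilson's classical one: from the top tuple at each vertex together with the choice rule one extracts a prescribed arrow, yielding a functional digraph on $K_n$; removing cycles and shifting the relevant stacks down by one produces a final spanning tree that does not depend on the order in which cycles are popped. The Aldous--Broder walk pops cycles in the order dictated by the trajectory of the single walk, while Wilson's algorithm pops them within each vertex-indexed sub-walk. Order-independence forces the two resulting trees to coincide almost surely under the coupling, which gives the required equality in distribution (and in fact the stronger almost-sure identity).

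The main obstacle is verifying that the $k$-choice rule is compatible with cycle popping. In Wilson's classical setting the next step from $v$ is a function of $v$ and the top of $v$'s stack only, so popping a cycle (which merely shifts certain stacks down by one) does not disturb future selections. In our setting the rule may depend on additional history of the walk, and the delicate point will be to show that it factors through the current tuple and a \emph{monotone} piece of history, for instance the set of vertices visited so far or the current partial tree, which is unchanged by popping a completed cycle. On the vertex-transitive graph $K_n$, the exchangeability of the $k$-tuples should make this verification tractable, though it will require a careful reading of the definitions given in Section \ref{sctn:AB alg def}. If cycle popping turns out to be delicate for the specific rule, a back-up route is to couple the two algorithms through the time-reversal of the Aldous--Broder walk on $K_n$, checking that its law is preserved under reversal by symmetry and then identifying its excursions between consecutive first-visits to new vertices with the successive Wilson walks.
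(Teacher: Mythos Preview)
Your proposal has a genuine gap: the $k$-choice rule is not compatible with cycle popping in the way you hope, and this is not a technicality that can be patched.

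Cycle popping works in the classical setting because the arrow at $v$ is a function of the top of $v$'s stack \emph{and nothing else}. In the $k$-choice rule of this paper the arrow depends on the current loop-erasure $\Av_n$ (and, in the maximal variant, on the time stamps encoding the order in which vertices entered it). Concretely, when the tip of the walk is at $v$ and reads the tuple $(U_1,\ldots,U_k)$, the next step is ``the first $U_j$ not in $\Ter\cup\Av_n$'', and if all $U_j$ lie in $\Av_n$ you must choose the one whose index in the current loop-erasure is largest. Thus the same stack tuple at $v$ produces different arrows depending on which loops have already been popped. This destroys the order-independence lemma: popping a cycle shifts the stacks at its vertices, but it also changes $\Av_n$, which changes arrows at vertices \emph{outside} the cycle the next time they are visited. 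Your own caveat --- that the rule should ``factor through the current tuple and a monotone piece of history'' --- identifies exactly the property that fails: $\Av_n$ is not monotone (it shrinks under loop erasure), and the maximal rule depends on finer ordering information than the visited set. The backup via time reversal is not fleshed out and there is no reason to expect the reversed $k$-choice walk to have the same law; the result of \cite{HuReverse} relies on reversibility of the simple random walk, which the history-dependent $k$-choice walk lacks.

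The paper takes a completely different and much more elementary route that is specific to $K_n$. Both algorithms build the tree branch by branch; the paper shows that, conditionally on the current partial tree having $m$ vertices, the next branch length has the same law in both algorithms, namely
\[
\prcond{\tau>i+1}{\tau>i,\ |T_j|=m}{}=1-\left(\frac{m+i+1}{n}\right)^k.
\]
For the Aldous--Broder side this is an immediate one-line computation. For Wilson's side it is obtained via the Laplacian random walk representation of the loop-erasure (\cref{sctn:Laplacian RW}), using that on $K_n$ the hitting probabilities depend only on cardinalities. Since the attachment rule (max or unif) is, by definition, the same in both algorithms, equality of branch-length laws gives equality of tree laws. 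Note that the authors explicitly do not expect the theorem to hold on general graphs (see the end of \cref{sec: Wilson algorithm trees}), which is another indication that a general cycle-popping argument cannot be the right mechanism here.
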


As a corollary we also of course recover the same scaling limits as in \cref{thm:AB convergence} for the Wilson choice trees, endowed with analogous metrics and measures.


Our final result concerns the building blocks of CSTs, the loop-erased choice random walks. In the case of simple random walks, it was shown by Evans, Pitman and Winter \cite{EPW} that if $(Y^{(n)}_m)_{m \geq 0}$ is a simple random walk on $K_n$, and $Z^{(n)}_m = |\LE(Y^{(n)}[0,m])|$, where $\LE$ denotes the operation of taking the loop-erasure, then
\begin{equation}\label{eqn:LERW Rayeigh intro}
\left(n^{-\frac{1}{2}}Z^{(n)}_{\lfloor m n^{\frac{1}{2}}\rfloor}\right)_{m \geq 0} \overset{(d)}{\to} (R_t)_{t \geq 0},
\end{equation}
where $(R_t)_{t \geq 0}$ is called the \textbf{Rayleigh process} and the convergence holds with respect to the Skorokhod-$J_1$ topology (this will also be defined in Section \ref{sctn:Skor J1 def}).

The analogous result is true in our setting. Fix $k \geq 1$; again we will see in Section \ref{sctn: crw} that there are two natural ways to define $k$-choice random walks on $K_n$. Denote these by $(Y^{(n,k,\text{unif})}_m)_{m \geq 0}$ and $(Y^{(n,k,\text{max})}_m)_{m \geq 0}$, and let $Z^{(n,k,\text{unif})}_m$ and $Z^{(n,k,\text{max})}_m$ denote the lengths of their corresponding loop-erasures.

\begin{theorem}\label{thm:Rayleigh conv}
For any $k \geq 1$,
    \begin{align*}
\left(n^{-\frac{k}{k+1}}Z^{(n,k,\text{max})}_{\lfloor m n^{\frac{k}{k+1}}\rfloor}\right)_{m \geq 0} \overset{(d)}{\to} (R^{(k)}_t)_{t \geq 0},
    \end{align*}
     with respect to the Skorokhod-$J_1$ topology as $n \to \infty$.
\end{theorem}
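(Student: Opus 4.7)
The plan is to exploit the fact that $Z^{(n,k,\text{max})}_m$ is itself a Markov chain on $\{0, \ldots, n-1\}$ and to apply a generator-convergence theorem to the rescaled process $X^{(n)}_t := n^{-k/(k+1)} Z^{(n,k,\text{max})}_{\lfloor t n^{k/(k+1)}\rfloor}$. The first step is to observe that if $Z_m = \ell$, then with probability $1 - (\ell/(n-1))^k$ at least one of the $k$ sampled options is fresh so $Z_{m+1} = \ell + 1$; otherwise all $k$ options lie on the current loop-erased path, and the max-rule selects the option cutting off the shortest loop, giving
\begin{equation*}
\mathbb{P}(Z_{m+1} = j \mid Z_m = \ell) = \frac{(j+1)^k - j^k}{(n-1)^k}, \qquad 0 \leq j \leq \ell - 1.
\end{equation*}
Since these transitions depend on the past only through $\ell$, the chain $Z^{(n,k,\text{max})}_m$ is genuinely Markov, exactly as in the $k=1$ case exploited by Evans--Pitman--Winter.

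The candidate limit $R^{(k)}$ should have unit upward drift punctuated by downward jumps from $x$ to $xU^{1/k}$ at rate $x^k$ (with $U$ uniform on $[0,1]$), as these are the macroscopic limits of the jump rate and jump size distribution. Equivalently, the generator is
\begin{equation*}
Lf(x) = f'(x) + \int_0^x k v^{k-1}\bigl(f(v) - f(x)\bigr) dv,
\end{equation*}
which recovers the Rayleigh generator of \cite{EPW} in the case $k=1$. For $f \in C^1_c(0,\infty)$, the convergence of the discrete generator of $X^{(n)}$ to $Lf$ uniformly on compact subsets of $(0,\infty)$ is a direct computation: a Taylor expansion on the extension event, together with a Riemann sum on the jump events using $(j+1)^k - j^k \sim k j^{k-1}$ and the substitution $v = j n^{-k/(k+1)}$. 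Since $L$ generates a piecewise-deterministic Markov process with locally bounded jump rate on $(0,\infty)$, the associated martingale problem is well-posed, so identification of the limit is routine.

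The main obstacle I expect is establishing tightness in Skorokhod-$J_1$, which by a standard setup reduces to (i) compact containment $\sup_{t \leq T}X^{(n)}_t = O_\mathbb{P}(1)$ uniformly in $n$, and (ii) oscillation control via Aldous' criterion. The naive bound $Z^{(n,k,\text{max})}_m \leq m$ is too weak after rescaling, but compact containment can be established by direct moment bounds: computing $\mathbb{E}[(X^{(n)}_t)^{k+1}]$ on the Markov chain and using that the downward jump rate grows as $(\ell/n)^k$ shows that this quantity stays bounded uniformly in $n$ and $t \leq T$ (consistent with the heuristic stationary equation $\mathbb{E}[X^{k+1}] = k+1$ obtained by balancing drift against mean jump size). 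The oscillation condition then follows because between successive down-jumps $X^{(n)}$ is essentially deterministic linear growth with fluctuations of size $O(n^{-k/(k+1)})$, and the number of down-jumps on $[0, T]$ is tight by (i). Combining these with the generator convergence concludes the proof.
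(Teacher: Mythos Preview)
Your approach via generator convergence is a valid alternative, but genuinely different from the paper's. The paper instead constructs an explicit coupling to a Poisson point process in the plane, following Schweinsberg's treatment of the $k=1$ case: it defines an auxiliary discrete process $(C_m)$ directly from the PPP $\Pi_k$, shows deterministically that $|n^{-k/(k+1)}C_m - R^{(k)}(n^{-k/(k+1)}m)| \leq n^{-k/(k+1)}$, and then couples $(C_m)$ with $(Z_m)$ step by step via an elementary Bernoulli coupling lemma. This yields finite-dimensional convergence by hand, with no generator machinery or martingale-problem well-posedness to verify. Your route is more structural but trades the explicit coupling for the need to check that your test-function class is actually a core for $L$; note that the process starts at $0$ and can jump arbitrarily close to $0$, so $C^1_c(0,\infty)$ alone may be awkward at the boundary and you will likely want $C^1_b([0,\infty))$ or similar.

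One correction: the bound $Z_m \leq m+1$ is \emph{not} too weak after rescaling---it gives $X^{(n)}_t \leq t + n^{-k/(k+1)}$ deterministically, and this is exactly what the paper uses for compact containment (``part (a) is trivial since $n^{-k/(k+1)} Z_{t n^{k/(k+1)}} \leq t$ deterministically''). Your moment argument is unnecessary. For the oscillation condition the paper argues, essentially as you do, that the only obstruction is two downward jumps in a window of width $\theta$, and handles this by a direct union bound over intervals. (Minor: the paper takes $K_n$ with self-loops, so the denominator in your transition probabilities is $n^k$ rather than $(n-1)^k$, and $Z_{m+1}$ ranges over $\{1,\ldots,\ell\}$ rather than $\{0,\ldots,\ell-1\}$; neither affects the scaling limit.)
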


Here $R^{(k)}$ refers to a \textbf{generalized Rayleigh process}. We will define it precisely in Section \ref{sctn:gen Rayleigh def}, but its stationary measure satisfies
\[
\pr{X>t} = e^{-\frac{t^{k+1}}{k+1}}.
\]

The proof of Theorem \ref{thm:AB convergence} is similar to the proof of Aldous's convergence of discrete uniform trees to the CRT of \cite{AldousCRTI}, and to that of the convergence of aggregation trees in \cite{curien2017random}. In particular we first prove that the subtrees spanned by $i$ leaves converge to appropriate limiting subtrees, and then complete the argument using various tightness arguments. For the convergence of measures, this makes use of nice comparisons with urn models. The proof of the Hausdorff dimension of $\T_{k,k-1}$ also follows fairly standard arguments. Finally, Theorem \ref{thm:choice trees the same} follows by a direct calculation.

The proof of \cref{thm:Rayleigh conv} employs a representation of this process using Poisson point processes and also follows fairly straightforwardly by showing that the associated point processes converge.

We remark that throughout the paper we take the convention that the complete graph $K_n$ \textbf{has self-loops}.

\textbf{Acknowledgements.} We would like to thank Asaf Nachmias for helpful discussions. Both authors were supported by the ERC consolidator grant 101001124 (UniversalMap). EA was additionally supported by the ANR grant ProGraM (ANR-19-CE40-
0025). EA would like to thank Asaf Nachmias and MS for hosting her for a visit to Tel Aviv University during which time this project was started.

\textbf{Organization.} We start in Section \ref{sctn:preliminaries} by giving a recap of general UST constructions and explain how these can be adapted to the choice setting. In Section \ref{sctn:stick breaking defs} we consider the limiting trees $\T_{k,k-1}$ and show that they can be constructed as aggegration trees and have the claimed Hausdorff dimension. The rest of the paper concerns the choice random walks and choice trees. As a warm up, we start with the proof of \cref{thm:Rayleigh conv} in \cref{sec: lerw to rayleigh}, then treat the scaling limit of \cref{thm:AB convergence} in Section \ref{sec: AB trees}, and prove \cref{thm:choice trees the same} in \ref{sec: wilson trees}. We conclude in Section \cref{sctn:open questions} with some open problems on the model and some simulations.

\section{Preliminaries and constructions}\label{sctn:preliminaries}

\subsection{Gromov-Hausdorff-Prokhorov topology}\label{sctn:GHP def}

Here we define the GHP topology. We use the framework of \cite[Sections 1.3 and 6]{MiermontTessellations} and work in the space $\XXbb_c$ of equivalence classes of metric measure spaces (mm-spaces) $(X,d,\mu)$ such that $(X,d)$ is a compact metric space and $\mu$ is a Borel probability measure on it, and we say that $(X,d,\mu)$ and $(X',d',\mu')$ are equivalent if there exists a bijective isometry $\phi: X \to X'$ such that $\phi_* \mu = \mu'$ (here $\phi_*\mu$ is the pushforward measure of $\mu$ under $\phi$). To ease notation, we will represent an equivalence class in $\XXbb_c$ by a single element of that equivalence class. 
	
First recall that if $(X, d)$ is a metric space, the {\bf Hausdorff distance} $d_H$ between two sets $A, A' \subset X$ is defined as
	\[
	d_H(A, A') = \max \{ \sup_{a \in A} d(a, A'), \sup_{a' \in A'} d(a', A) \}.
	\]
For $\epsilon>0$ and $A\subset X$ we also let $A^{\epsilon} = \{ x \in X: d(x,A) < \epsilon \}$ be the $\epsilon$-fattening of $A$ in $X$. If $\mu$ and $\nu$ are two measures on $X$, the {\bf Prokhorov distance} between them is given by
	\[
	d_P(\mu, \nu) = \inf \{ \epsilon > 0: \mu(A) \leq \nu(A^{\epsilon}) + \epsilon \text{ and } \nu(A) \leq \mu(A^{\epsilon}) + \epsilon \text{ for any closed set } A \subset X \}.
	\]
	
	\begin{definition} \label{def:GHP} Let $(X,d,\mu)$ and $(X',d',\mu')$ be elements of $\XXbb_c$. The \textbf{Gromov-Hausdorff-Prokhorov} {\rm (GHP)} distance between $(X,d,\mu)$ and $(X',d',\mu')$ is defined as
		\[
		d_{\text{GHP}}((X,d,\mu),(X',d',\mu')) = \inf \left\{d_H (\phi(X), \phi'(X')) \vee d_P(\phi_* \mu, \phi_*' \mu') \right\},
		\]
		where the infimum is taken over all isometric embeddings $\phi: X \rightarrow F$, $\phi': X' \rightarrow F$ into some common metric space $F$.
	\end{definition}

\subsection{Skorokhod-$J_1$ topology}\label{sctn:Skor J1 def}

In this section we briefly introduce the Skorokhod-$J_1$ topology, first defined in \cite{Skorohod}, and used to give a notion of convergence for c\`adl\`ag functions that are not continuous.

The Skorokhod-$J_1$ distance function is defined as follows. First let 
\[
\Lambda = \{ \lambda:[0,1] \rightarrow [0,1] : \lambda(0)=0, \lambda(1)=1, \lambda \text{ a homeomorphism} \}.
\]
The Skorokhod-$J_1$ distance is then defined as
\begin{equation}\label{eqn:skor def}
d_{J_1}(f,g) = \inf_{\lambda \in \Lambda} \{ ||f\cdot \lambda - g ||_{\infty} + ||\lambda - I ||_{\infty}\}.
\end{equation}

By \cite[Theorem 13.1]{billingsley2013convergence}, we can prove Skorokhod convergence by first showing that the sequence of processes is tight, and then showing that the finite dimensional marginals converge for all $t\in [0,\infty)$.

For tightness, we will use the standard criteria in the Skorokhod-$J_1$ topology given in \cite[Theorem 13.2]{billingsley2013convergence}. If $X=(X_t)_{t \geq 0}$ is a \cadlag function, we define 
\[
||X||_{\infty} = \sup_{t \geq 0} |X_t|, \qquad \omega_\delta '(X) := \inf_{\{t_i\}_{i=0}^m} \sup_{1 \leq i \leq m} \sup_{s, t \in [t_{i-1}t_i)} |X_t - X_s|,
\]
where the infimum is taken over all partitions $0=t_0 < t_1 < \ldots < t_m$ with $t_i - t_{i-1} > \delta$ for all $i$. For $K\geq 0$, we also let $X^K$ denote the function defined by 
\[
X^K_t = X_t \mathbbm{1}\{t \leq K\}.
\]

We will use the following result.

\begin{proposition}\cite[Theorem 13.2]{billingsley2013convergence}.\label{prop:billingsley tightness criteria}
    Let $(X^{(n)})_{n \geq 0}$ be a sequence of \cadlag functions $[0, \infty) \to \RR$ defined on a probability space $(\Omega, \mathcal{F}, \Pb)$. For $K>0$ set $X^{(n,K)} = (X^{(n)})^K$. Then the sequence is tight if and only if for every $K>0$, the following conditions are satisfied.
    \begin{enumerate}[(a)]
        \item $\lim_{C \to \infty} \limsup_{n \to \infty} \pr{||X^{(n,K)}||_{\infty} \geq C} = 0$.
        \item For all $\eps>0$, $\lim_{\delta \downarrow 0} \limsup_{n \to \infty} \pr{ \omega_{\delta}'(X^{(n,K)}) > \eps} = 0$.
    \end{enumerate}
\end{proposition}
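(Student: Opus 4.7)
The plan is to deduce this criterion from Prokhorov's theorem combined with an Arzelà--Ascoli-type characterization of relatively compact subsets of the Skorokhod space $D[0,K]$. Since the topology on $D[0,\infty)$ is Polish and is metrized by combining the $J_1$-distances on $D[0,K]$ for $K>0$ with suitable weights, tightness on $D[0,\infty)$ is equivalent to tightness of the truncated processes $X^{(n,K)}$ on $D[0,K]$ for each $K>0$. This reduction is what lets us fix $K$ throughout the argument.

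Next I would invoke Prokhorov's theorem, which says that the sequence of laws of $X^{(n,K)}$ is tight iff for every $\eps>0$ there is a compact $A_\eps\subset D[0,K]$ with $\pr{X^{(n,K)}\notin A_\eps}<\eps$ for all $n$. To turn this into the explicit criteria (a) and (b) one needs a workable description of compact sets in $D[0,K]$. The key structural lemma, which must be established separately, states that a set $A\subset D[0,K]$ has compact closure iff $\sup_{x\in A}\|x\|_\infty<\infty$ and $\lim_{\delta\downarrow 0}\sup_{x\in A}\omega'_\delta(x)=0$. Granted this, the sets
\[
K^\delta_{C,\eta}=\{x\in D[0,K]:\|x\|_\infty\leq C,\ \omega'_\delta(x)\leq \eta\}
\]
are closed and relatively compact, and conditions (a) and (b) translate directly into the statement that one can pick $C$ large and $\delta$ small so that $X^{(n,K)}\in K^\delta_{C,\eta}$ with probability at least $1-\eps$ uniformly in $n$; intersecting finitely many such sets (one per $\eta=1/\ell$) yields the required compact $A_\eps$. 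The reverse implication, deducing (a) and (b) from tightness, is obtained by extracting a single tight family $\{A_\eps\}$ and using the uniform bounds on $\|\cdot\|_\infty$ and $\omega'_\delta$ inherited from any such compact set.

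The main technical obstacle is the Arzelà--Ascoli-type characterization of relative compactness in $D[0,K]$ via $\omega'_\delta$. Necessity is routine: continuous functionals (such as $\|\cdot\|_\infty$ and $\omega'_\delta$) are uniformly continuous on a compact set, which translates via the $J_1$-metric into the stated uniform control. Sufficiency is subtler and is the real workhorse: given a sequence in $A$ satisfying both bounds, one approximates each function in $J_1$ by a step function adapted to a partition of mesh at most $\delta$ on which the oscillation is at most $\omega'_\delta(x)+\eta$, extracts a convergent subsequence of these step-function approximants using ordinary compactness in $\mathbb{R}^{\lceil K/\delta\rceil}$, and reassembles the limit by a delicate time-change argument, letting $\delta\downarrow 0$ along a diagonal subsequence. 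This step is Billingsley's Theorem 12.3; once it is in place, the deduction of the present proposition reduces to bookkeeping with Prokhorov's theorem.
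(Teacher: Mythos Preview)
The paper does not prove this proposition at all: it is simply quoted as \cite[Theorem 13.2]{billingsley2013convergence} and used as a black box. Your outline is a correct sketch of the standard proof (Prokhorov's theorem together with the Arzel\`a--Ascoli characterization of relatively compact sets in $D[0,K]$, i.e.\ Billingsley's Theorem 12.3), so there is nothing to compare against and no gap to flag.
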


\subsection{Constructions of uniform spanning trees}

The uniform spanning tree of a finite graph $G$ (henceforth denoted $\UST (G)$) is a sample chosen uniformly at random from the set of spanning trees of $G$. We start by briefly recapping some well-known constructions of USTs from loop-erased random walks, and some of their associated properties.
	
\subsubsection{Loop-erased random walk}\label{sctn:LERW}
Wilson's algorithm \cite{Wil96}, which we now describe, is a widely used algorithm for sampling $\UST$s.
A {\bf walk} $X=(X_0, \ldots X_L)$ of length $L\in\mathbb{N}$ is a sequence of vertices where $(X_i, X_{i+1})\in E(G)$ for every $0 \leq i \leq L-1$. For an interval $J=[a,b]\subset[0,L]$ where $a,b$ are integers, we write $X[J]$ for $\{X_i\}_{i=a}^{b}$. Given a walk, we define its {\bf loop erasure} $Y = \LE(X) = \LE(X[0,L])$ inductively as follows. We set $Y_0 = X_0$ and let $\lambda_0 = 0$. Then, for every $i\geq 1$, we set $\lambda_i = 1+\max\{t \mid X_t = Y_{\lambda_{i-1}}\}$. If $\lambda_i \leq L$ we set $Y_i = X_{\lambda_i}$ and otherwise, we halt the process and set $\LE(X)=\{Y_k\}_	{k=0}^{i-1}$. The times $\left\{ \lambda_k(X)\right\}_{k=0}^{|\LE(X)| - 1}$ are the {\bf times contributing to the loop-erasure} of the walk $X$. When $X$ is a random walk starting at some vertex $v\in G$ and terminated when hitting a set of vertices $W$ ($L$ is now random), we say that $\LE(X)$ is the {\bf loop-erased random walk} ($\LERW$) from $v$ to $W$. Note that $\LE(X)$ is obtained by erasing the loops from $X$ in chronological order as they appear.

\subsubsection{Rayleigh process}\label{sctn:standard Rayleigh def}

In \cite{EPW}, the authors describe the \textit{Rayleigh process} as a process $(R_t)_{t\geq 0}$ with jumps that has the following dynamics. $(R_t)_{t\geq 0}$ grows deterministically with unit speed in between jumps, which occur with density $R_{s-}ds$ (where $R_{s-}$ is defined as the left limit of $R_{t}$ evaluated at $s$). When such a jump occurs, the process jumps to a uniform point in $[0,R_{t-}]$, and again grows at unit speed. 
More formally, this process can be constructed by sampling a homogeneous Poisson Point Process $\Pi$ on $\RR^+ \times \RR^+$ with Lebesgue intensity, and setting for each $t \geq 0$
\begin{equation*}
	R_t := \min\{t, \inf\{x+(t-s): (s,x) \in \Pi, s\leq t\}\}.
\end{equation*}
	This Rayleigh process is the Skorohod topology scaling limit of the process describing the length of a loop erased random walk on $K_n$ (see \cite{EPW,Schweinsberg}); more precisely, \eqref{eqn:LERW Rayeigh intro} holds.

\begin{figure}[!h]
\centering
\includegraphics[width=15cm]{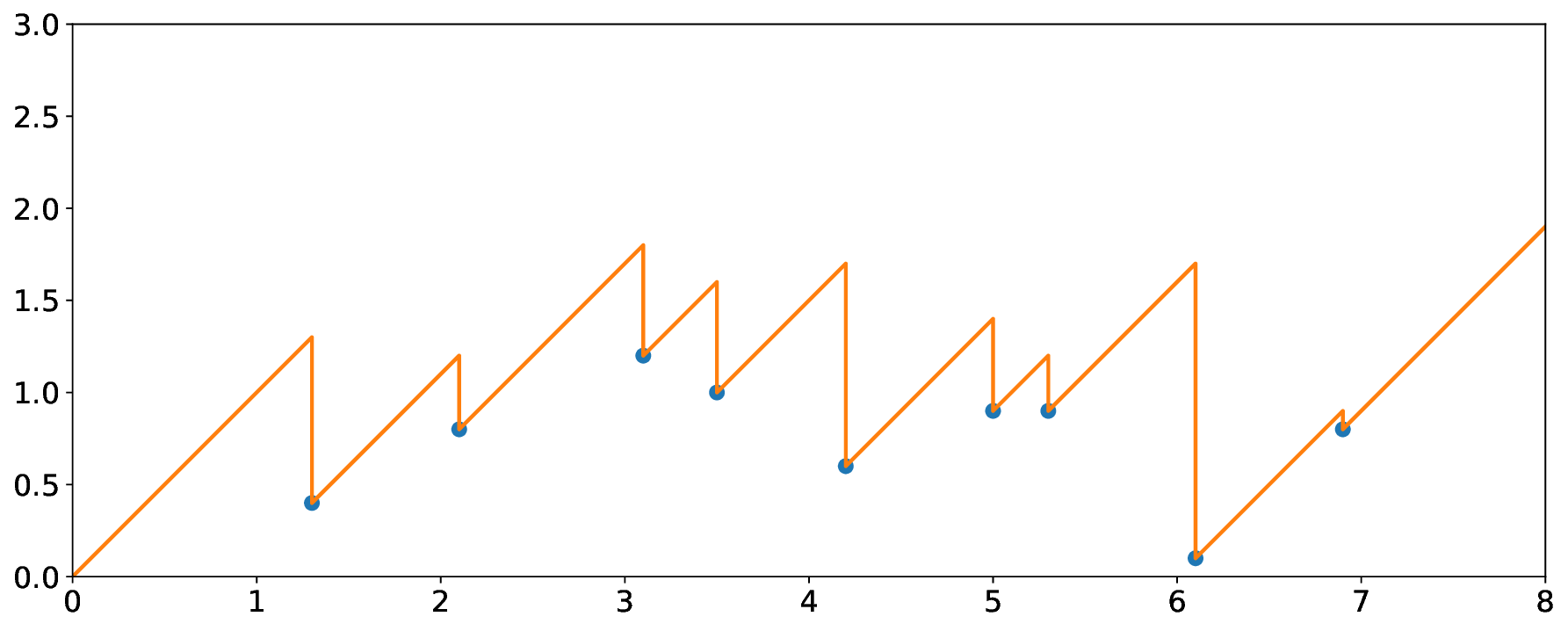}

\caption{Rayleigh process}\label{fig:Rayleigh standard}
\end{figure}

\subsubsection{Wilson's algorithm}
To sample a $\UST$ of a finite connected graph $G$ we begin by fixing an ordering of the vertices of $V=(v_1,\ldots, v_n)$. At the first step, let $T_0$ be the tree containing $v_1$ and no edges. At each step $i>1$, sample a $\LERW$ from $v_i$ to $T_{i-1}$ and set $T_i$ to be the union of $T_{i-1}$ and the $\LERW$ that has just been sampled. We terminate this algorithm with $T_n$. Wilson \cite{Wil96} proved that $T_n$ is distributed as $\UST(G)$, no matter the initial ordering of the vertices.

\subsubsection{Laplacian random walk}\label{sctn:Laplacian RW}
Here we outline the Laplacian random walk representation of the LERW (see \cite[Section 4.1]{LyonsPeres} for full details) and its application to Wilson's algorithm. Take a finite, weighted, connected graph $G$ and suppose we have sampled $T_j$ for some $j \geq 1$ using Wilson's algorithm as described above. We now sample a LERW from $v_{j+1}$ to $T_j$. Denote this LERW by $(Y_m)_{m \geq 0}$. Also let $X$ denote a random walk on $G$. For a set $A \subset G$, let $\tau_A$ denote the hitting time of $A$ by $X$, and $\tau_A^+$ denote the first return time to $A$ by $X$. The Laplacian random walk representation of $Y$ says that, conditionally on $T_j$ and on the event $\{(Y_m)_{m=0}^{i} \cap T_j = \emptyset\}$, we have for any $i \geq 0$ that 
\begin{align}\label{eqn:Laplacian RW classical formula}
\prcond{ Y_{i+1} = v}{(Y_m)_{m=0}^{i}}{} = \prcond{ X_{1} = v}{\tau_{T_j} < \tau^+_{\cup_{m=0}^{i} \{Y_m\}}}{Y_i} = \frac{\prstart{ X_{1} = v}{Y_i} \prstart{\tau_{T_j} < \tau_{\cup_{m=0}^{i} \{Y_m\}}}{v}}{\prstart{\tau_{T_j} < \tau^+_{\cup_{m=0}^{i} \{Y_m\}}}{Y_i}}. 
\end{align}
Clearly this is only non-zero when $v \notin \bigcup_{m=0}^{i} \{Y_m\}$.

\subsubsection{\AB algorithm}

The \AB algorithm was introduced separately by Aldous \cite{aldous1990random} and Broder \cite{broder1989generating} (both also acknowledge Persi Diaconis for useful discussions) as a method to sample USTs using random walks. It goes as follows. Start a random walk $X$ at any vertex of $G$ and run it until its cover time $\tau_{\text{cov}}$ (that is, the first time it has visited every vertex of $G$). Then let $T^{\text{AB}}(G)$ denote the set of first entrance edges to each vertex; that is
\begin{equation}\label{eqn:AB def}
T^{\text{AB}}(G)=\{(X_n, X_{n+1}): 1 \leq n < \tau_{\text{cov}} \text{ and } \nexists k \leq n \text{ such that } X_k = X_{n+1}\}.
\end{equation}
The remarkable fact established by Aldous and Broder is that the resulting tree $T$ has the distribution of $\UST(G)$.

We note the following: the \AB algorithm also constructs the UST ``branch-by-branch" in the sense that we can define a sequence of stopping times by $\sigma_0=0$ and, for $i \geq 1$,
\[
\sigma_{i+1}= \inf \{n > \sigma_i: \exists k \leq n \text{ such that } X_k = X_{n+1}\}.
\]
Note that the times $\tau_n$ correspond to the steps of the random walk $(X_n, X_{n+1})$ that are \textit{not} first entrance edges and therefore the times $(\sigma_i)_{i \geq 0}$ each correspond to an end of a branch in the resulting spanning tree.

For each $i \geq 0$, we can therefore define the partial spanning tree $\TAB_i = \TAB_i(G)$ by
\[
\TAB_i = \{(X_n, X_{n+1}): 1 \leq n < \sigma_i \text{ and } \nexists k \leq n \text{ such that } X_k = X_{n+1}\}.
\]
Note that the partial spanning trees form an increasing sequence and $\TAB_{i+1}$ differs from $\TAB_i$ by the addition of a single branch.

An interesting result of Hu, Lyons and Tang \cite{HuReverse} says that rather than taking the set of first entry edges of the random walk, we can instead take the set of last exit edges, and the corresponding subtree also has the law of $\UST(G)$.

\subsection{Constructions of choice spanning trees}
In this section we define two version of choice spanning trees. These are again sampled using a similar procedure as in the \AB algorithm and Wilson's algorithm, but we replace the random walk $X$ used in these procedures with a different stochastic process: more specifically the choice random walk that we will define in the next section.

\subsubsection{Choice random walk}\label{sctn: crw}

A $k$-choice random walk $(X_n)_{n=1}$ on a graph $G$ is a random walk in which, at time $n$, we sample $k$ random choices for the next step of the random walk and choose the one amongst them that optimizes our trajectory according to some strategy. See \cite{georgakopoulos2020choice}, \cite{georgakopoulos2022power} and references therein. In our case, our strategy is simply that we wish to avoid a portion of our past (in this paper, this portion will either be the entire path up until time $n-1$, or its loop-erasure), in additional to a pre-specified terminating set. More formally, we can define two variants (maximal and uniform) of this random walk formally as follows. First set $X_0=x$ for some $x \in G$, and choose a \textbf{terminating set} $\Ter \subset G$ (the walk will in fact be killed on hitting $\Ter$). Moreover we assume that $\Ter$ comes equipped with a \textit{partial order} $\preceq$. Then, given the trajectory $(X_0, X_1, \ldots, X_n)$ and a set $\Av_n \subset \{X_0, X_1, \ldots, X_n\}$, we define $X_{n+1}$ according to the following rules, in order of priority:
\begin{enumerate}
    \item Avoid the terminating set whenever possible.
    \item Avoid $\Av_n$ whenever possible, and if forced to hit $\Av_n$, hit it at the point which was either hit last by the random walk $X$ (\textbf{maximal} choice), or otherwise choose uniformly from the options in $\Av_n$ (\textbf{uniform} choice).
\end{enumerate}
(So if we have to choose between $\Ter$ and $\Av_n$, we prefer to hit $\Av_n$. $\Ter$ is also permitted to be empty.)

More formally, first sample $k$ independent vertices amongst the neighbours of $X_n$ and denote these $(Y^{(i)}_n)_{i=1}^k$. Let 
\begin{equation}\label{eqn:taun def}
\tau_n = \inf \{i \leq k: Y^{(i)}_n \notin \Ter \cup \Av_n\}, \qquad \tau_n (\Ter) = \inf \{i \leq k: Y^{(i)}_n \notin \Ter \},
\end{equation}
where we interpret the infimum of the empty set as infinity. On the event $\tau_n = \infty$, set 
\begin{align*}
N_n &=
     \arg \max \{i:\exists m \leq k \text{ such that } Y^{(m)}_n = X_i \text{ and } X_i \in \Av_n \text{ and } X_i \neq X_j \text{ for all } j>i\}.
\end{align*}
Note that $N_n$ is defined so that it corresponds to the choice of $Y_n^{(m)}$ that intersects the past and, among the possible choices, is the vertex that first appeared in the past at the latest time.

We define the \textbf{maximal choice} random walk by setting 
\begin{equation}\label{def:choice RW min}
X_{n+1} = \begin{cases} Y^{(\tau_n)}_n \text{ if }& \tau_n < \infty, \\
Y^{(N_n)}_n \text{ if }& \tau_n = \infty, \tau_n (\Ter) < \infty \\
\sup_{i\leq k} Y^{(i)}_n \text{ if }& \tau_n (\Ter) = \infty
\end{cases}
\end{equation}
(here the supremum in the final line is taken according to the partial order $\preceq$).

Also, we define the \textbf{uniform choice} random walk by setting $Z_0=x$ and 
\begin{equation}\label{def:choice RW uniform}
Z_{n+1} = \begin{cases} Y^{(\tau_n)}_n \text{ if }& \tau_n < \infty, \\
Y^{(\tau_n (\Ter))}_n \text{ if }& \tau_n = \infty, \tau_n (\Ter) < \infty \\
Y^{(1)}_n \text{ if }& \tau_n (\Ter) = \infty.
\end{cases}
\end{equation}

(Note its definition also depends on the rule for forming the sets $\Ter$ and $\Av_n$.) Note that all of these random walks share the property that they avoid making loops intersecting $\Av_n$ whenever possible, but in the case when they are forced to make loops, they choose a loop according to a different rule (respectively in order to maximize the length of the resulting trajectory, or to essentially choose uniformly from the options). We will continue to use the notation $X$ and $Z$ to distinguish between these two cases throughout the paper.

Note that these processes are no longer Markov. However that, since the loop-erasure defined in Section \ref{sctn:LERW} is a deterministic function of a path, it makes perfect sense to consider the loop-erasure of a choice random walk run up until any fixed time $n$, and therefore we can similarly define Wilson's algorithm and the \AB algorithm using these choice random walks. We do this in subsequent subsections.

\subsubsection{Generalized Rayleigh process}\label{sctn:gen Rayleigh def}

 
	Emulating the construction of Section \ref{sctn:standard Rayleigh def}, for every $k\geq 1$ we define the $k$-Rayleigh process (where the $1$-Rayleigh process is the process described in \cite{EPW}) as follows.
	
	Fix $k\geq 1$ and let $\Pi_k$ be a Poisson Point Process on $\RR^+ \times \RR^+$ with intensity measure $ky^{k-1}$. Then, define the $k$-Rayleigh process $(R_t^k)_{t\geq 0}$ by
	\begin{equation*}
		R_t^k := \min\{t, \inf\{x+(t-s): (s,x) \in \Pi, s\leq t\}\}.
	\end{equation*}
	In \cref{sec: lerw to rayleigh}, we will show that the process describing the length of the loop erasure of a $k$-choice random walk with the maximal choice law has the $k$-Rayleigh process as its scaling limit.

\begin{figure}[h]
\centering
\includegraphics[width=1\textwidth]{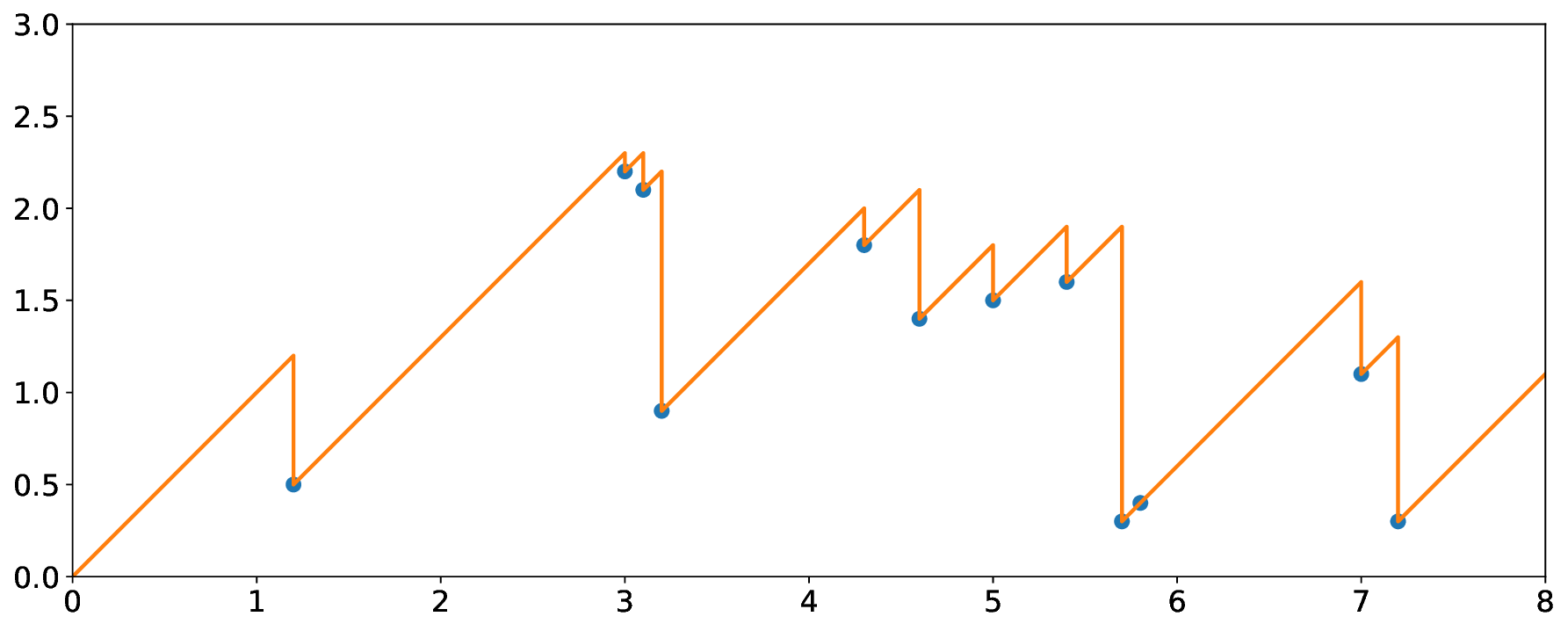}

\caption{$2$-Rayleigh process}\label{fig:Rayleigh choice}

\includegraphics[width=1\textwidth]{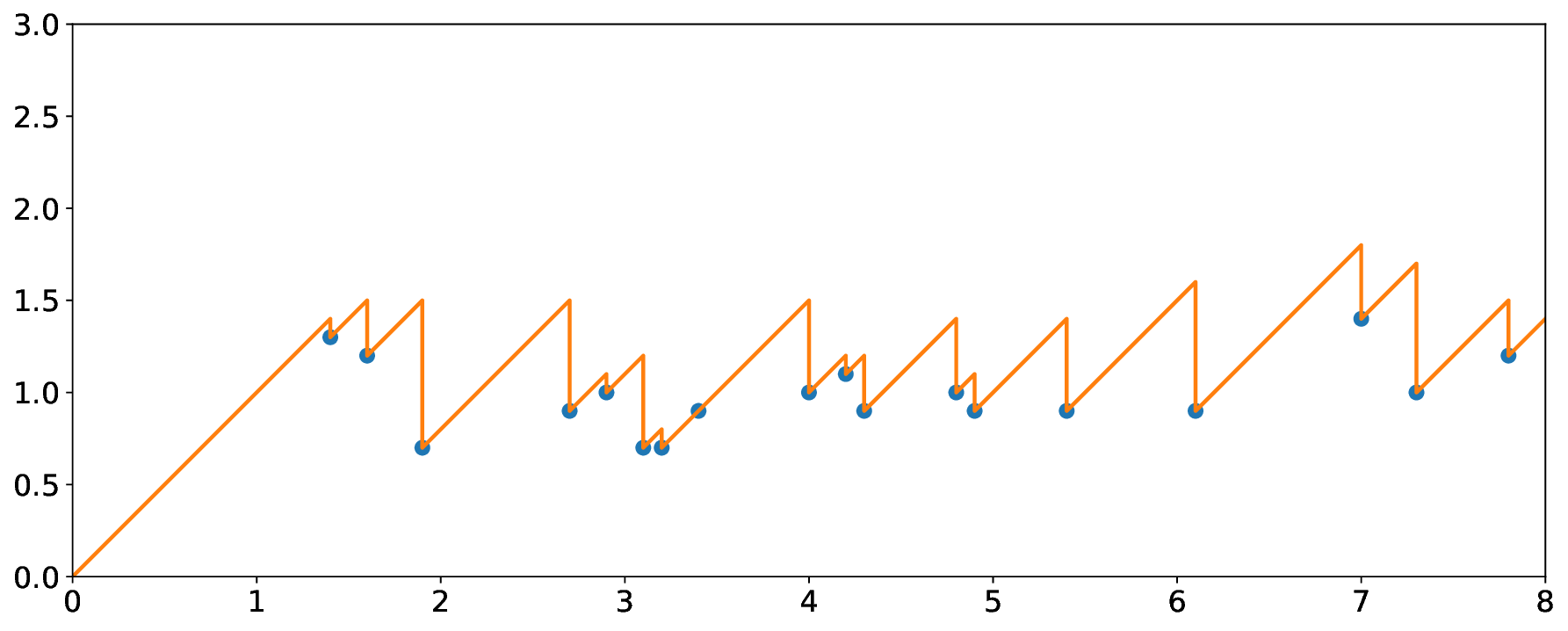}

\caption{$5$-Rayleigh process}\label{fig:Rayleigh choice}
\end{figure}

\subsubsection{Wilson's algorithm with choice}\label{sec: Wilson algorithm trees}

We define a maximal or uniform choice spanning tree using a variant of Wilson's algorithm on the complete graph as follows. Again, we begin by fixing $k \geq 1$ and an ordering of the vertices of $V=(v_1,\ldots, v_n)$. At the first step, let $T_0$ be the tree containing $v_1$ and no edges. Furthermore, say that $v_1$ has \textbf{time stamp} $(1,1)$, meaning that it is the first vertex added at the first step. At each step $i>1$, we again add another branch of the tree but this time we will also define a time stamp for each vertex in such a way that a time stamp of $(i,j)$ means that a vertex is the $j^{th}$ vertex appearing in the loop-erased process at step $i$ of Wilson's algorithm. Moreover, we can define an order on time stamps as follows: we say that 
\[
(i,j) \preceq (\ell,m) \text{ if and only if } i < \ell, \text{ or } i=\ell \text{ and } j \geq m.
\]
Note the slightly unexpected relation $j \geq m$ above (the reader may have expected $j \leq m$); this is to deal with the fact that branches in Wilson's algorithm are essentially added in reverse.


\textbf{Maximal choice Wilson spanning tree.} Conditionally on $T_i$ and the time stamps of all the vertices appearing in $T_i$, we sample a \textit{maximal} choice random walk (we emphasise that we \textit{always} use the maximal $k$-choice random walk to sample the branches, even when constructing the uniform choice spanning tree), starting from $X_0=v_{i+1}$, with $\Ter = T_i$, and $\Av_n = \{v: v \in \LE (X_0, \ldots, X_n)\}$, and the \textit{partial order of the time stamps}. We terminate the random walk when it hits $\Ter$. We have thus sampled a random path from $v_{i+1}$ to $T_i$; the tree $T_{i+1}$ is then defined as the union of the loop-erasure of this path, with $T_i$.

\textbf{Uniform choice Wilson spanning tree.} Conditionally on $T_i$ and the time stamps of all the vertices appearing in $T_i$, we sample a \textit{maximal} $k$-choice random walk (we emphasise that we \textit{always} use the maximal choice random walk to sample the branches, even when constructing the uniform choice spanning tree), starting from $X_0=v_{i+1}$, with $\Ter = T_i$, and $\Av_n = \{v: v \in \LE (X_0, \ldots, X_n)\}$, and a \textit{uniform partial order} on the vertices of $T_i$. This uniform partial order is resampled independently for each $i$ (i.e. each time we add a branch). We terminate the random walk when it hits $\Ter$. We have thus sampled a random path from $v_{i+1}$ to $T_i$; the tree $T_{i+1}$ is then defined as the union of the loop-erasure of this path, with $T_i$.


In both cases, we terminate the algorithm once $T_i$ spans the entire graph.  The final tree is called the maximal or uniform \textbf{$k$-choice Wilson spanning tree}, denoted $\TWilk_{\text{max}}(G)$ and $\TWilk_{\text{unif}}(G)$ respectively when the underlying graph is $G$.

We will in fact only consider this algorithm when $G$ is the complete graph for the following reason. A remarkable property of Wilson's original algorithm for uniform spanning trees is that the law of the final tree obtained does not depend on the ordering of the vertices used to initiate the algorithm. We do not expect this to be tree for choice spanning trees. However, on the complete graph the initial ordering clearly doesn't matter since everything is completely symmetric.

\subsubsection{\AB algorithm with choice}\label{sctn:AB alg def}
Given the definitions in \eqref{def:choice RW min} and \eqref{def:choice RW uniform}, it seems clear that the way to generalize the \AB algorithm on a graph $G$ is just to replace $X$ in \eqref{eqn:AB def} with either $X$ or $Z$ from \eqref{def:choice RW min} or \eqref{def:choice RW uniform} respectively. This is exactly what we do: we run the relevant choice random walk until its cover time, and then let $\TABk(G)$ be the collection of first entry edges of the choice random walk. We call the resulting structure the maximal or uniform \textbf{$k$-choice \AB spanning tree} of $G$, denoted $\TABk_{\text{max}}(G)$ and $\TABk_{\text{unif}}(G)$.

Note that, similarly to the classical \AB algorithm, we can define a sequence of stopping times by $\sigma_0=0$, and for each $i \geq 1$,
\begin{equation}\label{eqn:sigma i def}
\sigma_{i+1}= \inf \{m > \sigma_i: \tau_m = \infty\},
\end{equation}
(recall the definition of $\tau_m$ from \eqref{eqn:taun def}).
Again for each $i \geq 0$ we can use this to define the partial spanning tree
\begin{equation}\label{eqn:AB partial spanning tree}
\TABk_i = \{(X_m, X_{m+1}): 1 \leq m < \sigma_i \text{ and } \nexists k \leq m \text{ such that } X_k = X_{m+1}\}.
\end{equation}
Once again, the partial spanning trees form an increasing sequence and $\TABk_{i+1}$ differs from $\TABk_i$ by the addition of a single branch.

\subsection{Stick-breaking constructions of random trees}\label{sctn:stick breaking defs}

We start with a general description of how one can construct a sequence of deterministic trees from sticks on the real line.

\begin{definition}(Stick-breaking construction of a tree sequence). \label{def:stick breaking}
Set $y_0=z_0=0$, and suppose that we have a sequence of points $y_1, y_2, \ldots \in [0, \infty)$ and $z_1, z_2, \ldots \in [0, \infty)$ such that $y_{i-1} < y_{i}$ and $z_i \leq y_{i}$ for all $i \geq 1$. Construct trees as follows. Start by taking the line segment $[y_0, y_1]$ at time $1$. This is $T^{(1)}$ (as it contains one branch). We proceed inductively. At time $i \geq 2$, take the interval $(y_{i-1}, y_{i}]$ and attach the base of the interval $(y_{i-1}, y_{i}]
$ to the point on $T^{(i-1)}$ corresponding to $z_{i-1}$. This gives a new tree with $i$ branches, which we call $T^{(i)}$.

Given two such sequences and any $k \geq 2$ we define $\SBk ((y_0, y_1, y_2, \ldots), (z_0, z_1, z_2, \ldots ))$ or equivalently $\SBk ((y_0, y_1, y_2, \ldots, y_{j}), (z_0, z_1, z_2, \ldots, z_{j-1} ))$ to be equal to the tree $T^{(j)}$.
\end{definition}

In this paper, we will be interested in random trees built according to the following rules.

\begin{definition}\label{def:random stick breaking}
Take two parameters $\beta >0$ and $\gamma \in (0, \infty)$. Set $Y_0=Z_0=0$, let $(Y_1, Y_2, \ldots)$ denote the ordered set of points of a non-homogeneous Poisson process on $[0, \infty)$ with intensity $t^{\beta} \ dt$, and, conditionally on $Y_i$, let $Z_i$ be chosen on the interval $[0,Y_{i})$ according to the density $f^{\gamma}_{Y_i}(u) = \frac{(\gamma + 1)u^{\gamma}}{Y_{i}^{\gamma+1}}$ for each $i \geq 1$. Construct the sequence $(T^{(j)})_{j=1}^{\infty}$ as in Definition \ref{def:stick breaking}. Set $\Tbg$ to be the closure of the limit of $T^{(j)}$ in $\ell_1$ (when the limit exists - the embedding into $\ell^1$ is explained below).
\end{definition}

This law was first considered in the case $\beta=1$ and $\gamma=0$ by Aldous \cite[Section 4 and Process 3]{AldousCRTI} who showed that the limit is almost surely compact with Hausdorff dimension $2$, and moreover used this as the definition of the \textit{Brownian continuum random tree} (CRT). The model for general $\beta>0$ (but still $\gamma=0$) was later considered by Curien and Haas who showed in this case that the limiting tree is again compact and has Hausdorff dimension equal to $\frac{\beta +1}{\beta}$.

We can construct the sequence of partial trees $T^{(i)}$ as subspaces of $\ell_1$ as follows.  First sample the sequence $(y_1, y_2, \ldots)$ as in Definition \ref{def:random stick breaking}. The tree $T^{(0)}$ is equal to the point $(0,0,\ldots)$ and the tree $T^{(1)}$ is defined by
\[
T^{(1)} = (x,0,0,\ldots):x \leq y_1\}.
\] 
We then continue so that the $j^{th}$ branch $B_j$ (corresponding to a stick of length $y_j-y_{j-1}$) corresponds to the $j^{th}$ co-ordinate direction. To specify the embedding of a point $x \in T^{(i)}$ in $\ell_1$, take the path $P(x)$ from the root (labelled $0$) to $x$ in $T^{(i)}$, and let $x_j$ denote the length of the part of the path that intersects the $j^{th}$ branch (i.e. the length of the interval $B_j \cup P(x)$, measured according to Lebesgue measure; note that $x_j$ may be $0$). Then $x$ is mapped to $(x_1, x_2, \ldots)$. This embedding is clearly consistent in the sense that if $x \in T^{(i)}$, its embedding is the same in $T^{(I)}$ for all $I \geq i$, so this allows us to embed and then take closures in $\ell_1$.


We also define the projection $\rho : [0, \infty) \to \ell_1 $ in the natural way so that $\rho (t)$ denotes the image of $t$ under the stick-breaking construction of $\Tbg$. In particular, we can define this inductively by setting $\rho (0)=0$ and then if $t \in (y_{i-1}, y_{i}]$ we let $u=(u_1, u_2, \ldots, u_{i-1}, 0, 0, \ldots)$ be $\rho (z_{i-1})$, and set
\[
\rho (t) = (u_1, u_2, \ldots, u_{i-1}, t-y_i,  0, 0, \ldots)
\]
In other words, $\rho(t)$ is on the branch added to $T^{(i-1)}$ at step $i$, and is at distance $t-y_{i-1}$ from the grafting location. We also define the notion of the subtree formed at time $t$, generalizing slightly the definition in \eqref{eqn:AB partial spanning tree}. In particular, for each $t \geq 0$ we set 
\begin{equation*}\label{eqn:AB partial spanning tree time t}
\Tbg (t) = \rho ([0,t])
\end{equation*}
(so that $T^{(i)} = \Tbg (y_i)$).

This embedding also leads to the notion of descendance in $\Tbg$: we say that $x$ is a \textbf{descendant} of $y$ if $x$ if the representative of $x$ in $\ell_1$ is the concatenation of that of $y$ and another vector.

The following proposition will be useful for the comparison with spanning trees later on. It can be verified by a direct computation.

\begin{proposition}\label{prop:CRT stick probs}
Define the sequence $(Y_1, Y_2, \ldots)$ as in Definition \ref{def:random stick breaking}. Then for any $k \geq 1$ and any $x \geq 0$,
\[
\prcond{Y_{k+1} - Y_k \geq x}{(Y_i)_{i=0}^k}{} = \exp\left\{-\frac{1}{\beta+1} \left((Y_k+x)^{\beta+1} - Y_k^{\beta+1}\right)\right\}.
\]
\end{proposition}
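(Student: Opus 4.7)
The plan is to reduce the statement to the standard "memoryless" property of non-homogeneous Poisson processes, which says that conditional on the restriction of a Poisson process $\Pi$ to an initial segment $[0,s]$, the restriction to $(s, \infty)$ is an independent Poisson process with the same intensity on $(s, \infty)$. Since $(Y_i)_{i\geq 1}$ are simply the ordered atoms of $\Pi$ and $\{Y_k \leq s\} \cap \{Y_{k+1} > s\}$ is measurable with respect to $\Pi|_{[0,s]}$ (for each deterministic $s$), a standard conditioning/disintegration argument upgrades this to conditioning on $(Y_i)_{i=0}^k$.

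Given this reduction, I would argue as follows. Condition on $(Y_i)_{i=0}^k$ and observe that the event $\{Y_{k+1} - Y_k \geq x\}$ is precisely the event that $\Pi$ has no atoms in the interval $(Y_k, Y_k + x]$. By the conditional Poisson property above, the number of atoms of $\Pi$ in $(Y_k, Y_k+x]$ is, given $(Y_i)_{i=0}^k$, Poisson distributed with parameter
\[
\int_{Y_k}^{Y_k + x} t^{\beta} \, dt = \frac{1}{\beta+1}\left((Y_k+x)^{\beta+1} - Y_k^{\beta+1}\right).
\]
Hence the probability that this count is zero equals the claimed exponential, proving the proposition.

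There is essentially no obstacle: the only mildly delicate point is justifying the conditioning on the random quantity $Y_k$ rather than on a deterministic time $s$. This is handled by noting that $(Y_1,\ldots,Y_k)$ is a stopping-time-like object for the filtration generated by $\Pi$ restricted to $[0,\cdot]$, or alternatively by integrating the deterministic-$s$ identity against the joint density of $(Y_1,\ldots,Y_k)$. Either route is routine, so the proof is essentially a one-line computation.
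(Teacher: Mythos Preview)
Your proposal is correct and matches the paper's approach: the paper simply states that the proposition ``can be verified by a direct computation'' without giving further details, and your argument via the void-probability of the Poisson process on $(Y_k,Y_k+x]$ is precisely that direct computation.
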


We will need the following result, which we will prove in Section \ref{sctn:SB convergence}.

\begin{proposition}
For all $\beta > 0$ and $0 \leq \gamma \leq \beta - 1$, the limit $\Tbg$ almost surely exists and is compact. Moreover, there exists a measure $\mu$ on $\Tbg$ such that $(T^{(i)}, d_i, \mu_i) \to (\Tbg, d, \mu)$ almost surely with respect to the GHP topology, where $d_i$ and $d$ respectively denote the $\ell^1$ distance on the embedding of $T^{(i)}$ and $\Tbg$, and $\mu_i$ is the measure on $T^{(i)}$ defined via the renormalized projection of the function $f^{\gamma} = u^{\gamma}$ above.
\end{proposition}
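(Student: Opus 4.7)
The plan is first to show that the sequence $T^{(i)}$ is Cauchy in Hausdorff distance in $\ell^1$, yielding a compact limit $\Tbg$, and then to show that the probability measures $\mu_i$ converge in Prokhorov distance to a limit $\mu$ on $\Tbg$. GHP convergence then follows by embedding all the spaces jointly in $(\Tbg, d)$. As a preliminary input, Proposition~\ref{prop:CRT stick probs} tells us that $W_k := (Y_k^{\beta+1} - Y_{k-1}^{\beta+1})/(\beta+1)$ are i.i.d.\ Exp$(1)$, so the strong law gives $Y_k \sim ((\beta+1)k)^{1/(\beta+1)}$ almost surely; applying the same tail bound with $x = C i^{-\beta/(\beta+1)} \log i$ and Borel--Cantelli then yields $\max_{k \geq i}(Y_{k+1} - Y_k) \to 0$ almost surely, at the rate $O(i^{-\beta/(\beta+1)} \log i)$.

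For the Cauchy property, note that in the $\ell^1$ embedding $T^{(i)} \subset T^{(j)}$ whenever $j \geq i$, so $d_H(T^{(i)}, T^{(j)}) = \sup_{x \in T^{(j)}} d_1(x, T^{(i)})$; for $x$ on the $k$-th branch with $k > i$ one has $d_1(x, T^{(i)}) \leq (Y_k - Y_{k-1}) + d_1(\rho(Z_{k-1}), T^{(i)})$. Iterating along the ancestral chain of gluings until it enters $T^{(i)}$ expresses $d_1(x, T^{(i)})$ as a sum of stick-lengths along that chain. The main obstacle is controlling this sum: the individual stick-lengths are small by the above, but the chain itself could in principle be long because the gluing density $f^\gamma_{Y_k}(u) \propto u^\gamma$ biases attachments toward the most recent branches, and this is precisely where the assumption $\gamma \leq \beta - 1$ enters. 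I would estimate $\mathbb{E}[d_1(\rho(Z_k), T^{(i)}) \mid (Y_m)_m]$ recursively using the explicit density $f^\gamma$: the probability that a single backward step from branch $k$ lands on a branch still outside $T^{(i)}$ is bounded above by $\bigl((Y_k - Y_i)/Y_k\bigr)^{\gamma+1} \approx 1 - (i/k)^{(\gamma+1)/(\beta+1)}$, and iterating yields a series in $k > i$ whose terms become summable (with room to spare for a union bound) exactly when $\gamma \leq \beta - 1$. The resulting almost-sure vanishing of $\sup_{x \in T^{(j)}} d_1(x, T^{(i)})$ establishes the Cauchy property and compactness of $\Tbg := \overline{\bigcup_i T^{(i)}}$.

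For the measure convergence, a direct calculation gives $\mu_j(T^{(i)}) = (Y_i / Y_j)^{\gamma+1} \to 0$ as $j \to \infty$, so the limit measure must charge only the boundary $\Tbg \setminus \bigcup_i T^{(i)}$. I would reparametrize as $\mu_i = (\varphi_i)_*((\gamma+1) v^\gamma \mathbbm{1}_{[0,1]}(v) dv)$ with $\varphi_i(v) := \rho(Y_i v)$, and show that $\varphi_i$ converges almost everywhere to a limit map $\varphi_\infty: [0,1] \to \Tbg$; pointwise convergence at a fixed $v \in (0,1]$ reduces once again to the ancestral chain estimate of the previous paragraph applied to $\rho(Y_i v)$. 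Defining $\mu := (\varphi_\infty)_*((\gamma+1) v^\gamma dv)$, Prokhorov convergence then follows by standard continuity of pushforwards combined with compactness, and together with the Hausdorff convergence this gives the desired GHP convergence of $(T^{(i)}, d_i, \mu_i)$ to $(\Tbg, d, \mu)$.
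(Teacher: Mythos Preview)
Your compactness argument via ancestral chains is different from the paper's, which instead follows Aldous directly: it proves the exponential tail bound $\pr{D(s,t)\geq x}\leq e^{-xs^{\gamma+1}/2}$ on the distance $D(s,t)=d(\rho(t),\Tbg(s))$ by showing that $D(s,t)$ is stochastically dominated by $\eps$ times a geometric variable with parameter $\tfrac12\eps s^{\gamma+1}$, for every small $\eps>0$. A first-passage argument then upgrades this to a bound on $\sup_{e^j\leq t\leq e^{j+1}}D(e^j,t)$, and Borel--Cantelli over $j$ gives the almost sure Hausdorff convergence. Your recursion is plausible in spirit, but you have left real work undone: controlling the expectation $\E{d_1(\rho(Y_k),T^{(i)})}$ recursively does not by itself handle $\sup_{k>i}$ almost surely; you need a tail bound sharp enough to survive a union bound over infinitely many $k$, and you have not said what the ``summable series'' actually is. (Minor point: the one-step probability that $Z_{k-1}>Y_i$ is $1-(Y_i/Y_{k-1})^{\gamma+1}$, not $((Y_k-Y_i)/Y_k)^{\gamma+1}$; these disagree for $\gamma>0$, though your asymptotic $1-(i/k)^{(\gamma+1)/(\beta+1)}$ is right.)

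The measure-convergence argument has a genuine gap. Your map $\varphi_i(v)=\rho(Y_iv)$ does \emph{not} converge pointwise for typical $v\in(0,1)$: as $i\to\infty$ the time $Y_iv$ crosses infinitely many Poisson points $Y_k$ (roughly $v^{\beta+1}i$ of them lie below $Y_iv$), and at each such crossing $\rho(Y_iv)$ jumps to the base of a freshly attached branch. The ancestral-chain estimate only says that $\rho(Y_jv)$ eventually lies close to the partial tree $T^{(i)}$; it does not say that successive points $\rho(Y_iv)$ and $\rho(Y_{i+1}v)$ are close to \emph{each other}, which is what pointwise convergence of $\varphi_i$ would require. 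The paper avoids this entirely: for any fixed $A\subset T^{(i)}$ it checks by a direct urn-type calculation that $(\mu_j(A^{\uparrow}))_{j\geq i}$ is a bounded martingale, hence convergent almost surely; combined with the Hausdorff convergence this makes $(\mu_j)_{j\geq 1}$ Cauchy in the Prokhorov metric on the compact space $\Tbg$, producing the limit $\mu$.
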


\begin{remark}
In fact we will only use the case $\gamma=\beta-1$.
\end{remark}

We will use the following result from \cite{archer2023ghp} to compare trees formed from different stick-breaking constructions.

\begin{proposition}\cite[Proposition 3.7]{archer2023ghp}.\label{prop:stick breaking close}
Let $(y_0, y_1, y_2, \ldots), (z_0, z_1, z_2, \ldots)$ and $(y_0', y_1', y_2', \ldots), (z_0', z_1', z_2', \ldots)$ be the inputs to two separate stick-breaking processes as defined in Definition \ref{def:stick breaking}. Fix any $j \geq 1$ and let $T^{(j)}$ and $T^{(j)'}$ be the trees formed after $j$ steps of the processes (i.e. each with $j$ branches). Let $d$ and $d'$ denote distances on $T^{(j)}$ and $T^{(j)'}$.

Fix some $\epsilon>0$ and suppose that the following holds.
\begin{enumerate}[(i)]
    \item $|y_i - y_i'| \leq \epsilon$ for all $i \leq j$ and $|z_i - z_i'| \leq \epsilon$ for all $i \leq j-1$,
    \item $|z_i - y_{\ell}| \geq 3\epsilon$ for all $i \leq j-1,\ell \leq j$.
\end{enumerate}
Then, for all $0 \leq i,\ell \leq j$, it holds that
\[
|d(y_i, y_{\ell}) - d'(y_i', y_{\ell}')| \leq 2j\epsilon.
\]
\end{proposition}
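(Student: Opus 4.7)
The plan is to argue in two stages: first, that hypotheses (i) and (ii) force the two stick-breaking trees to share the same combinatorial structure, and second, that under this matching, the distance between corresponding leaves decomposes into at most $j$ segments whose lengths are each preserved up to $2\epsilon$.

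For the first stage, I would define the parent function $p : \{2, \ldots, j\} \to \{1, \ldots, j-1\}$ by letting $p(k)$ be the unique index with $z_{k-1} \in (y_{p(k)-1}, y_{p(k)}]$, i.e.\ the index of the branch on which the base of the $k$-th stick is grafted. Hypothesis (ii) gives $y_{p(k)-1} + 3\epsilon \leq z_{k-1} \leq y_{p(k)} - 3\epsilon$; combining with (i) and the triangle inequality yields $z_{k-1}' \in (y_{p(k)-1}' + \epsilon, \, y_{p(k)}' - \epsilon) \subset (y_{p(k)-1}', y_{p(k)}']$. Hence the primed process has the \emph{same} parent function, and the two trees are combinatorially isomorphic via the obvious branch-by-branch identification.

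For the second stage, fix $i, \ell \in \{0, \ldots, j\}$ and consider the unique simple path $\Gamma$ in $T^{(j)}$ from $\rho(y_i)$ to $\rho(y_\ell)$. Since $\Gamma$ is simple and each branch is a single arc in the tree, $\Gamma$ decomposes into a disjoint union of at most $j$ segments, each contained in a single branch. The key bookkeeping observation is that every segment endpoint is the image under $\rho$ of a point in $\{y_k : 0 \leq k \leq j\} \cup \{z_k : 0 \leq k \leq j-1\}$: it is either an endpoint $\rho(y_i)$ or $\rho(y_\ell)$, a grafting point $\rho(z_{k-1})$ at which $\Gamma$ switches branches, or the most recent common ancestor $\rho(y_i) \wedge \rho(y_\ell)$; the latter is itself of the form $\rho(z_s)$, namely the higher of the two grafting points at which the ancestral chains of $y_i$ and $y_\ell$ land on their common branch. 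On any given branch, the $\ell_1$-embedding is isometric to half-line coordinates, so a segment with endpoint coordinates $v, w$ has length $|v - w|$.

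By the matched combinatorial structure from stage one, the corresponding path in $T^{(j)\prime}$ traverses the same sequence of branches, with each segment having primed endpoints at the corresponding points in $\{y_k', z_k'\}$. Hypothesis (i) then gives $|v - v'| \leq \epsilon$ and $|w - w'| \leq \epsilon$, so by the triangle inequality each segment length differs from its primed counterpart by at most $2\epsilon$. Summing over the at most $j$ segments yields $|d(y_i, y_\ell) - d'(y_i', y_\ell')| \leq 2j\epsilon$. The main delicacy is the bookkeeping in the second stage — verifying that every segment endpoint is indeed one of $\{\rho(y_k), \rho(z_k)\}$ (including the MRCA) and that no branch is traversed twice — but both facts follow cleanly from the uniqueness of simple paths in a tree and the description of grafting points provided by the stick-breaking construction.
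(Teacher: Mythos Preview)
The paper does not give a proof of this proposition; it is quoted directly from \cite[Proposition 3.7]{archer2023ghp}. So there is no in-paper argument to compare against. That said, your two-stage approach (first pin down the combinatorics, then bound each segment) is the natural one and is almost certainly what the cited reference does.

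Your argument is correct in outline, but the bookkeeping in stage two deserves one clarification. When you say a segment on a given branch has endpoints $v,w$ with length $|v-w|$, you are implicitly using the half-line parameter $t$. On branch $m$ this parameter ranges over $(y_{m-1},y_m]$, with the base of the branch corresponding to $t=y_{m-1}$ (even though as a \emph{point} of the tree it equals $\rho(z_{m-1})$). So when the path runs the full length of branch $m$, the two endpoint parameters are $y_m$ and $y_{m-1}$, not $y_m$ and $z_{m-1}$; the $z_{m-1}$ value is what you use on the \emph{parent} side of the transition. Once this is said explicitly, every segment endpoint parameter is indeed drawn from $\{y_k\}\cup\{z_k\}$, and the $2\epsilon$ bound per segment follows from (i).

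One more point worth recording: condition (ii) does not separate the $z_i$ from each other, so the relative order of grafting points on a common branch can flip between the two constructions. This does not break your argument --- the sequence of branches traversed by the geodesic depends only on the parent function, and on the shared branch the segment length is $|z_a - z_b|$ in one tree and $|z_a' - z_b'|$ in the other, which still differ by at most $2\epsilon$ via the reverse triangle inequality --- but your phrase ``same combinatorial structure'' slightly overstates what stage one delivers. What you actually need, and have, is that the parent function is preserved.
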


\subsection{Random variables}
Here we present two elementary results that will be useful in later sections.

\begin{claim}\cite[Claim 2.9]{archer2023ghp}.\label{cl: two uniforms are close}
Let $\eps>0$ and let $0<a<b$ with $b-a \leq \eps$. Let $X_a$ be distributed as $\textsf{Uniform}\left(\left[0,a\right]\right)$ and $X_b$ as $\textsf{Uniform} \left(\left[0,b\right]\right)$. Then, we can couple $X_a$ and $X_b$ such that $\pr{|X_a - X_b| > \eps} = 0$.
\end{claim}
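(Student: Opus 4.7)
The plan is to use the standard monotone (quantile) coupling. I would introduce a single uniform random variable $U \sim \textsf{Uniform}([0,1])$ on some common probability space, and then define $X_a := aU$ and $X_b := bU$. The first step is to check that this gives the correct marginal laws, which is immediate from the scaling property of uniform distributions: for any Borel set $A \subseteq [0,a]$, the event $\{aU \in A\}$ has probability $|A|/a$, and similarly for $X_b$ on $[0,b]$.

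With the marginals in hand, the second step is the trivial distance bound
\[
|X_a - X_b| = |aU - bU| = (b-a)U \leq b-a \leq \eps,
\]
which in fact holds pointwise on the probability space (not just almost surely), so $\pr{|X_a - X_b| > \eps} = 0$ as required.

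There is no real obstacle here; the whole content of the claim is that the monotone coupling of uniform distributions on nested intervals controls the pointwise distance by the difference of the right endpoints. If a more explicit construction were desired (for instance, if later one wanted $X_a = X_b$ whenever $X_b \leq a$), one could instead couple by setting $X_a = X_b$ on $\{X_b \leq a\}$ and resampling $X_a$ uniformly on $[a-(b-a), a]$ on $\{X_b > a\}$; but the monotone coupling is cleaner and suffices for the stated conclusion.
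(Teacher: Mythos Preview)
Your proof is correct; the monotone coupling $X_a = aU$, $X_b = bU$ gives the right marginals and yields $|X_a - X_b| = (b-a)U \leq \eps$ pointwise. The paper itself does not prove this claim but simply cites it from \cite{archer2023ghp}, so there is no in-paper argument to compare against; your approach is the standard one and is certainly what the cited result has in mind.
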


	\begin{lemma}\label{lem: proh close rv}
	For any $L>0$, $k \in \NN$, let $X_{L,k}$ be the random variable on $(0, \infty)$ satisfying 
	\[
	\pr{X_{L,k} > x} = \exp\left\{-\frac{(x+L)^{\frac{k+1}{k}}-L^{\frac{k+1}{k}}}{k+1}\right\}.
	\]
	Then for any $\delta > 0$, there exists $\eta = \eta (\delta, L, k)>0$ such that the following holds. Let $Y$ be another random variable on $(0, \infty)$, and suppose that for all $x>0$,
	\begin{equation}\label{eqn:variables close}
	|\pr{X_{L,k} > x} - \pr{Y > x}|<\eta.
	\end{equation}
	Then this implies that we can couple $X_{L,k}$ and $Y$ so that $\pr{|X_{L,k} - Y|>\delta} < \delta$.
	
	Furthermore, for any $\delta, k, L_1$ and $L_2$ with $L_1< L_2$, there exists $\eta = \eta(\delta, L_1, L_2,k)$ such that we can couple $X_{L,k}$ and $Y$ as described above for every $L\in [L_1,L_2]$.
	\end{lemma}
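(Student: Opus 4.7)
The plan is a standard quantile coupling. Let $F(x) := \pr{X_{L,k} > x}$, which is smooth and strictly decreasing from $1$ to $0$ on $[0,\infty)$, with density $f(x) = \frac{(x+L)^{1/k}}{k} F(x) > 0$ obtained by direct differentiation. Given $\delta > 0$, I would pick $M = M(\delta, L, k)$ with $F(M) < \delta/2$ and set $c := \min_{x \in [0, M+\delta]} f(x) > 0$; integrating $f$ then yields the quantitative bound $F(x) - F(x+h) \geq ch$ for all $0 \leq x \leq x+h \leq M+\delta$.

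Next, let $U \sim \mathrm{Unif}(0,1)$, write $G(y) := \pr{Y > y}$, and define the coupling
\[
X := F^{-1}(U), \qquad Y := \inf\{y \geq 0 : G(y) < U\};
\]
both random variables have the correct marginal laws, since $F^{-1}$ is a genuine inverse and the second definition is the standard quantile inverse of a tail function. Setting $\eta := c\delta/4$, I claim that $|X - Y| \leq \delta/2$ on the event $\{X \leq M\}$, which has probability at least $1 - \delta/2$. Indeed, on $\{Y > X + \delta/2\}$ the definition of the inverse gives $G(X + \delta/2) \geq U = F(X)$, and combining with $F(X) - F(X + \delta/2) \geq c\delta/2$ and $\|F - G\|_\infty < \eta$ forces the contradiction $\eta \geq c\delta/2$. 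A symmetric argument on $\{Y < X - \delta/2\}$ (which additionally requires $X \geq \delta/2$) gives the same contradiction, and when $X < \delta/2$ the inequality $Y \geq X - \delta/2$ holds automatically since $Y \geq 0$. Hence $\pr{|X_{L,k} - Y| > \delta} \leq \pr{X > M} < \delta/2 < \delta$.

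For the uniform version over $L \in [L_1, L_2]$, the key observation is that $F_{L,k}(x)$ is monotone decreasing in $L$ for each fixed $x$, since
\[
\partial_L \bigl((x+L)^{(k+1)/k} - L^{(k+1)/k}\bigr) = \tfrac{k+1}{k}\bigl((x+L)^{1/k} - L^{1/k}\bigr) > 0.
\]
Thus a single $M$ satisfying $F_{L_1, k}(M) < \delta/2$ works uniformly for $L \in [L_1, L_2]$, and the uniform density lower bound $f_{L, k}(x) \geq \frac{L_1^{1/k}}{k} F_{L_2, k}(M + \delta) > 0$ on $[0, M + \delta]$ yields a single constant $c = c(\delta, L_1, L_2, k)$; the same $\eta := c\delta/4$ therefore works across the whole range. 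There is no genuine obstacle in the proof; the only mild care needed is the two monotonicity-based event-level inclusions $\{Y > y\} \subset \{G(y) \geq U\}$ and $\{Y < y\} \subset \{G(y) < U\}$ used above, both of which are immediate consequences of the fact that $G$ is non-increasing.
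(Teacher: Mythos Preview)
Your proof is correct. The quantile coupling via a common uniform, together with the local lower bound on the density of $X_{L,k}$ and the truncation at level $M$, yields exactly the claimed coupling, and your monotonicity-in-$L$ argument cleanly produces a single $\eta$ on $[L_1,L_2]$.

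As for comparison with the paper: the paper does not actually prove this lemma but simply cites \cite[Lemma~2.10]{archer2023ghp} for the case $k=1$ and asserts that the proof for general $k$ is identical. Your argument is precisely the standard quantile-coupling proof one would expect behind that citation, so there is no substantive difference in approach---you have just written out in full what the paper outsources.
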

\begin{proof}
    This was proved in \cite[Lemma 2.10]{archer2023ghp} in the case $k=1$. The proof for general $k$ is the same.
\end{proof}

\subsection{Urn models}\label{sctn:urn}

Here we briefly recap some background on urn models, following the presentation of \cite[Section 4, page 15]{AldousCRTI}.

We let $U_m$ and $V_m$ respectively denote the number of black and white balls in an urn at time $m$. At time $m+1$, we choose a ball uniformly among those present, and then add $\Delta_{m+1}$ balls of the same colour, where $\Delta_{m+1}$ is independent of the colour chosen. Letting $A_m$ denote the event that the colour chosen is black, we therefore have
\begin{align*}
    (U_{m+1}, V_{m+1}) = \begin{cases}
    (U_{m} + \Delta_{m+1} , V_{m+1}) \text{ on }& A_m, \\
    (U_{m} , V_{m+1} + \Delta_{m+1}) \text{ on }& A_m^c,
    \end{cases}
    \qquad \ \prcond{A_m}{(U_m, V_m)}{} = \frac{U_m}{U_m+V_m},
\end{align*}
and, given $U_m$ and $V_m$, the variables $A_m$ and $\Delta_{m+1}$ are conditionally independent. We emphasise that $U_m, V_m$ and $\Delta_m$ are not required to be integers. Then (see \cite[Equations (25) and (26)]{AldousCRTI}),
\begin{align}\label{eqn:urn variance bound}
\begin{split}
    R_m = \frac{U_m}{U_m + V_m} \text{ is a martingale,} \\
    \Var ({R_{m+1}}) - \Var ({R_m}) \leq \E{ \left( \frac{\Delta_{m+1}}{U_m + V_m} \right)^2}.
\end{split}
\end{align}

\section{Convergence of stick-breaking with general gluing rules}\label{sctn:SB convergence}

In the case $\gamma=0$, it was shown in \cite[Theorem 1.1]{curien2017random} that the limit $\Tbg$ exists, is compact and has Hausdorff dimension equal to $\frac{\beta+1}{\beta}$ almost surely. Our aim in this section is to extend this to all $0 \leq \gamma \leq \beta -1$. Note that we will only need the case $\beta-1=\gamma$.

\subsection{Convergence and compactness}\label{sctn:SB conv and comp}

Fix $\beta, \gamma\geq 0$ and recall the definition of the partial trees $(T^{(i)})_{i \geq 1}$ from Section \ref{sctn:stick breaking defs}. 
%
We follow the strategy of \cite[Section 4]{AldousCRTI} (for the case $\beta=1, \gamma=0$) and proceed via a series of lemmas. For these we will need a few definitions. We also recall the definition of the projection $\rho$ defined in \eqref{eqn:AB partial spanning tree time t}.

Also let $d$ denote the graph metric on the tree $\Tbg$. For the next few propositions, we define $D$ by 
\[
D(s,t) = \inf_{0 \leq r \leq s} d(\rho(r), \rho (t)) \qquad 0<s<t.
\]

\begin{proposition}\label{prop:tightness stick breaking exp tail}(cf \cite[Lemma 5 and Lemma 9]{AldousCRTI}).
Suppose that $0 \leq \gamma \leq \beta -1$. For all $t>s\geq 1$ and all $x>0$,
\[
\pr{D (s,t) \geq x} \leq e^{-\frac{x}{2}s^{\gamma+1}}.
\]
\end{proposition}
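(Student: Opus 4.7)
The plan is to analyze $D(s,t)$ via the backward spine from $\rho(t)$ toward $\rho([0,s])$. Set $U_0 = t$, and for $k \geq 0$ let $U_{k+1} = Z_{j_k - 1}$, where $j_k$ is the unique index with $U_k \in (Y_{j_k-1}, Y_{j_k}]$; this is the gluing time of the branch of $\Tbg$ containing the point $\rho(U_k)$. Let $B_k = U_k - Y_{j_k - 1}$ denote the length of the $k$-th segment along the backward spine, and let $N = \inf\{k : Y_{j_k-1} \leq s\}$. First I will show the geometric inequality $D(s,t) \leq \sum_{k=0}^N B_k$: at step $N$ the branch containing $\rho(U_N)$ already starts at a time $\leq s$, so the spine passes through $\rho(s)$ before reaching the next gluing point, terminating the backward descent within distance $U_N - s \leq B_N$ of $\rho(U_N)$.

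Next, I will verify that $(U_k)_{k \geq 0}$ is a Markov chain under the Poisson law on the sticks. Conditionally on $U_k = u$, the largest Poisson point $Y_{j_k - 1}$ in $[0, u]$ has density $y^\beta \exp\bigl(-(u^{\beta+1} - y^{\beta+1})/(\beta+1)\bigr)$ on $(0, u]$, and then $U_{k+1}$ is drawn independently with density $(\gamma+1)z^\gamma/Y_{j_k-1}^{\gamma+1}$ on $[0, Y_{j_k-1}]$ (by the definition of the stick-breaking construction in Definition~\ref{def:random stick breaking}). The point is that conditioning on $U_k$ reveals nothing about the Poisson process restricted to $[0, U_{k+1}]$, so the chain is genuinely Markovian and the $B_k$'s can be controlled one step at a time.

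The main estimate is a Chernoff bound with $\lambda = s^{\gamma+1}/2$. Using \Cref{prop:CRT stick probs} (which gives $\pr{B_k \geq b \mid U_k = u} = \exp\bigl(-(u^{\beta+1} - (u-b)^{\beta+1})/(\beta+1)\bigr)$), together with the convexity inequality $u^{\beta+1} - (u-b)^{\beta+1} \geq (\beta+1) s^\beta b$ valid whenever $u - b \geq s$, I aim to show $\E{e^{\lambda B_k} \mathbbm{1}\{U_k \geq s\} \mid U_k = u}$ is bounded uniformly in $u \geq s$. The assumption $\gamma \leq \beta - 1$ combined with $s \geq 1$ is essential here since it guarantees $\lambda \leq s^\beta/2$, so the exponential moment rate remains strictly below the Poisson intensity at $s$. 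Iterating this across the Markov chain via an integral-inequality / fixed-point argument for $\phi(u) := \E{e^{\lambda D(s,u)}}$, I expect to obtain a uniform bound $\phi(u) \leq C$ for some absolute constant $C$.

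Markov's inequality then yields $\pr{D(s,t) \geq x} \leq C e^{-\lambda x}$, and the multiplicative constant $C$ can be absorbed either by running the argument with a slightly larger $\lambda$ (using the slack $\gamma+1 \leq \beta$ and $s \geq 1$) or by noting the bound is trivial when $e^{-xs^{\gamma+1}/2} \geq 1/C$. The main technical obstacle I anticipate is closing the integral inequality for $\phi(u)$ uniformly in $u$: the recursion involves the full Poisson density on $(s,u]$, and showing the fixed point does not drift as $u \to \infty$ requires a careful integration by parts together with the strict inequality $\lambda < s^\beta$ provided by the hypothesis $\gamma + 1 \leq \beta$.
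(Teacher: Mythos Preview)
Your backward-spine decomposition and the Markov property for $(U_k)$ are set up correctly, and a Chernoff strategy along these lines is in principle viable, but the proposal leaves the decisive step open: you write ``I expect to obtain a uniform bound $\phi(u)\le C$'' and call closing the integral inequality ``the main technical obstacle'' without actually doing it. That fixed-point step is where all the work lies, and it is not routine: the naive one-step bound $\E{e^{\lambda B_0}\mid U_0=u}\le 2$ iterates to $2^{N+1}$, not a constant. Note also that your scheme for absorbing $C$ breaks down precisely in the case of interest $\gamma+1=\beta$, where there is no slack to push $\lambda$ above $s^{\gamma+1}/2$; and the second alternative (the bound being ``trivial'' for small $x$) does not deliver the stated inequality either, since $e^{-xs^{\gamma+1}/2}<1$ for every $x>0$. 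So as written there is a genuine gap.

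The paper takes a much more direct route, following Aldous. It discretises $[s,t]$ into steps of size $\eps$ and shows by forward induction on $r\in[s,t]$ that $D(s,r)$ is stochastically dominated by $\eps\cdot\textsf{Geometric}\bigl(\tfrac12\eps s^{\gamma+1}\bigr)$. The induction rests on a three-event decomposition at each step: either (A) there is a Poisson point in $[r-\eps,r]$ whose attachment location $Z_i$ lands in $[0,s]$, in which case $D(s,r)\le\eps$; or (B) nothing relevant happens and $D(s,r)\le\eps+D(s,r-\eps)$; or (C) the attachment lands in $(s,r-\eps)$ and $D(s,r)\le\eps+D(s,r')$ for some earlier $r'$. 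The key computation is $\pr{A}\ge\pr{\textsf{Poi}(\eps(r-\eps)^\beta)\ne 0}\cdot(s/r)^{\gamma+1}\ge\tfrac12\eps s^{\gamma+1}$, using $\gamma+1\le\beta$ and $r\ge s\ge 1$. Since the right-hand sides in (B) and (C) are dominated by $\eps\cdot\textsf{Geometric}$ by the inductive hypothesis, so is $D(s,r)$; letting $\eps\downarrow 0$ gives the exponential tail directly, with no multiplicative constant to absorb. This sidesteps exponential moments entirely and avoids the fixed-point difficulty you flag.
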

\begin{proof}
We follow an adaptation of the proof of \cite[Lemma 9]{AldousCRTI}. In particular, fix $t>s\geq 1$ and then take $\eps>0$ small enough that $\eps(k+1)t^{\gamma+1} \ll 1$. We then claim that for all $r \in [s,t]$, the quantity $D(s,t)$ is stochastically dominated by $\eps X$ where $X \sim \textsf{Geometric}(\eps s^{\gamma+1})$. Note that this is automatically true for all $r \in [s, s+\eps]$; for $r \in [s+\eps, t]$ we proceed by induction and assume that $D(s,r)$ is dominated by the claimed random variable for all $r \in [s, s+i\eps]$ for some $i \geq 1$.

Now suppose that $q \in [0,1]$, that $A,B$ and $C$ are three events partitioning the probability space, that $\pr{A}\geq q$, and $\psi_r$ is a random variable satisfying
\begin{equation}\label{eqn:domination Aldous geometric}
\psi_r \leq \begin{cases} \eps &\text{ on the event } A, \\
     \eps + \psi_{r-\eps} &\text{ on the event } B, \\
      \eps + \eta_{r-\eps} &\text{ on the event } C,
\end{cases}
\end{equation}
where $\psi_{r-\eps}$ and $\eta_{r-\eps}$ are both dominated by \textsf{Geometric}($q$) random variables and are independent of $A$, $B$ and $C$. Then it follows by \cite[Equation (37)]{AldousCRTI} that if $\psi_{r-\eps}$ and $\eta_{r-\eps}$ are dominated by $\eps$ times a \textsf{Geometric}($q$) random variable, then so is $\psi_r$.

We take $q=\frac{1}{2}\eps s^{\gamma+1}$, $\psi_r=D(s,r)$ and apply this to the events
\begin{align*}
    A &= \{\exists i \text{ such that } Y_i \in [r-\eps, r] \text{ and } Z_i \in [0,s] \text{ for the maximal such } i \} \\
    B &= \{\nexists i \text{ such that } Y_i \in [r-\eps, r], \text{ or } \exists i \text{ such that } Y_i \in [r-\eps, r] \text{ and } Z_i \in [r-\eps,r]  \text{ for the maximal such } i\} \\
    C &= \{\exists i \text{ such that } Y_i \in [r-\eps, r] \text{ and } Z_i \in [s,r-\eps] \text{ for the maximal such } i \}.
\end{align*}
It then follows by an easy calculation that \eqref{eqn:domination Aldous geometric} is satisfied (recall that $\gamma +1 \leq \beta$):
\[
\pr{A} \geq \pr{\textsf{Poisson}(\eps(r-\eps)^{\beta}) \neq 0}\left(\frac{s}{r}\right)^{\gamma+1} \geq \frac{1}{2}\eps s^{\gamma+1}.
\]
Moreover, the required independence assumptions hold as a result of the Poisson structure, and on the event $C$ we take $\eta_{r-\eps}$ to be a weighted average over the laws of $\psi_{r'}$ for $r' \in [s,r-\eps]$; these all satisfy the required domination by the inductive hypothesis (as does $\psi_{r-\eps}$ for the event $B$). Since $t$ was arbitrary and we can always choose a sufficiently small $\eps>0$ to run this argument, this establishes the claimed domination.

To conclude, note that it therefore follows that
\begin{align*}
 \pr{D (s,t) \geq x} \leq \pr{X \geq \eps^{-1} x} \leq \left(1-\frac{1}{2}\eps s^{\gamma+1}\right)^{\eps^{-1} x} \leq \exp \{-\frac{1}{2}s^{\gamma+1}x\}.
\end{align*}
\end{proof}
This leads us to the following.

\begin{corollary}(cf \cite[Lemma 6]{AldousCRTI}).\label{cor:Aldous sup D interval SB}
Suppose that $0 \leq \gamma \leq \beta -1$. Then, for any $j \geq 0$,
    \[
    \pr{\sup_{e^j \leq t \leq e^{j+1}} D (e^j, t) > 3(\beta+2)je^{-j({\gamma+1})}} \leq e^{2\beta+3-j}
    \]
\end{corollary}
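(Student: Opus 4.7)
The plan is to combine the tail bound of Proposition \ref{prop:tightness stick breaking exp tail} with a union bound over the grafting times, following Aldous' strategy in the proof of \cite[Lemma 6]{AldousCRTI}. The main structural observation is that $t \mapsto D(e^j, t)$ grows at unit speed on the interior of each stick $(Y_{i-1}, Y_i)$ (since $\rho$ moves at unit speed away from the base $\rho(Z_{i-1}) \in \Tbg(Y_{i-1}) \supset \Tbg(e^j)$), while at each grafting time $Y_i$ it jumps from $D(e^j, Y_i^-)$ to $D(e^j, Y_i^+) = D(e^j, Z_i)$. Since $Z_i < Y_i$, this post-jump value was already attained at the earlier time $Z_i$, so jumps never strictly increase the running supremum $S(u) := \sup_{e^j \le t \le u} D(e^j, t)$. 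In particular $S$ is continuous and $1$-Lipschitz, and $S(e^{j+1})$ is attained at some $t^\star$ in the (almost surely finite) set $\{Y_i^- : Y_i \in [e^j, e^{j+1}]\} \cup \{e^{j+1}\}$.

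Next I would union-bound over these candidate times. Since the grafting times form a Poisson process with intensity $t^\beta\,dt$ and $D(e^j, t^-)$ depends only on the trajectory strictly before $t$, the Campbell/Slivnyak formulas for Poisson processes yield
\[
\pr{\exists\, i : Y_i \in [e^j, e^{j+1}],\ D(e^j, Y_i^-) > x}\ \le\ \int_{e^j}^{e^{j+1}} t^\beta\, \pr{D(e^j, t) > x}\, dt,
\]
where I use that $D(e^j, t^-) = D(e^j, t)$ almost surely at each deterministic $t$ (since deterministic times are almost surely not grafting times). Adding the boundary term $\pr{D(e^j, e^{j+1}) > x}$ and applying Proposition \ref{prop:tightness stick breaking exp tail}, we arrive at
\[
\pr{\sup_{e^j \le t \le e^{j+1}} D(e^j, t) > x}\ \le\ \bigl(\mu_j + 1\bigr)\, e^{-x\, e^{j(\gamma+1)}/2}, \qquad \mu_j := \int_{e^j}^{e^{j+1}} t^\beta\, dt\ \le\ \frac{e^{(\beta+1)(j+1)}}{\beta+1}.
\]

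To conclude, I would substitute $x = 3(\beta+2)\, j\, e^{-j(\gamma+1)}$, so that $e^{-x\, e^{j(\gamma+1)}/2} = e^{-3(\beta+2)j/2}$, and check the inequality $(\mu_j + 1)\, e^{-3(\beta+2)j/2} \le e^{2\beta + 3 - j}$ for all $j \ge 0$ and $\beta \ge 0$ by a direct comparison of exponents (the excess $j(\beta+1) - 3(\beta+2)j/2 = -j(\beta+4)/2$ is sufficiently negative, and the prefactor $2 e^{\beta+1}/(\beta+1)$ is swallowed by $e^{2\beta+3}$). The hardest part will be justifying the Slivnyak/Campbell step cleanly: one has to verify both that inserting a Poisson point at time $t$ does not alter the left limit $D(e^j, t^-)$ (which is geometrically clear, since the inserted point only affects the process after $t$) and that this left limit coincides with $D(e^j, t)$ almost surely at fixed $t$. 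Modulo this technical point, the rest of the argument is a routine computation, and the hypothesis $\gamma \le \beta - 1$ enters only via its use in Proposition \ref{prop:tightness stick breaking exp tail}.
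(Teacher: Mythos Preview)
Your argument is correct and the numerics check out, but your route differs from the paper's. The paper follows Aldous' original Lebesgue-measure trick: it defines $\tau(x)=\inf\{t\ge s:D(s,t)\ge x\}$, upper-bounds $\E{\textsf{Leb}\{s\le t\le u+1:D(s,t)\ge x\}}$ by $(u+1-s)e^{-xs^{\gamma+1}/3}$ via Proposition~\ref{prop:tightness stick breaking exp tail}, lower-bounds the same expectation by $\pr{\tau\le u+1}\cdot\econd{C_\tau\wedge(u+1)-\tau}{\tau\le u+1}$ (using that $D$ stays above $x$ until the next Poisson point $C_\tau$), and then compares. Your approach instead exploits the piecewise-linear structure of $t\mapsto D(e^j,t)$ directly: the supremum is attained at some $Y_i$ or at the endpoint, and Mecke/Slivnyak turns the union bound over the random Poisson points into the deterministic integral $\int_{e^j}^{e^{j+1}} t^\beta\,\pr{D(e^j,t)>x}\,dt$.

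Your argument is arguably cleaner and more direct once one is comfortable with the Mecke formula, and it avoids the slightly awkward lower bound on the expected gap $\E{C_t\wedge(u+1)-t}$. The paper's approach, by contrast, is fully elementary (just Fubini and a waiting-time estimate) and transplants verbatim to the discrete setting of Corollary~\ref{cor:Aldous sup D interval}, where there is no clean Poisson structure to invoke Slivnyak. The technical point you flag about Slivnyak is fine: since each mark $Z_i$ depends only on the location of its own point $Y_i$, the marked process is a Poisson process on $\RR^+$ with independent marks, and $D(e^j,t^-)$ is measurable with respect to the restriction to $[0,t)$, so inserting a point at $t$ leaves it unchanged.
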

\begin{proof}
Set $s=e^j, u=e^{j+1}$ and
\[
\tau = \tau(x) = \inf\{t \geq s: D(s,t) \geq x\}.
\]
We have by Proposition \ref{prop:tightness  stick breaking exp tail} that, for any $s<u$ and any $x>0$:
    \begin{align}\label{eqn:Aldous exp bound interval UB SB}
        \E{\textsf{Leb} \{s \leq t \leq u+1: D(s,t) \geq x\}} \leq (u+1-s)e^{-\frac{x}{3}s^{\gamma+1}}.
    \end{align}
Now for any $r \geq 0$ set $C_{r} = \inf\{r' > r: \exists i \text{ such that } Y_i=r'\}$. Then
\begin{align*}
\{t \in [\tau, C_{\tau}]\} \subset \{D(s,t) \geq x\},
\end{align*}
and hence
\begin{align}\label{eqn:Aldous exp bound interval LB SB}
\begin{split}
\E{\textsf{Leb} \{s \leq t \leq u+1: D(s,t) \geq x\}} &\geq \E{\mathbbm{1}\{\tau \leq u+1\} (C_{\tau} \wedge (u+1)-\tau)} \\
 &\geq \pr{\tau \leq u+1}\econd{C_{\tau} \wedge (u+1)-\tau}{\tau \leq u+1}.
 \end{split}
\end{align}
Now note that, for any $t \geq 0$, 
\begin{align*}
    \E{(C_t \wedge (u+1) - t)} \geq \int_0^{u+1} \pr{ (C_t \wedge (u+1) - t) >r} dr &\geq \int_0^{u+1} \pr{\textsf{Poi}(r(u+1)^{\beta})=0} dr \\
    &\geq  (u+1)^{-\beta}(1-e^{-(u+1)^{\beta+1}}).
\end{align*}
By the strong Markov property, we can substitute this into \eqref{eqn:Aldous exp bound interval LB SB} and then compare to \eqref{eqn:Aldous exp bound interval UB SB} to deduce that
\begin{align*}
    \pr{\tau \leq u+1} \leq \frac{(u+1-s)e^{-\frac{x}{3}s^{\gamma+1}}}{(u+1)^{-\beta}(1-e^{-(u+1)^{\beta+1}})} \leq e^{2\beta+2} s^{\beta+1} e^{-\frac{x}{3}s^{\gamma+1}}.
\end{align*}
In particular, reintroducing the dependence on $x$ and taking $x=\frac{3(\beta+2)\log s}{s^{\gamma+1}}$ gives
\begin{align*}
    \pr{\tau\left(\frac{3(\beta+2)\log s}{s^{\gamma+1}}\right) \leq u+1} \leq e^{2\beta+3} s^{-1} = e^{2\beta+3-j}.
\end{align*}
\end{proof}
This allows us to tie up the proof of Hausdorff convergence.

\begin{corollary}\label{cor:stick breaking conv}
Suppose that $0 \leq \gamma \leq \beta -1$. Almost surely,
\[
\lim_{i \to \infty} (d_{H}(T^{(i)}, \Tbg) = 0.
\]
\end{corollary}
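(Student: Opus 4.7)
The plan is to combine the previous corollary with Borel-Cantelli to get an almost sure control on how far newly added sticks can be from the earlier part of the tree, then iterate along dyadic scales $[e^j, e^{j+1}]$.

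First, I would observe that since $\sum_j e^{2\beta+3-j} < \infty$, Borel-Cantelli applied to \cref{cor:Aldous sup D interval SB} yields the following: almost surely there exists a (random) $J = J(\omega)$ such that for every $j \geq J$,
\[
\sup_{e^j \leq t \leq e^{j+1}} D(e^j, t) \leq \eta_j, \qquad \text{where } \eta_j := 3(\beta+2) j e^{-j(\gamma+1)}.
\]
Unwinding the definition of $D$, this says that every point $\rho(t)$ with $t \in [e^j, e^{j+1}]$ lies within $\ell^1$-distance $\eta_j$ of some point of $\rho([0, e^j]) = \Tbg(e^j)$. Equivalently, $\rho([0, e^{j+1}]) \subset \rho([0, e^j])^{\eta_j}$, the $\eta_j$-fattening.

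Next I would iterate this inclusion along scales. By induction on $k$,
\[
\rho([0, e^{j+k}]) \subset \rho([0, e^j])^{\eta_j + \eta_{j+1} + \cdots + \eta_{j+k-1}}
\]
for every $j \geq J$ and $k \geq 0$, and hence passing to $k \to \infty$ and taking closures,
\[
\Tbg \subset \overline{\rho([0, e^j])}^{\,\sigma_j}, \qquad \sigma_j := \sum_{\ell \geq j} \eta_\ell.
\]
Since $\gamma + 1 > 0$, the series $\sum_\ell \ell\, e^{-\ell(\gamma+1)}$ converges, so $\sigma_j \to 0$ as $j \to \infty$.

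Finally, since the points $(Y_i)_{i \geq 1}$ of a Poisson process on $[0,\infty)$ with intensity $t^\beta\,dt$ satisfy $Y_i \to \infty$ almost surely, for any fixed $j \geq J$ we eventually have $Y_i \geq e^j$, and therefore $T^{(i)} = \rho([0, Y_i]) \supset \rho([0, e^j])$. Combined with the inclusion $T^{(i)} \subset \Tbg$, we get
\[
d_H(T^{(i)}, \Tbg) = \sup_{x \in \Tbg} d(x, T^{(i)}) \leq \sigma_j,
\]
which tends to $0$ by letting $j \to \infty$. This yields the claimed almost sure convergence.

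The only mild obstacle is verifying the iterated fattening step cleanly (making sure one can pass to closures and to the countable union $\rho([0,\infty))$), but this is a routine consequence of the definition of the $\ell^1$-embedding given just before \cref{prop:CRT stick probs}; no additional probabilistic input is needed beyond the one Borel-Cantelli step.
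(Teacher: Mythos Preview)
Your proof is correct and follows essentially the same route as the paper: apply Borel--Cantelli to \cref{cor:Aldous sup D interval SB} along the scales $[e^j,e^{j+1}]$, deduce that $\rho([0,e^J])$ is eventually $o(1)$-dense in $\Tbg$, and finish using $Y_i\to\infty$. The only minor difference is that the paper exploits the monotonicity $D(e^J,t)\le D(e^j,t)$ for $j\ge J$ to get the bound $\sup_{j\ge J}\eta_j$ directly, whereas you iterate fattenings and obtain the (equally sufficient) tail sum $\sigma_J=\sum_{j\ge J}\eta_j$.
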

\begin{proof}
By summing over $j \geq J$ and applying a union bound we deduce from Corollary \ref{cor:Aldous sup D interval SB} that
 \begin{equation}\label{eqn:SB D total distance}
    \pr{\sup_{e^J \leq t} D_n (e^J, t) > 6(\beta+2)Je^{-J(\gamma+1)}} \leq \pr{\exists j \geq J: \sup_{e^j \leq t \leq e^{j+1}} D_n (e^j, t) > 3(\beta+2)je^{-j({\gamma+1})}} \leq e^{2\beta+4-J}.
 \end{equation}
In particular this implies that $ \pr{d_{H}(\Tbg(e^J), \Tbg) \geq 6(\beta+2)Je^{-J(\gamma+1)}}$ enjoys the same upper bound, and hence by Borel-Cantelli we deduce that $d_{GH}(\Tbg(e^J), \Tbg) \to 0$ almost surely. The result follows by noting that $Y_i \to \infty$ almost surely.
\end{proof}

The previous corollary also shows that $\Tbg$ is compact (since the space of non-empty compact subspaces of $\ell^1(\RR)$ endowed with the Hausdorff metric is complete.

\subsection{Convergence of measures}

For the case $\gamma=0$, it was shown in \cite[Theorem 2.5]{curien2017random} that the renormalized length measures converge using martingale techniques. This is in the same spirit as the argument used in \cite[Theorem 3]{AldousCRTI} for the case of the CRT.

More generally, we should be able to apply the same argument to the measure defined by renormalizing the measure with density $f^{\gamma}(u) := u^{\gamma}$. In particular, for each $i \geq 1$ recall that $Z_i$ was chosen on the interval $[0,Y_{i})$ according to the density $f_{Y_i}(u) = \frac{(\gamma + 1)u^{\gamma}}{Y_{i}^{\gamma+1}}$. We let $\mu_i$ denote the corresponding pushforward measure onto $T^{(i)}$, i.e. defined by $\mu_i(A) = \pr{Z_i \in \rho^{-1}(A)}$ where $\rho$ is the canonical projection defined at the start of Section \ref{sctn:SB conv and comp}.

For a set $A \subset \Tbg$, we let $A^{\uparrow}$ denote the set of descendants of $A$ (see Section \ref{sctn:stick breaking defs} for the definition of descendant). The key observation leading to convergence of the measures $\mu_i$ is that for any $A \subset T^{(i)}$, the sequence $(\mu_j (A^{\uparrow}))_{j \geq i}$ is a martingale. Indeed, if $a_j = \int_{Y_{j-1}}^{Y_j} (\gamma + 1)u^{\gamma} du$ and $A_j= \int_{0}^{Y_j} (\gamma + 1)u^{\gamma} du$, then
\begin{align}\label{eqn:measure MG prop}
    \econd{\mu_{j+1}(A^{\uparrow})}{\mu_{j}(A^{\uparrow})} = \mu_j(A^{\uparrow}) \frac{A_j\mu_j(A^{\uparrow})+a_{j+1}}{A_{j+1}} + \left( 1 - \mu_j(A^{\uparrow}) \right) \frac{A_j\mu_j(A^{\uparrow})}{A_{j+1}} 
    = \mu_j(A^{\uparrow}).
\end{align}
Since $\mu_{j}(A^{\uparrow}) \in [0,1]$, it therefore follows that $\mu_{j}(A^{\uparrow})$ converges almost surely. This leads to the following proposition.

\begin{proposition}(cf \cite[Theorem 3]{AldousCRTI}, \cite[Lemma 2.9]{curien2017random}).\label{prop:measure conv SB}
There exists a measure $\mu$ on $\Tbg$ such that $\mu_j \to \mu$ almost surely as $j \to \infty$. Moreover, $\mu (\mathcal{L}(\Tbg)) = \mu (\Tbg)=1$, and $\text{supp} (\mu) = \Tbg$.
\end{proposition}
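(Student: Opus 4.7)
The plan is to follow the strategy already sketched by the authors, extending Aldous's argument~\cite{AldousCRTI} and that of Curien--Haas~\cite{curien2017random}: promote the martingale identity \eqref{eqn:measure MG prop} into almost-sure Prokhorov convergence $\mu_j \to \mu$ on the compact space $\Tbg$, and then deduce the two support statements.

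First, I would fix a countable collection $\mathcal{P}$ of descendant sets $A^{\uparrow}$, where $A$ ranges over a countable dense family of sub-segments of the branches of each $T^{(i)}$ (for instance, closed intervals with rational endpoints inside each branch). For every such $A$, the sequence $(\mu_j(A^{\uparrow}))_{j \geq i}$ is a $[0,1]$-valued martingale by \eqref{eqn:measure MG prop}, so Doob's theorem yields an almost-sure limit $\mu^{\infty}(A^{\uparrow})$; taking the intersection of these full-probability events over all $A^{\uparrow} \in \mathcal{P}$ gives simultaneous convergence on a single event of full probability. By \cref{cor:stick breaking conv}, $\Tbg$ is compact, so the sequence of probability measures $(\mu_j)$ on $\Tbg$ is automatically tight.

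Second, I would identify any weak subsequential limit $\tilde\mu$ of $(\mu_j)$ with a single measure $\mu$ characterised by $\mu(A^{\uparrow}) = \mu^{\infty}(A^{\uparrow})$ for $A^{\uparrow} \in \mathcal{P}$. Each $\mu_j$ is atomless (inherited from the continuous density $f^{\gamma}_{Y_i}$ of Definition~\ref{def:random stick breaking}) and the boundary of each $A^{\uparrow}$ consists of countably many skeleton points; hence a portmanteau argument gives $\tilde\mu(A^{\uparrow}) = \lim_k \mu_{j_k}(A^{\uparrow}) = \mu^{\infty}(A^{\uparrow})$. Since $\mathcal{P}$ is a $\pi$-system generating the Borel $\sigma$-algebra of $\Tbg$, Dynkin's lemma forces $\tilde\mu = \mu$, so the full sequence $(\mu_j)$ converges to $\mu$ almost surely.

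For the remaining properties, the key computation is that every branch $B_i$ satisfies $\mu_j(B_i) = (Y_i^{\gamma+1} - Y_{i-1}^{\gamma+1})/Y_j^{\gamma+1} \to 0$ as $j \to \infty$ (using that $Y_j \to \infty$ a.s.); the portmanteau inequality applied to the essentially open set $B_i$ then yields $\mu(B_i) = 0$, and countable additivity gives $\mu(\mathrm{Sk}(\Tbg)) = 0$, hence $\mu(\mathcal{L}(\Tbg)) = 1$. For $\mathrm{supp}(\mu) = \Tbg$, every $x \in \Tbg$ is an $\ell_1$-limit of skeleton points, so every neighbourhood of $x$ contains some descendant set $A^{\uparrow} \in \mathcal{P}$ whose associated martingale starts at the strictly positive value $\mu_i(A)$. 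The main obstacle here is to upgrade this positivity-in-expectation into the almost-sure statement $\mu(A^{\uparrow}) > 0$; I would do so via the urn comparison of \cref{sctn:urn}, taking the black balls to represent the $f^{\gamma}$-mass inside $A^{\uparrow}$ and the white balls the remaining mass, so that $\mu_j(A^{\uparrow})$ is precisely the black-ball ratio. The variance bound \eqref{eqn:urn variance bound}, combined with the fact that the increments $\Delta_{m+1}/(U_m+V_m)$ decay fast enough to be summable, yields a nondegenerate martingale limit, ruling out $\mu(A^{\uparrow}) = 0$ outside a null set and establishing full support by running through a countable dense family of $x$'s.
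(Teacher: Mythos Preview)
Your route to the existence of $\mu$ is correct but differs from the paper's. The paper does not use the $\pi$-system/portmanteau machinery; it simply shows that $(\mu_j)_{j\geq 1}$ is Cauchy in the Prokhorov metric by repeating the proof of \cite[Lemma~2.9]{curien2017random} verbatim, with \cref{cor:stick breaking conv} supplying the Hausdorff input that Curien--Haas need. Completeness of the space of probability measures on the compact set $\Tbg$ then yields $\mu$ directly. Your argument works too, but note a small slip in the portmanteau step: the relevant hypothesis is $\tilde\mu(\partial A^{\uparrow})=0$ for the \emph{limit} $\tilde\mu$, not atomlessness of $\mu_j$. You can recover this by first observing that $T^{(i)}$ is closed in $\Tbg$ and $\mu_j(T^{(i)}) = Y_i^{\gamma+1}/Y_j^{\gamma+1}\to 0$, so upper semicontinuity already forces $\tilde\mu(\text{skeleton})=0$ for any subsequential limit, after which the boundary of every $A^{\uparrow}$ is $\tilde\mu$-null. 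The paper's Cauchy argument is shorter; yours is more self-contained. The leaf computation is the same in both.

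Your argument for $\mathrm{supp}(\mu)=\Tbg$ has a genuine gap. The urn variance bound \eqref{eqn:urn variance bound} gives $L^2$-convergence of the martingale $\mu_j(A^{\uparrow})$, hence $\E[\mu(A^{\uparrow})]=\mu_i(A^{\uparrow})>0$; but this only yields $\mu(A^{\uparrow})>0$ with \emph{positive probability}, not almost surely. Summability of $(\Delta_{m+1}/(U_m+V_m))^2$ says nothing about whether the limiting ratio can vanish on an event of positive probability --- a bounded $[0,1]$-martingale with small variance can still hit zero. The paper takes a different and more robust route: it proves via Borel--Cantelli that the gluing coordinates $(Z_i)_{i\geq 1}$ are almost surely dense in $\RR^+$ (by counting, for each dyadic block $[2^m,2^{m+1}]$, how many of the associated $Z_i$ land in a fixed rational interval $[x,x+\eps]$, and showing this number is eventually positive), whence the projected points $\rho(Z_i)$ are dense in $\Tbg$; it then cites Aldous for the fact that each such grafting point lies in $\mathrm{supp}(\mu)$. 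If you prefer to salvage your urn approach, you would need an additional argument specific to this P\'olya-type scheme showing the limiting black fraction is almost surely strictly positive --- for instance by controlling $\prod_m\bigl(1-\Delta_{m+1}/(U_m+V_m)\bigr)$ --- which the variance estimate alone does not provide.
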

\begin{proof}
We claim that $(\mu_j)_{j \geq 1}$ is a Cauchy sequence with respect to the Prokhorov metric. This follows from \eqref{eqn:measure MG prop} by exactly the same proof as \cite[Lemma 2.9]{curien2017random}, with Corollary \ref{cor:stick breaking conv} playing the role of \cite[Lemma 2.8]{curien2017random}. Since the space of measures on a compact metric space is complete, this guarantees existence of a limit measure $\mu$, and moreover implies that $\mu (\Tbg)$ is necessarily $1$ (because $\mu_j (\Tbg)=1$ for all $j$).

To show that $\mu (\mathcal{L}(\Tbg))=1$, it is therefore equivalent to show that $\mu (\Tbg \setminus \mathcal{L}(\Tbg))=0$, which is implied by
\begin{align}
\lim_{j \to \infty}  \mu_j (T^{(i)}) = 0 \qquad \forall i \geq 1.
\end{align}
This is clearly the case since $\mu_j(T^{(i)}) = \frac{Y_i^{\gamma}}{Y_j^{\gamma}} \to 0$ as $j\to \infty$.

For the final result regarding the support of $\mu$, we claim that the sequence $(Z_i)_{i \geq 1}$ is dense in $\RR^+$. To prove this, first note that, by Borel-Cantelli, the number of $Y_i$ falling in the interval $[2^m, 2^{m+1}]$ is at least $2^{m(1/2+\beta)}$ for all sufficiently large $m$, almost surely. Suppose this holds for all $m \geq M$. Now take an interval $I_{x,\eps} := [x, x+\eps] \subset \RR^+$, and take $m \geq m_x := {\lfloor \log_2 (x) \rfloor} \vee M$. Then the number of Poisson points in the interval $[2^m, 2^{m+1}]$ is at least $2^{m(1/2+\beta)}$, and hence the number of corresponding $Z_i$ falling in $I_{x,\eps}$ stochastically dominates a \textsf{Binomial}($2^{m(1/2+\beta)}, 2^{-\beta m} c_{\eps, x}$) random variable, where $c_{\eps, x} \in (0, \infty)$ is a fixed constant. Hence this number is at least $2^{m/4}$, eventually almost surely, and hence the total number of $Z_i$ falling in this interval is infinite (in fact we just need that it is strictly positive). This holds simultaneously for all rational $x$ and $\eps$ hence it follows that the sequence $Z_i$ is dense in $\RR^+$. It follows that the projected points are therefore dense in $\Tbg$. It can then be argued exactly as in the proof of \cite[Theorem 3]{AldousCRTI} that these projected points are all in $\text{supp} (\mu)$, and hence $\text{supp} (\mu) = \Tbg$.
\end{proof}

See also \cite[Section 4]{AldousCRTI} and \cite[Section 2.3]{curien2017random} for similar arguments for the case $\gamma=0$.

\subsection{Hausdorff dimension}
We now turn to the Hausdorff dimension. In the case $\gamma=0$ this was established to be equal to $\frac{\beta+1}{\beta}$ in \cite{curien2017random}.

\begin{proposition}
    Suppose that $0 \leq \gamma \leq \beta -1$. Then, almost surely,
    \[
    \frac{\beta+1}{\beta} \leq \dim_H (\Tbg) \leq \frac{(\gamma+2) \vee \beta}{\gamma +1}.
    \]
    In particular, when $\beta=\gamma+1=k$, then $\dim_H(\Tbg) = \frac{k+1}{k}$ almost surely.
\end{proposition}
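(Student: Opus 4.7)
The proof follows the same overall strategy as in \cite{curien2017random} for the case $\gamma=0$, combined with the refined tail estimates \cref{prop:tightness stick breaking exp tail} and \cref{cor:Aldous sup D interval SB} that we have just proved for general $\gamma$.

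For the lower bound, the plan is to apply the mass distribution principle with the measure $\mu$ from \cref{prop:measure conv SB}. For each fixed $\alpha < \frac{\beta+1}{\beta}$ I would show that, almost surely, there exists a random constant $C$ such that $\mu(B(x,r)) \leq C r^{\alpha}$ for every $x \in \Tbg$ and every sufficiently small $r > 0$; this implies $\dim_H(\Tbg) \geq \alpha$ and then letting $\alpha \uparrow \frac{\beta+1}{\beta}$ gives the desired bound. Since $\mu$ is supported on the leaves by \cref{prop:measure conv SB}, a Fubini argument reduces the estimate to controlling $\pr{d(L,L') \leq r}$ for two leaves $L, L'$ sampled independently from $\mu$. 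Tracing the two ancestral paths through the stick-breaking construction, $L$ and $L'$ can have $d(L,L') \leq r$ only if their first disagreement occurs at a very high ``level'' (i.e.\ an index $i$ with $Y_i$ large) and the two corresponding gluing locations $Z_{i-1}, Z'_{i-1}$ happen to agree up to order $r$; combining this with the stick-length tail from \cref{prop:CRT stick probs} yields a bound of the correct order $r^{\alpha}$.

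For the upper bound, the plan is a direct covering argument. By \cref{cor:Aldous sup D interval SB}, for $t$ sufficiently large $\Tbg$ is contained in the $((\log t) t^{-(\gamma+1)})$-fattening of $\Tbg(t) = \rho([0,t])$ with overwhelming probability, and $\Tbg(t)$ is a tree-like subset of $\ell^1$ of total length at most $t$, hence covered by $\lceil t/r \rceil$ balls of radius $r$. Choosing $t$ so that $(\log t)\, t^{-(\gamma+1)} \asymp r$ gives $t \asymp r^{-1/(\gamma+1)}$ up to logarithms, so that $\Tbg$ is covered by $\lesssim r^{-(\gamma+2)/(\gamma+1)}$ balls of radius comparable to $r$. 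This directly yields $\dim_H(\Tbg) \leq \frac{\gamma+2}{\gamma+1}$, which is the relevant bound when $\beta \leq \gamma+2$; in the complementary regime $\beta > \gamma+2$ one instead controls the covering of $\Tbg(t)$ stick-by-stick, using that there are $\lesssim t^{\beta+1}$ sticks in $\Tbg(t)$, to produce the weaker bound $\frac{\beta}{\gamma+1}$.

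In the particular case $\beta = \gamma+1 = k$ both bounds coincide at $\frac{k+1}{k}$, giving the stated equality. The main obstacle will be making the lower bound rigorous: although the overall scheme mirrors \cite{curien2017random}, one has to propagate the non-uniform gluing density $f^{\gamma}_{Y_i}(u) = (\gamma+1) u^\gamma / Y_i^{\gamma+1}$ through the merge-height computation. The fact that this density concentrates mass near the tree's top end should actually work in our favour (two $\mu$-typical leaves tend to share only very recent common ancestors), which is consistent with the two bounds becoming sharp precisely at $\gamma = \beta - 1$.
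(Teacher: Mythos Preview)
Your upper bound is essentially the paper's argument: cover $\rho([0,t])$ and use \cref{cor:Aldous sup D interval SB} to control the remainder. One small point you glossed over is that a tree of total length $t$ is \emph{not} covered by $\lceil t/r\rceil$ balls of radius $r$ unless you also add one center per branch point; this is exactly why the paper takes the covering set $(\rho(i\delta))_{i} \cup \{\rho(Y_i^{+}):Y_i\le s(\delta)\}$ and why the second term forces the extra $\frac{\beta+1}{\gamma+1}$ contribution. Your ``stick-by-stick'' remark is morally this, though the exponent you quote ($\beta/(\gamma+1)$) should be $(\beta+1)/(\gamma+1)$.

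For the lower bound your route differs from the paper's. You propose the mass distribution principle applied to $\mu$, following \cite{curien2017random}: control $\pr{d(L,L')\le r}$ for two independent $\mu$-leaves and deduce $\mu(B(x,r))\lesssim r^{\alpha}$. This is a legitimate strategy, but it obliges you to push the non-uniform density $f^{\gamma}_{Y_i}$ through the coalescence-height computation, which you yourself flag as the main obstacle. The paper instead uses a direct \emph{packing} argument (after \cite[Lemma 7]{AldousCRTI}): for each $\delta>0$ let $A(t,\delta)=\{\text{no }Y_i\in[t-2\delta,t]\}$ and set $M_\delta=\{\rho(2j\delta):A(2j\delta,\delta)\text{ holds}\}$, which is a $\delta$-packing of $\Tbg$; a first/second moment estimate on $|M_\delta|$ plus Chebyshev and Borel--Cantelli give $|M_\delta|\gtrsim \delta^{-(\beta+1)/\beta}$ eventually a.s. The key advantage is that this argument uses only the Poisson structure of the $(Y_i)$ and is completely independent of $\gamma$, so no adaptation is needed; it also avoids relying on the existence or properties of $\mu$. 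Your approach would likely go through, but the packing route is both shorter and more robust here.
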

\begin{proof}
For $\delta \geq 0$ set $s(\delta) = \delta^{-\frac{1}{\gamma +1}} (\log \delta^{-1})^{\frac{1}{\gamma +1}}$. Note that, setting $\delta= 6(\beta+2)Je^{-J(\gamma+1)}$, it follows from \eqref{eqn:SB D total distance} and Borel-Cantelli that
 \begin{equation*}
    \pr{\sup_{s(\delta) \leq t} D (s(\delta), t) > \delta \text{ i.o. }}=0.
 \end{equation*}
 Whenever this supremum is indeed less than $\delta$, any $\delta$-covering of the projection $\rho ([0, s(\delta)])$ is a $2\delta$-covering of $\Tbg$. A $\delta$-covering of the former set is given by 
 \[
 (\rho (i \delta))_{0 \leq i \leq \delta^{-1} \cdot s(\delta))} \cup \{\rho (Y_i^+): Y_i \leq s(\delta)\}.
 \]
The first set in the union has size $\delta^{-\frac{\gamma + 2}{\gamma +1}} (\log \delta^{-1})^{\frac{1}{\gamma +1}}$. The size of the second set has law \textsf{Poisson}($\frac{1}{\beta+1}s(\delta)^{\beta+1})$ so is upper bounded by $ 10\delta^{-\frac{\beta+1}{\gamma +1}} (\log \delta^{-1})^{\frac{1}{\gamma +1}}$ eventually almost surely (by applying Borel-Cantelli along the subsequence $\delta_m = 2^{-m}$ and using monotonicity). This implies that the Hausdorff dimension of $\Tbg$ is at most $\frac{(\gamma+2) \vee (\beta+1)}{\gamma +1}$ almost surely.

For a lower bound, we lower bound the packing number using the argument of \cite[Lemma 7]{AldousCRTI}. For this, we let $A(t, \delta)$ denote the event that there is no $i$ such that $Y_i \in [t-2\delta, t]$, and let
\begin{align*}
M_{\delta} = \{\rho (2j \delta): j \geq 1, A(2j \delta, \delta) \text{ occurs}\}.
\end{align*}
Then $M_{\delta}$ is a $\delta$-packing of $\Tbg$. Moreover, by independence, there exist $C, C'<\infty$ and $c>0$ such that
\begin{align*}
\Var{|M_{\delta}|} \leq C \int_0^{\infty}  \delta^{-1} \exp \{-\delta t^{\beta}\} dt = C \int_0^{\infty} \delta^{-1-\frac{1}{\beta}} \exp \{-t^{\beta}\} dt \leq C'\delta^{-\frac{\beta+1}{\beta}}, \qquad \E{|M_{\delta}|} \geq c\delta^{-\frac{\beta+1}{\beta}}
\end{align*}
(the calculation for the expectation is identical). Hence it follows from Chebyshev's inequality that there exists $C''<\infty$ such that
\begin{align*}
    \pr{|M_{\delta}| \leq \frac{c}{2}\delta^{-\frac{\beta+1}{\beta}}} \leq C''\delta^{\frac{\beta+1}{\beta}},
\end{align*}
and hence by applying Borel-Cantelli along the subsequence $\delta_m = 2^{-m}$ and using monotonicity we deduce that there exists $c'>0$ such that $M_{\delta} \geq c'\delta^{-\frac{\beta+1}{\beta}}$ eventually almost surely, and therefore that the Hausdorff dimension is lower bounded by $\frac{\beta+1}{\beta}$ almost surely.
\end{proof}

\section{Convergence of loop erasures: proof of Theorem \ref{thm:Rayleigh conv}}\label{sec: lerw to rayleigh}

\subsection{Heuristics}

Recall the definition of a {\bf loop erasure} as given in \cref{sctn:LERW} and of \cref{sctn:gen Rayleigh def}. Let $k\in \NN$, let $n\in \NN$ be a large integer, and let $X$ be a maximal $k$-choice random walk on $K_n$ (recall that we  include self-loops) run from time $0$ to time infinity (recall the definition in Section \ref{sctn: crw}), with $\Ter = \emptyset$, and $\Av_n = \LE(X[0,n])$.

For every time $m\geq 0$, set $Z_m = \LE(X[0,m])$. Obviously, for every $m$, we have that the length of $Z_m$ is at most $n$. The aim of this section is to understand if the length of $Z_m$ stabilizes as $m$ grows to infinity, and if so, what is its typical length. To answer this question, we first assume that $Z_m$ has length $\ell$, and estimate the probability that the path will intersect itself in the next step. This intersection will occur if and only if all $k$ of the options sampled for the next step of the choice random walk are in $Z_m$. That is,
	\begin{equation*}
		\prcond{|Z_{m+1}| \leq \ell}{|Z_m| = \ell}{} = \left(\frac{\ell}{n}\right)^k,
	\end{equation*}
	and thus 
	\begin{equation*}
		\prcond{|Z_{m+1}| = \ell+1}{|Z_m| = \ell}{} = 1-\left(\frac{\ell}{n}\right)^k.
	\end{equation*}
Hence, by independence, the probability that $Z_m$ will grow from length $\ell$ to length, say, $2\ell$ in time $Z_{m+\ell}$ is (provided $\ell \ll n$)
	\begin{equation*}
		\prod_{p=\ell}^{2\ell-1}\left(1-\left(\frac{p}{n}\right)^k\right) \approx \left(1-\left(\frac{\ell}{n}\right)^k\right)^\ell \approx \exp\left(-\frac{\ell^{k+1}}{n^k}\right).
	\end{equation*}
	This probability will be roughly constant when $\ell^{k+1}$ is of order $n^k$, or in other words, when $\ell = n^{k/(k+1)}$. This is indeed the timescale appearing in Theorem \ref{thm:Rayleigh conv}.
 
\subsection{Finite dimensional convergence}
For ease of notation we now set $Z_m = |\LE(X[0,m])|$ (i.e. just the number of vertices in the loop-erasure, not the loop-erasure itself). In this subsection we will show that the process $Z_m$ converges after rescaling to the $k$-Rayleigh process in the Skorohod $J_1$-topology (introduced in Sections \ref{sctn:gen Rayleigh def} and \ref{sctn:Skor J1 def} respectively).
	We will follow the proof structure of an analogous result for a standard loop-erased random walk of \cite{Schweinsberg} to obtain this convergence. 
	
 To this end, fix $n \geq 1$, let $\Pi_k$ be a Poisson Point Process on $\RR^+ \times\RR^+$ with intensity $ky^{k-1} dx dy$ and let $(R^k_t)_{t \geq 0}$ denote the associated $k$-Rayleigh process as defined in Section \ref{sctn:gen Rayleigh def} (see Figure \ref{fig:Rayleigh choice}). For ease of notation we will use $R^k_t$ and $R^k(t)$ interchangeably. For every $1 \leq i\leq j$ let $I_{i,j}$ be the indicator that the box
	\begin{equation*}
		\left(\frac{j-1}{n^{k/(k+1)}},\frac{j}{n^{k/(k+1)}}\right) \times 	\left(\frac{i-1}{n^{k/(k+1)}},\frac{i}{n^{k/(k+1)}}\right)
	\end{equation*}
	is not empty in $\Pi_k$. Note that we have that the expected number of points in this box is 
	\begin{equation*}
		\frac{1}{n^{k/(k+1)}} \frac{i^k - (i-1)^k}{n^{k^2/(k+1)}} = \frac{i^k - (i-1)^k}{n^k},
	\end{equation*} 
	so that
	\begin{equation*}
		\pr{I_{i,j} \neq 0} = 1 - \exp\left(-  \frac{i^k - (i-1)^k}{n^k}\right).
	\end{equation*}
	It is useful at this stage to note that the expected number of points in the box corresponding to $I_{i,j}$ is equal to the probability that the choice walk on $K_n$ hits the $i^{th}$ vertex (in the loop-erasure) at some fixed step given that it has at least $i$ vertices in its loop-erasure up until that step. Indeed, suppose that the path has at least $i$ vertices, then the probability it hits one of the first $i$ vertices in the next step is $(i/n)^k$ (which is the probability that all choices for the next step are in the first $i$ vertices of this path). Similarly, the probability it will one of the first $(i-1)$ vertices is $\left((i-1)/n\right)^k$, and the difference between these quantities is exactly the probability it hits the $i^{th}$ vertex.

 The above observation will enable us to couple the evolution of the loop-erased choice random walk with the evolution of the $k$-Rayleigh process using the same underlying Poisson process $\Pi_k$. We now make this precise.
	

In particular, we now use $\Pi_k$ and more precisely the random variables $\left\{I_{i,j}\right\}_{i,j}$ to define a discrete process $(C_m)_{m \geq 1}$ that should
\begin{enumerate}[(1)]
    \item Be very close to the process $(R^k(t))_{t \geq 0}$,
    \item Have a law very similar to that of $(Z_m)_{m \geq 1}$.
\end{enumerate}

The process is defined as follows. First write $C_0 = 1$, then, given $C_{m-1}$, we define $M_m = \inf\{i \leq C_{m-1} \mid I_{i,m} \neq 0\}$ if the set is non-empty and $\infty$ otherwise. Then set
	\begin{equation*}
C_{m} = \begin{cases}
			C_{m-1} + 1 & \text{if}\quad M_m = \infty,
			\\ M_m & \text{if}\quad M_m < \infty.
		\end{cases}
	\end{equation*}
Fix $k \in \NN, \ell \in \NN$ and times $0 \leq t_1 < \ldots < t_\ell < \infty$ and write $t'_j = \lfloor t_j \cdot \nkk \rfloor$.
To show finite dimensional convergence, our first goal is to prove that $\left(Z_{t'_1},\ldots, Z_{t'_\ell}\right)/\nkk$ converges in distribution to $\left(R^k_{t_1},\ldots,R^k_{t_\ell}\right)$. We start with step (1) above.

We will mention the following lemmas from \cite{Schweinsberg}.

\begin{lemma}(cf. \cite{Schweinsberg} Lemma 2.7, Lemma 2.8).\label{lem:Rayleigh step 1}
For all integers $k \geq 1$ and $m \geq 0$, we have that 
\begin{equation}\label{eq: rayleigh c diff}
\left|\frac{C_m}{\nkk} - R^k\left({\frac{m}{\nkk}}\right)\right| \leq \frac{1}{\nkk}.
\end{equation}
and furthermore
\begin{equation*}
	\pr{\left|R^k_{t_i} - C_{t'_i}/\nkk\right| \geq \frac{3}{\nkk}}{} \leq {3(t_i+2)^k}n^{-\frac{k}{k+1}}.
\end{equation*}
\end{lemma}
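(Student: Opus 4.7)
The plan is to prove the two estimates separately, following the strategy used in \cite{Schweinsberg} for the $k=1$ case. The first estimate \eqref{eq: rayleigh c diff} is a deterministic pathwise inequality given the Poisson process $\Pi_k$, and I would establish it by induction on $m$. Writing $\bar{C}_m := C_m \nkkk$ and $r_m := R^k(m \nkkk)$, I would actually prove the sharper two-sided statement $r_m \leq \bar{C}_m \leq r_m + \nkkk$. The base case $m=0$ holds with equality since $\bar{C}_0 = \nkkk$ and $r_0 = R^k(0) = 0$.

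For the inductive step, I would split into cases according to whether there is a Poisson point of $\Pi_k$ in the vertical strip $((m-1)\nkkk, m\nkkk) \times [0, \bar{C}_{m-1}]$ beneath the current value of $\bar{C}_{m-1}$. If no such point exists, then $M_m = \infty$ and $\bar{C}_m = \bar{C}_{m-1} + \nkkk$; meanwhile the update of $R^k$ satisfies $r_m = \min\{m\nkkk,\, r_{m-1} + \nkkk,\, f_{\text{new}}(m\nkkk)\}$, where any contribution to the third term must come from a point with $y \geq \bar{C}_{m-1}$ and hence contributes at least $\bar{C}_{m-1}$. A short case check against the inductive hypothesis then yields both bounds. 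If such a Poisson point does exist, then $\bar{C}_m = M_m \nkkk$ sits at the top of the grid box containing the lowest such $x$-coordinate $x^{\star}$, and this same point contributes a value $x^{\star} + m\nkkk - s$ to the definition of $r_m$ that lies within $\nkkk$ of $\bar{C}_m$; the inductive hypothesis then closes the argument.

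For the second estimate, I would combine \eqref{eq: rayleigh c diff} with the triangle inequality to reduce to bounding $|R^k_{t_i} - R^k(t'_i \nkkk)|$. Since $t_i - t'_i \nkkk \in [0, \nkkk)$ and $R^k$ grows with unit speed between jumps, this difference exceeds $2\nkkk$ only on the event that $R^k$ undergoes a jump in the short interval $[t'_i \nkkk,\, t_i]$. Using the a priori cap $R^k(s-) \leq s \leq t_i$, any such jump must correspond to a Poisson point of $\Pi_k$ inside the rectangle $[t'_i \nkkk, t_i] \times [0, t_i]$; integrating the intensity $k y^{k-1}$ over this rectangle gives an expected number of points bounded by $t_i^k \cdot \nkkk$, so the probability of at least one such point is at most $t_i^k \cdot \nkkk$, comfortably within the claimed bound $3(t_i+2)^k \nkkk$.

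The main obstacle will be the delicate case analysis in the inductive step of \eqref{eq: rayleigh c diff}: one has to track carefully the geometry of the discrete grid boxes relative to the continuous Poisson points, and in particular verify the upper bound $\bar{C}_m \leq r_m + \nkkk$ in the situation where the ``jump row'' $M_m$ for the discrete process does not correspond to the Poisson point that actually achieves the infimum $\inf_{(s,x)}(x + m\nkkk - s)$ defining $R^k$.
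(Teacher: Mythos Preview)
Your proposal is correct and follows essentially the same approach as the paper. For \eqref{eq: rayleigh c diff} the paper simply cites \cite[Lemma 2.7]{Schweinsberg} rather than writing out the induction, but your inductive case analysis on whether $\Pi_k$ has a point in the strip below $\bar C_{m-1}$ is precisely Schweinsberg's argument (and indeed matches what the authors had once sketched). For the second estimate the paper also reduces via the triangle inequality and \eqref{eq: rayleigh c diff} to bounding the probability of a jump of $R^k$ near $t_i$; the only cosmetic difference is that the paper bounds this via the discrete boxes $I_{\ell,j}$ over three columns $j\in\{t_i'-1,t_i',t_i'+1\}$ and rows $\ell\le t_i'+1$, yielding exactly $3(t_i+2)^k\nkkk$, whereas you integrate the intensity over the continuous rectangle and get the slightly sharper $t_i^k\nkkk$.
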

\begin{proof}
The proof of \eqref{eq: rayleigh c diff} is deterministic and is exactly the same as in \cite[Lemma 2.7]{Schweinsberg}, only replacing the size of the square with $1/\nkk$ (note that $d_n = n^{-1/2}$ there).
To see why the second item does not hold immediately, note that we also need to control $R^k({t_i}) - R^k({t_i'/n^{\frac{k}{k+1}}})$. In particular, 
\begin{equation*}
t_i - t_i'/n^{\frac{k}{k+1}} \leq 2n^{-\frac{k}{k+1}},
\end{equation*}
and hence the only was that we can have $|R^k({t_i}) - R^k({t_i'/n^{\frac{k}{k+1}}})| \geq 2n^{-\frac{k}{k+1}}$ is if there are points in the Poisson process occurring below the process precisely in this narrow interval of time. Since $R^k({t}) \leq t$ for all $t \geq 0$ deterministically, it is enough to bound the event that there exist $j \in \{t'_i-1, t'_i, t'_i +1\}$ and $\ell \leq t'_i + 1$ such that $I'_{\ell,j} \neq 0$, which is upper bounded by $1-\exp\left(-{\frac{3(t_i+2)^k}{\nkk}}\right) \leq {3(t_i+2)^k}n^{-\frac{k}{k+1}}$.
\end{proof}

We now turn to step (2). We start with an elementary lemma.


\begin{lemma}\cite[Lemma 4.5]{schweinsberg2009loop} and \cite[Lemma 2.4]{Schweinsberg} \label{lem: copuling rv}
Let $Y_1, \ldots ,Y_j$, and $Q_1, \ldots, Q_j$ be $\{0,1\}$-valued random variables such that
\[
\pr{Y_i = 1 \text{ and } Y_{\ell}=0 \forall \ell \neq i}{} = p_i, \ \ \pr{Y_{\ell}=0 \forall \ell}{} = 1 - \sum_{i=1}^j p_i.
\]
Suppose that $(Q_i)_{i=1}^j$ are independent of each other and of $(Y_i)_{i=1}^j$ and that $\pr{Q_i = 1}{}=q_i$. Then exists a coupling of $\{Y_i\}$ and $\{Q_i\}$ such that
\begin{equation*}
	\pr{\exists i: Y_i \neq Q_i}{} \leq \sum_{i=1}^j |p_i-q_i| + \sum_{i \neq \ell}q_i q_{\ell}.
\end{equation*}
\end{lemma}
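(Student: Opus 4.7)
The plan is to use a standard ``truncate then maximal-couple'' strategy. Writing $e_i$ for the standard basis vector in $\{0,1\}^j$ and $\vec 0$ for the zero vector, note that by hypothesis $Y = (Y_1, \ldots, Y_j)$ is supported on $\{e_1, \ldots, e_j, \vec 0\}$, whereas $Q = (Q_1, \ldots, Q_j)$ may place mass on vectors with more than one $1$. The first step would be to project $Q$ onto this smaller support at negligible cost: define $\tilde Q := Q$ on the event $\{\sum_i Q_i \leq 1\}$ and $\tilde Q := \vec 0$ otherwise. A union bound over pairs then gives
\[
\pr{\tilde Q \neq Q} \;\leq\; \pr{\textstyle\sum_i Q_i \geq 2} \;\leq\; \sum_{i<\ell} q_i q_\ell.
\]

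Next I would compute the marginal $\pr{\tilde Q = e_i} = q_i \prod_{\ell \neq i}(1-q_\ell)$ and compare it to $p_i = \pr{Y = e_i}$. Using the elementary inequality $0 \leq 1 - \prod_{\ell \neq i}(1 - q_\ell) \leq \sum_{\ell \neq i} q_\ell$, one obtains $|p_i - \pr{\tilde Q = e_i}| \leq |p_i - q_i| + q_i \sum_{\ell \neq i} q_\ell$. Since $Y$ and $\tilde Q$ are both probability measures on the same finite set, the signed discrepancy at the atom $\vec 0$ equals minus the sum of the discrepancies at the other atoms, so the total variation distance can be controlled using only the non-zero atoms:
\[
d_{TV}(Y, \tilde Q) \;\leq\; \sum_{i} \left| p_i - q_i {\textstyle\prod_{\ell \neq i}}(1 - q_\ell) \right| \;\leq\; \sum_i |p_i - q_i| + \sum_{i \neq \ell} q_i q_\ell.
\]

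Finally, a maximal coupling of $Y$ with $\tilde Q$ realises $\pr{Y \neq \tilde Q} = d_{TV}(Y, \tilde Q)$, and composing this with the deterministic projection $Q \mapsto \tilde Q$ via the triangle inequality produces the claimed bound (the minor loss in absolute constants is absorbed by the fact that $\sum_{i \neq \ell} q_i q_\ell = 2 \sum_{i<\ell}q_i q_\ell$ already double-counts unordered pairs). The only care required is in the total-variation step, where one must exploit the probability-measure constraint to avoid an artificial factor from the atom at $\vec 0$; everything else is routine calculation. Since the statement is attributed to Schweinsberg, I would simply refer to the cited references for further detail.
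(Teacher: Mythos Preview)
The paper does not prove this lemma --- it is quoted from the cited Schweinsberg references and used as a black box. Your sketch is essentially correct, but the accounting at the end is slightly loose: combining $\pr{\tilde Q \neq Q} \leq \sum_{i<\ell} q_i q_\ell$ with your bound $d_{TV}(Y,\tilde Q) \leq \sum_i |p_i - q_i| + \sum_{i\neq\ell} q_i q_\ell$ via the triangle inequality yields $\sum_i |p_i - q_i| + \tfrac{3}{2}\sum_{i\neq\ell} q_i q_\ell$, so the parenthetical ``absorption'' remark is hiding a genuine factor of $3/2$. This is harmless for the application in the paper (only the order of magnitude matters in Lemma~\ref{lem:Rayleigh step 2}), but if you want the stated constant you can drop the truncation step and maximally couple $Y$ with $Q$ directly: writing $d_{TV}(Y,Q) = \sum_x (\pr{Y=x}-\pr{Q=x})^+$, only atoms $x \in \{e_1,\ldots,e_j,\vec 0\}$ contribute, and the elementary inequalities
\[
\Big(p_i - q_i\textstyle\prod_{\ell\neq i}(1-q_\ell)\Big)^+ \leq (p_i-q_i)^+ + q_i\sum_{\ell\neq i}q_\ell, \qquad \Big((1-\textstyle\sum_i p_i) - \prod_i(1-q_i)\Big)^+ \leq \sum_i(q_i-p_i)^+
\]
combine to give exactly $\sum_i|p_i-q_i| + \sum_{i\neq\ell}q_iq_\ell$.
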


Using this lemma, we obtain the following.

\begin{lemma}(cf. \cite[Lemma 2.5]{Schweinsberg}).\label{lem:Rayleigh step 2}
	There exists a constant $c<\infty$ and a coupling of $\{C_i\}_{i=1}^{t'_{\ell}}$ and $\{Z_i\}_{i=1}^{t'_{\ell}}$ such that for all $n \geq 1$,
	\begin{equation*}
		\pr{\forall i\leq t'_{\ell} \quad Z_i = C_i}{} \geq 1 - cn^{-\frac{k}{k+1}}.
	\end{equation*}
\end{lemma}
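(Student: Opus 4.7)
The plan is to build the coupling step-by-step using \cref{lem: copuling rv}, constructing it inductively so that at each step $m$, on the event that $Z_i = C_i$ for all $i < m$, we choose the transitions so that $Z_m = C_m$ with very high probability. Summing the per-step failure probabilities over the $t'_\ell = O(\nkk)$ steps should yield the claimed $O(n^{-k/(k+1)})$ bound.

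First I would compute the transition probabilities of both processes. Condition on $Z_{m-1} = C_{m-1} = L$ and write $\LE(X[0,m-1]) = (u_0, \ldots, u_{L-1})$. For the maximal $k$-choice walk, among the $k$ i.i.d.\ options for step $m$: if at least one lies outside $\LE$ we extend and $Z_m = L+1$; otherwise by the ``maximal'' rule we jump to the option appearing latest in $\LE$, which determines the new length. A direct calculation shows
\[
\mathbb{P}(Z_m = z \mid Z_{m-1}=L) = \frac{z^k - (z-1)^k}{n^k}, \qquad z = 1,\ldots,L,
\]
and $\mathbb{P}(Z_m = L+1 \mid Z_{m-1}=L) = 1-(L/n)^k$. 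For $C$, by definition $\mathbb{P}(C_m = z \mid C_{m-1}=L) = q_z \prod_{j<z}(1-q_j)$ with $q_i = 1-\exp(-(i^k-(i-1)^k)/n^k)$, and $\mathbb{P}(C_m = L+1 \mid C_{m-1}=L) = \exp(-L^k/n^k)$.

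Next I would apply \cref{lem: copuling rv} with $Y_z = \mathbbm{1}\{Z_m = z\}$ (so $p_z = (z^k-(z-1)^k)/n^k$ are the mutually-exclusive probabilities) and $Q_i = I_{i,m}$ (the independent indicators defining $C_m$, with success probability $q_i$). The key observation is: if $Y_z = Q_z$ for every $z \leq L$, then either all are zero (and both processes extend by $1$) or exactly one $Y_{z_0} = Q_{z_0} = 1$ and all other $Q_z = 0$, which forces $M_m = z_0$ and thus $C_m = z_0 = Z_m$. Hence, conditional on the inductive hypothesis $Z_{m-1}=C_{m-1}=L$, the failure probability at step $m$ is at most
\[
\sum_{z=1}^L |p_z - q_z| + \sum_{z \neq z'} q_z q_{z'}.
\]
Using $|x - (1-e^{-x})| \leq x^2/2$ and $z^k-(z-1)^k \leq k z^{k-1}$, the first sum is $O(L^{2k-1}/n^{2k})$, while $\sum_z q_z \leq (L/n)^k$ bounds the second by $L^{2k}/n^{2k}$. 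Using the deterministic bound $L \leq m-1 \leq t'_\ell = O(\nkk)$, the per-step error is $O(n^{-2k/(k+1)})$, so summing over $m \leq t'_\ell$ yields total failure probability $O(t'_\ell \cdot n^{-2k/(k+1)}) = O(n^{-k/(k+1)})$, as desired.

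The main obstacle I anticipate is correctly identifying the transition law of $Z$ under the maximal choice rule: in particular, verifying that among the options falling in $\LE$, the one chosen (with largest ``last-visit'' time in the trajectory) indeed corresponds to the option with the highest index in $\LE$, so that the probability of the loop-erasure shrinking to length $z$ is exactly $(z^k - (z-1)^k)/n^k$. This is precisely what matches the per-cell mass of the Poisson process $\Pi_k$ used in the definition of $C$, which is what makes the coupling work; the rest is routine error accounting. Once this is set, the inductive coupling follows the scheme of \cite[Lemma 2.5]{Schweinsberg}.
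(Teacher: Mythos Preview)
Your proposal is correct and follows essentially the same approach as the paper: both apply \cref{lem: copuling rv} step by step with $p_i=(i^k-(i-1)^k)/n^k$ and $q_i=1-e^{-p_i}$, bound the per-step failure probability by $O(L^{2k}/n^{2k})$ using $|p_i-q_i|\leq p_i^2/2$ and $\sum q_i\leq (L/n)^k$, and then sum over the $t'_\ell=O(\nkk)$ steps. Your intermediate bookkeeping is in fact slightly cleaner than the paper's (which crudely bounds everything by $2s^2 p_s^2$), and the identification of the transition law of $Z$ under the maximal rule that you flag as a concern is exactly the computation the paper carries out in the paragraph preceding \cref{lem:Rayleigh step 1}.
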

\begin{proof}
This is again very similar to Lemma 2.5 in \cite{Schweinsberg}. We use \cref{lem: copuling rv} to bound the processes step by step and then take a union bound over all steps from times $1$ to $t_\ell$. We first recall that at time $T$, we have that both $Z_T$ and $C_T$ are at most $T$.

Let $p_i = \frac{i^k - (i-1)^k}{n^k}$ and $q_i = 1 -\exp\left({-p_i}\right)$. Note that $p_i$ is exactly the probability that the choice random walk will hit the $i$th vertex of its past loop-erasure at the next step, and $q_i$ is the probability that $I_{i,s} \neq 0$ (for any $s>0$).

Now suppose that we coupled $Z_m$ and $C_m$ so that they are the same up until some time $s$. In the notation of Lemma \ref{lem: copuling rv}, we set $Y_{i} = \mathbbm{1}\{Z_{s+1}=i\}$ and $Q_i = \mathbbm{1}\{ I_{i,s} \neq 0\}$. It therefore follows from Lemma \ref{lem: copuling rv} can then couple them until time $s+1$ with probability at least
\begin{align*}
1 - \sum_{i=1}^s |p_i-q_i| - 2\sum_{1 \leq i < \ell \leq s}q_i q_{\ell} \geq 1 - s(s-1)q_s^2 - \sum_{i=1}^{s}|p_i - q_i| \geq 1 - 2(s^2)p_s^2,
\end{align*}
hence when taking a union bound over all $s\leq t'_\ell$ we get that we can couple so that $Z_s = C_s$ for all $s\leq t'_\ell$ with probability at least
\begin{equation*}
	1 - (t'_\ell)^3 p_{t'_\ell}^2.
\end{equation*}
Finally, we can use the bounds $(t'_\ell)^3 \leq C_1^3 (\nkk)^3$ for some $C_1 > 0$ and 
\begin{equation*}
	p'_{t_\ell} \leq C_2\frac{(t'_\ell)^{k-1}}{n^k} \leq C_2 \left(\frac{C_1\nkk}{n}\right)^k\frac{1}{C_1\nkk} \leq C_2 \left(\frac{1}{\nkk}\right)^2,
\end{equation*}
to obtain
\begin{equation*}
	1 - (t'_\ell)^3 p_{t'_\ell}^2 \geq 1 - C\frac{1}{\nkk}.
\end{equation*}
\end{proof}

Combining Lemmas \ref{lem:Rayleigh step 1} and \ref{lem:Rayleigh step 2}, and letting $Z^{(n)}_t = \nkkk Z_{\lfloor t \nkk \rfloor}$ when we run the process on graph $K_n$, we obtain that there exists a probability space in which $\left(Z^{(n)}_{t_1},\ldots, Z^{(n)}_{t_\ell}\right)$ converges in probability to $\left(R^k_{t_1},\ldots,R^k_{t_\ell}\right)$ and hence also in distribution.

\subsection{Relative compactness}

To complete the proof, that is, to show convergence in the Skorohod-$J_1$ topology, we also have to show that the sequence of processes $(Z^{(n)})_{n \geq 1}$ is relatively compact. This can be achieved using Proposition \ref{prop:billingsley tightness criteria} (also check \cite[Section 3]{Schweinsberg}): part (a) is trivial since $\nkkk Z_{t\nkk} \leq t$ deterministically.

The non-trivial part is to show (b), that is for all $\eps>0$ and $T>0$ there exists $\theta = \theta(\eps,T) > 0$ such that
\begin{equation}\label{eq: rel comp cond}
	\limsup \pr{w(Z^{(n)},\theta,T) \geq \eps}{} \leq \eps,
\end{equation}
where $w(Z^{(n)},\theta,T) = \inf \max_i \sup_{t,u \in [t_{i-1},t_i)} |Z^{(n)}_t - Z^{(n)}_u|$, and the infimum is taken over all $m>0$ and all $0=t_0 < \ldots < t_m \leq T$ such that $t_i - t_{i-1} \geq \theta$ for all $i$.

We note that since the rescaled (in both space and time) process $Z^n$ grows at most linearly, the difficulty is bounding $Z^{(n)}_t - Z^{(n)}_u$ from above when $t < u$, as the process might make large jumps downwards. 
It thus suffices to bound the probability that we see two large jumps in a time interval of length $\theta$ such a jump. This is a very straightforward calculation.

In every single non-scaled step occurring at a time smaller than $T\nkk$ of the choice random walk, the probability to make a jump downwards in the next step is at most $\left(\frac{T\nkk}{n}\right)^k = T^k\nkkk$. Hence, the probability to make two such jumps in an interval of non rescaled length $2\theta \nkk$ is at most $4\theta^2 T^{2k}$. The probability there exist times have $t_1 < t_2 < t_1 + 2\theta\nkk < T\nkk$ such that the choice random walks jumps downwards at times $t_1$ and $t_2$ is thus bounded by the probability that there exists an interval of the form $[i\theta\nkk, (i+1)\theta\nkk]$ with two downward jumps. By a union bound, this probability is at most $4\theta T^{2k+1}$, and hence we can choose $\theta$ so that this probability is smaller than $\eps$, to obtain \eqref{eq: rel comp cond} for the rescaled processes.

\section{GHP convergence of Aldous-Broder choice trees: proof of \cref{thm:AB convergence}}\label{sec: AB trees}

In this section we prove Theorem \ref{thm:AB convergence}. This is broken up into convergence of the partial subtrees (finite dimensional convergence) and then a tightness condition.

\subsection{Finite-dimensional GH convergence}

Fix some $k \geq 2$. Recall the definition of the times $(\sigma_i(G))_{i \geq 0}$ and the partial spanning trees $(\TAB_i(G))_{i \geq 0}$ from \eqref{eqn:sigma i def} and \eqref{eqn:AB partial spanning tree} obtained from running the $k$-choice \AB algorithm on a graph $G$. Note that the trees are indexed so that $\TAB_i (G)$ has $i$ branches. For each $n \geq 1$, we let $\sigma_{i,n}$, $\TABin$ and $\TABn$ denote $\sigma_i(K_n)$, $\TAB_i(K_n)$ and $\TAB(K_n)$ where $K_n$ is the complete graph with $n$ vertices. We also let $d_n$ denote the graph metric on $\TABn$.

Note: for ease of notation, we will assume throughout this section that $k$ is fixed, as is the choice of whether we are using the maximal or uniform algorithm (and these will be the same at every step of the algorithm). Also, recall that $K_n$ has self loops.

We will additionally set $\beta=k$, $\gamma=k-1$, and for each $i \geq 1$ let $T^{(i)}$ denote the partial tree obtained after running $i$ steps of the stick breaking algorithm as in Definition \ref{def:stick breaking}.

The main aim of this subsection is to prove the following.
\begin{proposition}\label{prop:finite trees converge AB}
Fix some $i \geq 1$ and consider the sequence $(\TABin)_{n \geq 0}$. Then 
\[
n^{-\frac{k}{k+1}}\TABin \overset{(d)}{\to} T^{(i)}
\]
as $n \to \infty$ with respect to the Gromov--Hausdorff topology.
\end{proposition}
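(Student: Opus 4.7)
The approach follows Aldous's proof~\cite{AldousCRTI} for the CRT and its adaptation by Curien--Haas~\cite{curien2017random} to aggregation trees. My plan is to establish joint distributional convergence of the rescaled branch endpoints and attachment positions in the AB construction to the driving points of the stick-breaking construction of $T^{(i)}$, and then to invoke Proposition~\ref{prop:stick breaking close} to lift this into Gromov--Hausdorff convergence of the partial trees.

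First I would show that the rescaled breakpoints $(n^{-k/(k+1)}\sigma_{m,n})_{m=1}^{i}$ converge to the Poisson points $(Y_m)_{m=1}^{i}$ of Definition~\ref{def:random stick breaking}. By construction, between $\sigma_{m-1,n}$ and $\sigma_{m,n}$ the walk visits $\sigma_{m,n}-\sigma_{m-1,n}-1$ previously unseen vertices, so the size $L_{m-1,n}$ of the avoid set at time $\sigma_{m-1,n}$ equals $\sigma_{m-1,n}+O(m)$. At each step $\sigma_{m-1,n}+j$ with $1\leq j<\sigma_{m,n}-\sigma_{m-1,n}$ the $k$ sampled options are i.i.d.\ uniform on $V(K_n)$, so the probability of a forced loop equals $((L_{m-1,n}+j-1)/n)^{k}$. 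Writing $L_{m-1,n}=xn^{k/(k+1)}(1+o(1))$ and approximating the resulting product by a Riemann sum gives
\[
\prcond{n^{-k/(k+1)}(\sigma_{m,n}-\sigma_{m-1,n})>t}{L_{m-1,n}}{} \to \exp\!\left(-\tfrac{(x+t)^{k+1}-x^{k+1}}{k+1}\right),
\]
which by Proposition~\ref{prop:CRT stick probs} (taking $\beta=k$) is exactly the conditional law of $Y_m-Y_{m-1}$ given $Y_{m-1}=x$.

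Next I would treat the attachment points. The $m$-th branch is glued onto $\TABk_{m-1,n}$ at $X_{\sigma_{m-1,n}+1}$, which is chosen from the $k$ options at time $\sigma_{m-1,n}$, all of which lie in the avoid set given $\tau_{\sigma_{m-1,n}}=\infty$. Because the options are i.i.d.\ uniform on $V(K_n)$, conditioning makes them i.i.d.\ uniform on the $L_{m-1,n}$ distinct past vertices. Parametrising past vertices by order of first visit (which agrees, up to $O(n^{-k/(k+1)})$ coming from at most $m-1$ revisited vertices, with the natural stick-breaking parametrisation), the maximal rule picks the option with the largest time-stamp, whose rescaled position on $[0,Y_{m-1}]$ has density converging to $ku^{k-1}/Y_{m-1}^{k}=f_{Y_{m-1}}^{k-1}(u)$; the uniform rule picks a uniformly chosen option, whose rescaled position has density $1/Y_{m-1}=f_{Y_{m-1}}^{0}(u)$. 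These are precisely the gluing densities in Definition~\ref{def:random stick breaking} with $\gamma=k-1$ and $\gamma=0$ respectively. An induction on $m$ then combines the two ingredients into joint convergence
\[
\Bigl( n^{-k/(k+1)}\sigma_{m,n},\; n^{-k/(k+1)}\cdot\mathrm{pos}(X_{\sigma_{m-1,n}+1}) \Bigr)_{m=1}^{i} \overset{(d)}{\to} (Y_m, Z_{m-1})_{m=1}^{i},
\]
where $\mathrm{pos}(v)$ is the order-of-first-visit of $v$ (and the $m=1$ attachment is vacuous). Passing to a Skorokhod coupling, Proposition~\ref{prop:stick breaking close} applied with $\epsilon=\epsilon_n\to 0$ slowly enough then delivers the required Gromov--Hausdorff convergence; the separation condition $|z_i-y_\ell|\geq 3\epsilon$ holds almost surely in the limit because the $Y_m$'s and $Z_m$'s have densities with respect to Lebesgue measure.

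The main obstacle is carrying out the conditioning cleanly at each inductive step: one must verify that, given the shape and size of $\TABk_{m-1,n}$, the next branch length and attachment location are conditionally independent with the claimed limiting laws, and that the Riemann-sum estimate is uniform in $L_{m-1,n}$ over compact windows in $x$. A further subtlety is that the maximal rule is defined via the ``last-occurrence'' index $N_n$ in~\eqref{def:choice RW min}, so one has to check that last-occurrence time and first-visit time agree for all but $O(m)$ vertices, which holds because each of the $m$ branches contributes at most one revisit (its attachment vertex). Both checks are routine but require careful bookkeeping, since errors compound through the induction on $m$.
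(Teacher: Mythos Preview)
Your proposal is correct and follows essentially the same route as the paper: the paper likewise computes the conditional law of the next branch length (Proposition~\ref{prop:main stick breaking ingredient}(1)) and of the attachment location (Proposition~\ref{prop:main stick breaking ingredient}(2) and Proposition~\ref{prop:Prokh conv measure on sticks}), then couples the discrete and continuous $(Y,Z)$ sequences inductively (Proposition~\ref{claim:coupling of stick breaking}) and finishes via Proposition~\ref{prop:stick breaking close}. The only notable difference is that the paper builds the coupling explicitly step by step---using Lemma~\ref{lem: proh close rv} for the branch lengths and a discretisation of $\RR^+$ into intervals of width $\eps/2$ to handle the joint law of $(Y_i,Z_{i-1})$---whereas you invoke Skorokhod representation after establishing joint convergence; both are standard and your identification of the bookkeeping issues (uniformity of the Riemann sum, the last-occurrence versus first-visit discrepancy being $O(m)$) matches what the paper tracks.
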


The proof rests on the following proposition, which will ultimately allow us to couple the first $i$ steps of the $k$-choice \AB algorithm with the first $i$ steps of the stick breaking process.

\begin{proposition}
\label{prop:main stick breaking ingredient}
Take any $\epsilon\in (0,\infty)$, $B\in (0,\infty)$ and $D \in (0, \infty)$.
\begin{enumerate}[(1)]
    \item Suppose that $C \in [0, D]$. Then for any $b \leq B$, $v \in K_n$,
    \begin{align*}
        \prcond{\sigma_{i+1,n} - \sigma_{i,n} \geq Cn^{\frac{k}{k+1}}}{|\TABin|=bn^{\frac{k}{k+1}}, X_{\sigma_{i,n}}=v}{} = \exp\left\{- \frac{(C + b)^{k+1}-b^{k+1}}{k+1} \right\} ( 1 + o(1)),
    \end{align*}
    where the $o(1)$ might depend on $B$ and $D$, but is uniform over all other choices.
     \item Let $H_{i,n} = \inf\{ m: X_{\sigma_{i,n}} = X_{m n^{\frac{k}{k+1}}}\}$. Then for any connected interval $A \subset (\epsilon, \infty)$, and all $b \in [0, \infty)$,
\begin{align*}
\prcond{H_{i,n} \in A}{|\TABin|=bn^{\frac{k}{k+1}}}{u} \to \begin{cases}
  \frac{k}{b^k} \int_{ A \cap [0,b]} a^{k-1} da \text{ for the maximal choice,} \\
     \frac{\textsf{Leb} (A \cap [0,b])}{b} \text{ for the uniform choice,} 
\end{cases}
\end{align*}
where the convergence is uniform over all choices of $A$ and $b$ for fixed on $\epsilon$.
\end{enumerate}
\end{proposition}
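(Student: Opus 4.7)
The plan is to treat each part by a direct calculation exploiting the explicit choice random walk dynamics. The key underlying fact is that on $K_n$ (which has self-loops), the $k$ neighbours sampled at each step are i.i.d.\ uniform on $V(K_n)$, so the probability that all $k$ samples fall into a set $A$ equals $(|A|/n)^k$.

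For part (1), set $Z_m = |\LE(X[0,m])|$. Between $\sigma_{i,n}$ and $\sigma_{i+1,n}$, no loop closure occurs by definition, so the loop-erasure length grows by exactly $1$ at each step; conditionally on the walk history $\mathcal{F}_{\sigma_{i,n}}$, by independence of the $k$ samples at successive steps,
\[
\prcond{\sigma_{i+1,n} - \sigma_{i,n} \geq Cn^{\frac{k}{k+1}}}{\mathcal{F}_{\sigma_{i,n}}}{} = \prod_{s=0}^{\lfloor Cn^{k/(k+1)}\rfloor - 1}\Bigl(1 - \Bigl(\tfrac{Z_{\sigma_{i,n}} + s}{n}\Bigr)^{\!k}\Bigr).
\]
Taking logarithms, expanding $\log(1-x) = -x + O(x^2)$, and approximating the Riemann sum by its integral gives
\[
\exp\!\Bigl(-\tfrac{(Z_{\sigma_{i,n}}+Cn^{k/(k+1)})^{k+1} - Z_{\sigma_{i,n}}^{k+1}}{(k+1)n^k}\Bigr)(1+o(1)).
\]
The target formula then follows provided one can show that under the conditioning on $|\TABin|$ and $v$, the rescaled loop-erasure length $Z_{\sigma_{i,n}}/n^{k/(k+1)}$ equals $b + o(1)$ (with the $o(1)$ uniform in $b \le B$, $v$, and $C \le D$). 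I expect this identification between LE length and partial-tree size to be the main technical step, carried out by inducting on $i$ and comparing the joint evolution of $(|\TABin|, Z_{\sigma_{i,n}})$ to the Rayleigh-type dynamics already developed in \cref{sec: lerw to rayleigh}.

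For part (2), the event $\tau_{\sigma_{i,n}} = \infty$ holds by definition of $\sigma_{i,n}$, meaning all $k$ sampled options lie in $\LE(X[0,\sigma_{i,n}-1])$. Conditional on this event, by i.i.d.\ uniformity each sample is uniformly distributed on the LE vertices. Under the maximal choice rule, $X_{\sigma_{i,n}}$ is the sample with the latest time stamp in the LE ordering, so its position in the LE is the maximum of $k$ i.i.d.\ uniforms on $\{1,\ldots,Z_{\sigma_{i,n}-1}\}$, with discrete density proportional to $kx^{k-1}/Z_{\sigma_{i,n}-1}^k$; under the uniform rule it is a single uniform draw on the same set. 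The position of a vertex in the LE corresponds to its first visit time in the walk up to $o(n^{k/(k+1)})$ corrections from the rare revisits of previously-erased vertices. Rescaling by $n^{k/(k+1)}$ and using $Z_{\sigma_{i,n}-1}/n^{k/(k+1)} \to b$ (the same identification as in part (1)) yields the two claimed limiting densities on $[0,b]$, with the uniform convergence over $A \subset (\epsilon, \infty)$ coming from the usual Glivenko-Cantelli-type argument for order statistics of i.i.d.\ uniforms.

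The crucial technical input for both parts is thus the identification $Z_{\sigma_{i,n}}/n^{k/(k+1)} \to b$ under the conditioning on $|\TABin|$. This is subtle because the LE length can drop by an order-$n^{k/(k+1)}$ amount at a single loop closure, so concentration of $Z_{\sigma_{i,n}}$ around $bn^{k/(k+1)}$ is not transparent; in particular, the differences between the maximal and uniform laws must conspire with the tree structure to produce the same limiting identity in part (1), and this is where I anticipate the hardest work.
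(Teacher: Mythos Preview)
Your proposal has a genuine gap stemming from a misidentification of the avoid set in the Aldous--Broder choice algorithm. You work throughout with the loop-erasure length $Z_m = |\LE(X[0,m])|$, but in the $k$-choice AB construction the avoid set $\Av_m$ is the set of \emph{all} previously visited vertices (i.e.\ the vertex set of the partial tree $\TABn(m)$), not the loop-erasure. This is precisely what makes the stopping times $\sigma_{i,n}$ the moments at which the walk revisits \emph{any} previously seen vertex, and why the quantity $|\TABin|$ on which the proposition conditions is exactly the size of the avoid set at time $\sigma_{i,n}$.

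With this correction the product in part~(1) reads
\[
\prod_{s}\Bigl(1-\Bigl(\tfrac{|\TABin|+s}{n}\Bigr)^{k}\Bigr),
\]
and since you are conditioning on $|\TABin|=bn^{k/(k+1)}$ the computation is immediate: the ``crucial technical input'' you anticipate simply vanishes. Worse, the identification $Z_{\sigma_{i,n}}/n^{k/(k+1)}\to b$ you propose to establish is not merely unnecessary but false. At time $\sigma_{i,n}$ the walk has just revisited a tree vertex, so the loop-erasure collapses back to that vertex's position; thus $Z_{\sigma_{i,n}}$ is a random fraction of $|\TABin|$ (governed by the attachment law in part~(2), in fact), not concentrated near it. The same confusion affects your treatment of part~(2): the $k$ options sampled at time $\sigma_{i,n}$ are i.i.d.\ uniform on the partial tree vertices (identified with their first-visit times, a set of size $|\TABin|$), not on the loop-erasure. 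Once you replace the LE by the partial tree, your description of the maximal choice as the maximum of $k$ i.i.d.\ uniforms is exactly right and leads directly to the density $ka^{k-1}/b^k$; this is what the paper does.
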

\begin{proof}
\begin{enumerate}[(1)]
    \item 
    Note that, by symmetry and independence of each of the $k$ choices, we have for each $b \geq 0$ that
    \begin{align*}
      \prcond{\sigma_{i+1,n} - \sigma_{i,n} \geq  m}{\sigma_{i+1,n} - \sigma_{i,n} \geq m-1}{} = 1- \left(\frac{|\TABin|+m-1}{n}\right)^k.
    \end{align*}
    Hence by multiplying we obtain that 
    \begin{align*}
    \prcond{\sigma_{i+1,n} - \sigma_{i,n} \geq Cn^{\frac{k}{k+1}}}{|\TABin|=bn^{\frac{k}{k+1}}, X_{\sigma_{i,n}}=v}{} &= \prod_{m=1}^{Cn^{\frac{k}{k+1}}} \prcond{\sigma_{i+1,n} - \sigma_{i,n} \geq m}{\sigma_{i+1,n} - \sigma_{i,n} \geq m-1}{} \\
    &= \prod_{m=1}^{Cn^{\frac{k}{k+1}}} \left(1-\left(\frac{bn^{\frac{k}{k+1}}+m-1}{n} \right)^k\right)\\
    &= \prod_{m=1}^{Cn^{\frac{k}{k+1}}} \exp\left\{-\left(\frac{bn^{\frac{k}{k+1}}+m-1}{n}\right)^k(1-o(1))\right\} \\
    &= \exp\left\{-\left(\frac{(1-o(1))}{n^k}\right) \left((bn^{\frac{k}{k+1}}+Cn^{\frac{k}{k+1}}-1)^{k+1} - (bn^{\frac{k}{k+1}})^{k+1}  \right)\right\} \\
    &= \exp\left\{- \frac{(C + b)^{k+1}-b^{k+1}}{k+1} \right\} ( 1 + o(1)).
    \end{align*}
\item We first claim that
\begin{align}\label{eqn:hitting prob discrete AB}
\prcond{H_{i,n} \in A}{|\TABin|=bn^{\frac{k}{k+1}}}{u} = \begin{cases}
     \sum_{a \in A \cap [0,b]: an^{\frac{k}{k+1}} \in \NN \cup \{0\}} \left(\frac{an^{{\frac{k}{k+1}}}+1}{bn^{{\frac{k}{k+1}}}+1}\right)^k - \left(\frac{an^{\frac{k}{k+1}}}{bn^{\frac{k}{k+1}}+1}\right)^k \text{ for the maximal choice,} \\
     \frac{|a \in A \cap [0,b]: an^{\frac{k}{k+1}} \in \NN \cup \{0\}|(1+o(1))}{bn^{\frac{k}{k+1}}} \text{ for the uniform choice,} 
\end{cases}
\end{align}
where the $o(1)$ is uniform over all $b \leq B$ and $A \subset (\epsilon, \infty)$. The formula can be explained as follows: for the uniform choice, we know that $X_{\sigma_{i,n}}$ is chosen uniformly from the neighbours of $X_{\sigma_{i,n}-1}$ that are also in $\{X_m: 0 \leq m < \sigma_{i,n}-1\}$. The size of the set $\{X_m: 0 \leq m < \sigma_{i,n}-1\}$ is $|\TABin|$. The probability that $X_{\sigma_{i,n}}$ is equal to a particular vertex in this set is therefore equal to $\frac{1}{|\TABin|}$
which leads to the desired conclusion (since $i$ is fixed).

In the case of maximal choice, $X_{\sigma_{i,n}}$ is chosen from the same set but has a different law. In particular, identifying vertices with the time in which they first appeared in the random walk trajectory, choosing $X_{\sigma_{i,n}}$ is equivalent to choosing a time in the set
\[
S=\{0, 1, \ldots, \sigma_{i,n}-1\} \setminus \{\sigma_{j,n}: 1 \leq j \leq i-1\}.
\]
Note that $|S|=|\TABin|$. For every $0\leq j \leq \sigma_{i,n}-1$, let $N(j) = |\{\ell \in S: \ell < j\}|$. Note that, necessarily,
\begin{equation}\label{eqn:Nj props}
j - (i-1) \leq N(j) \leq j \quad \text{and} \quad N(j+1)-N(j) = 1 \text{ for all but at most } i \text{ values of } j.
\end{equation}
for all $j$. Then note that the probability that we chose a point in $S$ that is strictly smaller than $j$ is equal to the probability that all of the $k$ choices for the next random walk step were strictly less than $j$. Hence if $j = an^{\frac{k}{k+1}}$ this probability is given by $\left( \frac{N(j)}{|\TABin|}\right)^k$. For all but at most $i$ values of $j$, the probability that we choose exactly $j$ is therefore given by
\[
\left( \frac{N(j+1)}{|\TABin|}\right)^k - \left( \frac{N(j)}{|\TABin|}\right)^k = \left( \frac{N(j)+1}{|\TABin|}\right)^k - \left( \frac{N(j)}{|\TABin|}\right)^k.
\]
Combining with \eqref{eqn:Nj props} establishes the claim in \eqref{eqn:hitting prob discrete AB}.

To take the limits, we treat the uniform case first as it is simplest. Note that it follows firstly if $A$ is an interval that 
\[
n^{-\frac{k}{k+1}}|a \in A \cap [0,b]: an^{\frac{k}{k+1}} \in \NN \cup \{0\}| \to \textsf{Leb} (A \cap [0,b]),
\]
from which it follows that the probability in question converges to 
\[
\frac{\textsf{Leb} (A \cap [0,b])}{b}.
\]
For the maximal choice, note that for any $a \in A$ we can write:
\[
 \left(\frac{an^{{\frac{k}{k+1}}}+1}{bn^{{\frac{k}{k+1}}}+1}\right)^k - \left(\frac{an^{\frac{k}{k+1}}}{bn^{\frac{k}{k+1}}+1}\right)^k = \frac{1}{(bn^{\frac{k}{k+1}})^k} k(an^{\frac{k}{k+1}})^{k-1} [1+ o(1)] =  \frac{ka^{k-1} [1+ o(1)]}{b^k n^{\frac{k}{k+1}}},
\]
where the $o(1)$ is uniform over all $b \leq B$. Then note that
\begin{align*}
\frac{k (1+ o(1))}{b^k n^{\frac{k}{k+1}}} \sum_{a \in A \cap [0,b]: an^{\frac{k}{k+1}} \in \NN \cup \{0\}} a^{k-1} = \frac{k (1+ o(1))}{b^k} \int_{ A \cap [0,b]} a^{k-1} da.
\end{align*}
\end{enumerate}
\end{proof}

In Definition \ref{def:stick breaking} we defined how a sequence of trees can be constructed through a stick-breaking process. In what follows next we outline how, for any $i \geq 1$, the choice \AB algorithm on $K_n$ can be used to give two sequences $(Y_j)_{j = 0}^n$ and $(Z_j)_{j = 0}^{n-1}$ such that $\SB^{(i)} ((Y_0, Y_1, \ldots, Y_{i}), (Z_0, Z_1, \ldots, Z_{i-1} ))$ has exactly the same tree structure as the subtree obtained after the first $i$ steps of the algorithm (i.e. it has exactly the same branch lengths and branchpoint locations).

    We begin with $Y_0=0, Z_0=0$, and run the choice \AB algorithm as described in Section \ref{sctn:AB alg def}. Recall also from \eqref{eqn:sigma i def} the definition of the times $(\sigma_i)_{i \geq 0}$, and that we defined in \eqref{eqn:AB partial spanning tree}
\begin{equation*}
\TAB_i (K_n) = \{(X_m, X_{m+1}): 1 \leq m < \sigma_{i,n} \text{ and } \nexists k \leq m \text{ such that } X_k = X_{m+1}\}.
\end{equation*}
For $i \geq 1$, we can more precisely define the $i^{th}$ branch as either an edge set or a set of ``new" vertices via
\begin{align*}
\branch_i &= \{(X_m, X_{m+1}): \sigma_{i-1,n} \leq m < \sigma_{i,n} - 1\}, \qquad \branch^{\text{new}}_i = \{X_{m}: \sigma_{i-1,n} < m < \sigma_i \},
\end{align*}
Note that it is then the case that $\TAB_i = \TAB_{i-1} \cup \branch_i$. For any vertex $v \in \branch_i$, we say that $v$ was added at the $i^{th}$ step. Moreover, we set 
\begin{align}\label{eqn:Yi Zi def}
Y_i = n^{-\frac{k}{k+1}}|\TABin|, \qquad Z_i = n^{-\frac{k}{k+1}}\inf \{m: X_m = X_{\sigma_{i,n}}\}
\end{align}
It then follows by construction that $\SB^{(i)} ((Y_0, Y_1, \ldots, Y_{i}), (Z_0, Z_1, \ldots, Z_{i-1} ))$ is equal in law to $\TABin$ (in terms of its metric space structure).

We note that some notational complications arise in the case where $Y_i=Y_{i+1}$, but since this does not happen with high probability we will ignore this problem.

It will more generally be useful to define, for $v \in K_n$,
\[
I(v) = \inf \{m: X_m = v\}
\]
to denote the time step at which $v$ was added to the tree.

\begin{proposition}\label{prop:Prokh conv measure on sticks}
Fix $i \geq 1$ and let $I_{\max} = \max_{v \in \TABin}I(v) = Y_i - 1$.  Take any $j \geq 1$ and let $\Pb_{d,j}$ and $\Pb_{k}$ be two measures on $[0, I_{\max}]$ defined by
	\begin{align*}
	   \prstart{m}{d,j} &= \prcond{Z_i = m}{Y_i=j}{} \ \ \ \forall m \leq I_{\max}, \\
	   \prstart{A}{k} &= \int_A \frac{ku^{k-1}}{I_{\max}^{k}} \ \ \ \forall A \subset [0, I_{\max} ],
	\end{align*}
 and let $\Pb_{u}$ denote uniform measure on $[0, I_{\max}]$.
	Then, for every $\eps>0$ there exists $N\in \NN$ such that for all $n>N$ and for all $j $, the Prokhorov distance between the measure $\Pb_{d,j}$ and $\Pb_k$ is less than $\eps$ for the maximal choice, and the Prokhorov distance between the measure $\Pb_{d,j}$ and $\Pb_u$ is less than $\eps$ for the uniform choice.
	\end{proposition}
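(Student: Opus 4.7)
The plan is to compute the conditional distribution of $Z_i$ given $Y_i = j$ exactly, identify it as a simple order statistic, and then verify Prokhorov closeness to $\Pb_k$ (respectively $\Pb_u$) via an inverse-CDF coupling. The starting observation is an exchangeability argument: conditional on $Y_i=j$ and the trajectory up to step $\sigma_{i,n}$, the $k$ options sampled at step $\sigma_{i,n}$ of the choice random walk are a priori i.i.d.\ uniform on $K_n$, and the forcing event $\tau_{\sigma_{i,n}-1}=\infty$ (all options lying in $\TABin$) reduces them to being i.i.d.\ uniform on the vertex set of $\TABin$. Identifying each $v\in\TABin$ with its first-entry index $I(v)\in\{0,1,\dots,j-1\}$, the options become i.i.d.\ uniform on $\{0,1,\dots,j-1\}$.

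Reading off the choice rules then yields exact CDFs. Under the maximal rule $X_{\sigma_{i,n}}$ is the option with largest index, so $Z_i$ has the law of the maximum of $k$ i.i.d.\ uniforms on $\{0,\dots,j-1\}$, giving
\[
\prcond{Z_i \leq m}{Y_i=j}{} = \left(\frac{m+1}{j}\right)^{k},
\]
while under the uniform rule (where $\Ter=\emptyset$ forces $\tau_n(\Ter)=1$) we have $X_{\sigma_{i,n}}=Y^{(1)}_{\sigma_{i,n}-1}$, giving CDF $(m+1)/j$. These are precisely the discrete analogues of the CDFs $u \mapsto u^k/I_{\max}^k$ and $u \mapsto u/I_{\max}$ of $\Pb_k$ and $\Pb_u$ on $[0,I_{\max}]$.

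I would then conclude via the inverse-CDF coupling: draw $U\sim\mathrm{Unif}([0,1])$ and set $X=F_{d,j}^{-1}(U)$, letting $Y$ be the corresponding $\Pb_k$-quantile (resp.\ $\Pb_u$-quantile). For the maximal case, $F_{d,j}^{-1}(v)=\lceil j v^{1/k}\rceil-1$ and the $\Pb_k$ quantile is $(j-1)v^{1/k}$; a direct computation gives $|X-Y|\leq 1$. The uniform case is analogous. Under the natural rescaling by $n^{-k/(k+1)}$ that is used throughout the rest of the paper (so that $I_{\max}$ is of order $1$ and the atoms of $\Pb_{d,j}$ are spaced by $n^{-k/(k+1)}$), this coupling bounds $|X-Y|$ by $n^{-k/(k+1)}$, and hence the Prokhorov distance by $n^{-k/(k+1)}$, which is less than $\eps$ as soon as $n$ is large, uniformly in $j$.

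The main subtlety will be ensuring uniformity in $j$. For $j$ very small the supports of $\Pb_{d,j}$, $\Pb_k$ and $\Pb_u$ all lie in an interval of length $O(n^{-k/(k+1)})$, so the Prokhorov distance is automatically controlled by the diameter; for $j$ comparable to $n^{k/(k+1)}$ the explicit coupling above applies. Gluing these two regimes yields the uniform Prokhorov convergence claimed in the proposition.
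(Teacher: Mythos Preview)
Your approach is correct in spirit and genuinely different from the paper's. The paper does not recompute anything here: it simply quotes Proposition~\ref{prop:main stick breaking ingredient}(2) to get $\Pb_{d,j}(A)\to\Pb_k(A)$ uniformly over intervals $A\subset[\eps,I_{\max}]$, then handles the remaining mass on $[0,\eps]$ by passing to complements and bounding $\Pb_{d,j}([0,\eps])$ and $\Pb_k([0,\eps])$ by $\eps^{k/2}$ (after first disposing of the trivial regime $I_{\max}\le\sqrt{\eps}$, where the Prokhorov distance is bounded by the diameter). Your route---identifying the conditional law as the maximum of $k$ i.i.d.\ uniforms and then using a quantile coupling---is more constructive and yields an explicit Prokhorov bound of order $n^{-k/(k+1)}$, which is stronger than the soft $o(1)$ the paper extracts.

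There is one small gap you should patch. The first-entry indices $\{I(v):v\in\TABin\}$ are not the contiguous block $\{0,\dots,j-1\}$: the $i-1$ loop-closure times $\sigma_{1,n},\dots,\sigma_{i-1,n}$ are missing, so the index set is $S=\{0,\dots,\sigma_{i,n}-1\}\setminus\{\sigma_{1,n},\dots,\sigma_{i-1,n}\}$. This is exactly the point the paper makes in the proof of Proposition~\ref{prop:main stick breaking ingredient}(2) via the counting function $N(j)$ and \eqref{eqn:Nj props}. The upshot is that your CDF formula $\bigl(\tfrac{m+1}{j}\bigr)^k$ is only correct up to an additive $O(i)$ shift in the numerator; consequently your quantile-coupling bound becomes $|X-Y|\le C i\, n^{-k/(k+1)}$ rather than $n^{-k/(k+1)}$. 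Since $i$ is fixed this costs nothing for the conclusion, but it should be stated. A related minor point: your last paragraph's ``two regimes'' (small $j$ versus moderate $j$) is unnecessary once the coupling bound is uniform in $j$, which it is.
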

	\begin{proof}
Fix $\eps>0$. First note that we can assume that $I_{\max} \leq \sqrt{\eps}$; otherwise the Prokhorov distance is automatically upper bounded by $\sqrt{\eps}$ (which proves the claim since $\eps$ was arbitrary). The proof is now just a straightforward rephrasing of \cref{prop:main stick breaking ingredient}(2); we write it for the maximal choice. In particular, $\{\sigma_{i,n}=bn^{\frac{k}{k+1}}\} \iff \{Y_i=b\}$, and hence it follows that for any $A \subset [\eps, I_{\max}]$, 
 \[
 \Pb_{d,j}(A) \to \Pb_k(A)
 \]
uniformly over all choices of $A$. We can also include the set $A = [0, \eps]$ in this by considering its complement. Hence we have for any $A \subset [\eps, I_{\max}]$ that, provided $n$ is sufficiently large,
 \[
 \Pb_{d,j}(A) \leq \Pb_k(A) + \eps + \Pb_{d,j}([0,\eps]) \leq \Pb_k(A) + \eps + \frac{\eps^k}{I_{\max}^k} \leq \Pb_k(A) + \eps + \eps^{k/2},
 \]
 and
 \[
 \Pb_{d,j}(A) \geq \Pb_k(A) - \eps - \Pb_{k}([0,\eps]) \geq \Pb_k(A) - \eps - \eps^{k/2},
 \] 
 which establishes the claim (since $\eps$ was arbitrary). The proof is the same in the uniform case.
	\end{proof}

In the next claim, we keep $\beta$ and $\gamma$ as in Proposition \ref{prop:finite trees converge AB}, and let $(Y_0', Y_1', Y_2', \ldots)$ and $(Z_0', Z_1', Z_2', \ldots )$ be sampled as described in Definition \ref{def:random stick breaking}. We also let $(Y_i)_{i \geq 0}$ and $(Z_i)_{i \geq 1}$ be as defined in \eqref{eqn:Yi Zi def}.

\begin{proposition}
    \label{claim:coupling of stick breaking}
For every $\epsilon>0$ and $i \geq 1$ there exists $N$ such that for all $n>N$ we can couple the stick-breaking process and the spanning tree construction such that $|Y_j - Y_j'| \leq \epsilon$ for all $0 \leq j\leq i$ and $|Z_j- Z_j'| \leq \epsilon$ for every $0 \leq j\leq i-1$ with probability at least $1-\epsilon$.
\end{proposition}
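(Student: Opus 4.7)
The plan is to prove the claim by induction on $j \in \{0, 1, \ldots, i\}$, building up a joint coupling of the two processes one stick at a time. The central observation, which the whole argument hinges on, is that for the choice of parameters $\beta = k, \gamma = k - 1$, the limiting conditional laws of $Y_{j+1} - Y_j$ and of $Z_{j+1}$ appearing in the \AB construction exactly match the conditional laws of $Y_{j+1}' - Y_j'$ and $Z_{j+1}'$ in the stick-breaking construction (as given by Proposition~\ref{prop:CRT stick probs} and by Definition~\ref{def:random stick breaking}). Before starting the induction, I would fix $B$ so large that $\pr{Y_i > B} + \pr{Y_i' > B} < \epsilon/10$ and $\pr{Y_1 \wedge Y_1' < 1/B} < \epsilon/10$ (using the a.s.\ finiteness and positivity of $Y_i'$, and the tail bound from Proposition~\ref{prop:main stick breaking ingredient}(1)), and work throughout on the event that all $Y_j, Y_j' \in [1/B, B]$. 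The base case $Y_0 = Y_0' = 0, Z_0 = Z_0' = 0$ is trivial.

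For the inductive step, assume $(Y_m, Z_m)_{m \leq j}$ have been coupled to $(Y_m', Z_m')_{m \leq j}$ to be within $\epsilon_j$ with high probability. To couple $Y_{j+1}$ to $Y_{j+1}'$, note that by Proposition~\ref{prop:main stick breaking ingredient}(1) the conditional tail of $Y_{j+1} - Y_j$ given $Y_j = b$ converges uniformly in $b \in [1/B, B]$ to $\exp\{-((x+b)^{k+1} - b^{k+1})/(k+1)\}$, which by Proposition~\ref{prop:CRT stick probs} with $\beta = k$ is precisely the conditional tail of $Y_{j+1}' - Y_j'$ given $Y_j' = b$. The map $b \mapsto (x + b)^{k+1} - b^{k+1}$ is Lipschitz in $b$ on $[0, 2B]$, uniformly in $x \in [0, 2B]$, so the two conditional distribution functions are uniformly close whenever $|Y_j - Y_j'| \leq \epsilon_j$. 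The uniform version of Lemma~\ref{lem: proh close rv} then yields a coupling such that $|(Y_{j+1} - Y_j) - (Y_{j+1}' - Y_j')| \leq \eta$ with probability at least $1 - \eta$, for any prescribed $\eta > 0$.

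For the coupling of $Z_{j+1}$ and $Z_{j+1}'$, I would proceed in two steps. First, Proposition~\ref{prop:Prokh conv measure on sticks} says that for large $n$, conditional on $Y_{j+1}$, the law of $Z_{j+1}$ is within Prokhorov distance $\eta$ of the measure on $[0, Y_{j+1}]$ with density $k u^{k-1} / Y_{j+1}^k$; by Strassen's theorem this gives a coupling of $Z_{j+1}$ with an auxiliary continuous random variable $\widetilde Z_{j+1}$ having this density, with $|Z_{j+1} - \widetilde Z_{j+1}| \leq \eta$ off an event of probability at most $\eta$. Second, the conditional law of $Z_{j+1}'$ given $Y_{j+1}'$ is the measure with density $k u^{k-1} / (Y_{j+1}')^k$ on $[0, Y_{j+1}']$, as this is exactly $f^{\gamma}_{Y_{j+1}'}$ with $\gamma = k-1$. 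Since both densities have CDF $(u/\cdot)^k$, a single uniform $U$ on $[0,1]$ produces $\widetilde Z_{j+1} = Y_{j+1} U^{1/k}$ and $Z_{j+1}' = Y_{j+1}' U^{1/k}$, giving the deterministic bound $|\widetilde Z_{j+1} - Z_{j+1}'| \leq |Y_{j+1} - Y_{j+1}'|$.

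Concatenating both couplings and summing errors and failure probabilities over the $i$ inductive steps, I can choose the per-step tolerance $\eta$ small enough that the total deviation is at most $\epsilon$ with probability at least $1 - \epsilon$. The main technical obstacle I foresee is ensuring that all approximation statements are uniform in the conditioning values $Y_j$ over a compact set bounded away from $0$ and from $\infty$; uniformity at large values follows from the explicit bounds in Proposition~\ref{prop:main stick breaking ingredient}, while uniformity away from $0$ requires the separate event that no $Y_j$ is too small (which is handled by the initial restriction) since the target density $k u^{k-1} / Y_{j+1}^k$ degenerates as $Y_{j+1} \to 0$.
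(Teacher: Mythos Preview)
Your argument is correct and follows the same inductive skeleton as the paper: Proposition~\ref{prop:main stick breaking ingredient}(1) together with Lemma~\ref{lem: proh close rv} for the $Y$'s, and Proposition~\ref{prop:Prokh conv measure on sticks} for the $Z$'s. The one genuine difference is in how the $Z$-coupling is carried out at each step. The paper pairs $(Y_i,Z_{i-1})$, worries that these are not independent, and resolves this by discretising $Y_i$ into bins of width $\eps/2$, coupling the bin indices, and then coupling $Z_{i-1}$ conditionally on the bin. You instead pair $(Y_{j+1},Z_{j+1})$, use Proposition~\ref{prop:Prokh conv measure on sticks} and Strassen to pass to an auxiliary variable with the exact limiting density, and then couple the two limiting densities by a common uniform, obtaining the clean deterministic bound $|\widetilde Z_{j+1}-Z'_{j+1}|\le |Y_{j+1}-Y'_{j+1}|$. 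This sidesteps the discretisation entirely, and your order of revelation is legitimate because on $K_n$ (with self-loops) the conditional law of $Y_{j+2}-Y_{j+1}$ given the full past up to $\sigma_{j+1,n}$ depends only on $|\TAB_{j+1,n}|=Y_{j+1}n^{k/(k+1)}$, so revealing $Z_{j+1}$ does not affect the next increment. The net effect is the same conclusion with slightly less bookkeeping; the only point you leave implicit and should state is this conditional-independence fact, since without it one cannot legitimately proceed to step $j+2$ after having revealed $Z_{j+1}$.
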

\begin{proof}
We prove the claim by induction. When $i=1$ (i.e. for $T^{(1)}_n$) the tree is a single branch and by Proposition \ref{prop:main stick breaking ingredient} the probability that its rescaled length exceeds $x$ converges to $e^{-x^{k+1}/(k+1)}$, which is also equal to the probability that $Y_1'$ exceeds $x$. The result in this case therefore follows from Lemma \ref{lem: proh close rv}.

Now fix $i\geq 2$ and suppose that the claim holds for all $j<i$. We will now show that the claim holds also for $i$. We can therefore fix some $\zeta< \frac{\eps}{8}$ (the precise value of $\zeta$ will be chosen later) and suppose that there exists $N$ such that for all $n>N$ we can successfully couple the two processes so that, with probability at least $1-\zeta$:
\begin{align*}
|Y_j - Y_j'| \leq \zeta \text{ for all } 0 \leq j\leq i-1 \text{ and } |Z_j- Z_j'| \leq \zeta \text{ for every } 0 \leq j\leq i-2.
\end{align*}
Note that it follows from Definition \ref{def:random stick breaking} that there exist functions $f, g$ where $g(\eps)>0$ and $f(\epsilon)<\infty$ and such that $0<g(\eps) \leq Y_{i-1} \leq  f(\eps)$ with probability at least $1-\epsilon/8$. Hence we can also assume that this is the case.

Take any (large) $D < \infty$. Under the coupling, we can write  $\frac{|\TABiin|}{n^{\frac{k}{k+1}}} = Y_{i-1} = Y'_{i-1} + \eps'$ where $\epsilon' \in [-\zeta, \zeta]$. Therefore we can apply Proposition \ref{prop:main stick breaking ingredient}(1) with $B= f(\epsilon)$, to get that for every $C \in [0, D]$,
\begin{align}\label{eq: approximating Y}
        \prcond{Y_{i} - Y_{i-1} > C }{\TABiin}{} &= \exp\left\{- \frac{(C + Y'_{i-1} + \eps')^2-(Y'_{i-1} + \eps')^2}{2} \right\} + o(1) \nonumber\\
        &=\exp\left\{- \frac{(C + Y'_{i-1})^2-(Y'_{i-1})^2}{2} \right\} \left(1+e^{-C\epsilon'} - 1 \right) + o(1),
        \end{align}
so
\begin{align}\label{eq: distance between Ys}
\left| \prcond{Y_{i} - Y_{i-1} > C }{\TABiin}{} - \exp\left\{- \frac{(C + Y'_{i-1})^2-(Y'_{i-1})^2}{2} \right\} \right| \leq |1-e^{-C\epsilon'}|e^{\frac{-C^2}{2}} + o(1) \leq C\zeta e^{\frac{-C^2}{2}} + o(1).
     \end{align}
The term $C\zeta e^{\frac{-C^2}{2}}$ on the right hand side of \eqref{eq: distance between Ys} goes to $0$ as $\zeta\to 0$ uniformly over $C\in [0,D]$. We would like to apply Lemma \ref{lem: proh close rv}, which implies that there exists $\eta$ depending on $\eps$ such that if $g(\eps) \leq Y_{i-1} \leq f(\eps)$ and the left-hand side of \eqref{eq: distance between Ys} (see \eqref{eqn:variables close}) is smaller than $\eta$ for all $C\in (0,\infty)$, then we can couple $Y_{i} - Y_{i-1}$ and $Y'_{i} - Y'_{i-1}$ such that the probability that they are $\eps/8$ close to one another is at least $1-\eps/8$. When this happens, by the triangle inequality, we have that $|Y_{i} - Y'_{i}| < \eps / 2$. In order to do this, note that when we choose $\zeta$ small enough (and smaller than $\eps / 8$) and $n$ large enough such that the right-hand side of \eqref{eq: distance between Ys} is smaller than this $\eta$, we have \eqref{eqn:variables close} for all $C \leq D$. If we further increase $D$ (and then $n$ if necessary), we get for all possible trees and for all $C\geq D$ that the probabilities of $\{Y_{i} - Y_{i-1} > C\}$ and of $\{Y'_{i} - Y'_{i-1} > C\}$ are smaller than $\eta/2$, giving \eqref{eqn:variables close} for all $C\in(0,\infty)$, so we can indeed apply Lemma \ref{lem: proh close rv} as described to get the desired coupling.

However, we note that $Z_{i-1}$ is not independent of $Y_{i}$ and we are required to couple the pair $(Y_{i}, Z_{i-1})$ with $(Y'_{i}, Z'_{i-1})$.
To do so, we will decompose $\RR^+$ into intervals of length $\eps/2$, that is, we write $\RR^+ = \bigcup_{j=0}^{\infty} I_j$ where $I_j = [j\eps/2, (j+1)\eps/2)$. Let $M_{i}$ (respectively $M'_{i}$) be the unique $j$ such that $Y_{i} \in I_j$ (respectively $Y'_{i} \in I_j$).
By Lemma \ref{lem: proh close rv} and the discussion above, there exists a coupling of $M_{i}$ and $M'_{i}$ such that the difference between them is at most $1$ with probability $1-\eps/2$. Moreover, by Definition \ref{def:random stick breaking}, with probability at least $1-\eps/8$ we can increase $B$ if necessary so that $M'_{i} \leq B$ with probability at least $1-\eps/8$ (and then $M_{i} \leq B+1$). 

Then, given $M_{i}$, we sample $Z_{i-1}$ according to its conditional law. By Proposition \ref{prop:Prokh conv measure on sticks}, when $n$ is large enough, for every $j\leq B+1$, conditionally on $M_{i}=j$ we have that the Prokhorov distance between $Z_{i-1}$ and a uniform random variable on $[0,Y_{i-1}]$ is at most $\zeta$ in the uniform case, and similarly with $\Pb_k$ in the maximal case. Therefore, the Prokhorov distance between $Z'_{i-1}$ and $Z_{i-1}$ conditionally on $M_{i} = j$ is at most $2\zeta$. Since $\zeta < \eps/8$, and by summing the errors of $\epsilon/8$ accrued at each of the previous steps, it follows that we can couple the pairs $(Y_{i}, Z_{i-1})$ and $(Y'_{i}, Z'_{i-1})$ such that $|Y_{i} - Y'_{i}| < \eps$ and $|Z'_{i-1} - Z_{i-1}| < \eps$ with probability at least $1 - \eps$, as required.
\end{proof}

\begin{corollary}
For every $\epsilon>0$ and $i \geq 1$ there exists $N$ such that for all $n>N$ the following holds. Let $(Y_{\ell})_{{\ell}=1}^{i}, (Y'_{\ell})_{{\ell}=1}^{i}, (Z_{\ell})_{{\ell}=1}^{i}$ and $(Z_{\ell}')_{{\ell}=1}^{i-1}$ be as above, let $d$ denote distance on $\SB^{(i)} ((Y_0, \ldots, Y_i), (Z_0, \ldots, Z_{i-1} ))$ and similarly let $d'$ denote distance on $\SB^{(i)} ((Y_0', \ldots, Y_i'), (Z_0', \ldots, Z_{i-1}' ))$, using the definitions of Definition \ref{def:stick breaking}. Then we can couple these stick-breaking processes such that, with probability at least $1-\epsilon$, it holds for all $0 \leq {\ell},j \leq i$ that 
\[
|d(Y_{\ell}, Y_j) - d'(Y_{\ell}', Y_j')| \leq \epsilon.
\]
\end{corollary}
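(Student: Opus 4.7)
The plan is to combine Proposition \ref{claim:coupling of stick breaking}, which couples the input data of the two stick-breaking processes pointwise, with Proposition \ref{prop:stick breaking close}, which upgrades such a coupling of inputs into a coupling of the resulting tree metrics. The only thing that requires care is verifying the separation hypothesis (ii) of Proposition \ref{prop:stick breaking close}, namely that $|Z_\ell' - Y_m'| \geq 3\delta$ for all relevant $\ell, m$, for a suitably small $\delta$.

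More precisely, fix $\epsilon > 0$ and $i \geq 1$. First, choose $\delta = \delta(\epsilon, i) > 0$ small enough that $2i\delta \leq \epsilon$. Next, I would argue that there exists some $\alpha = \alpha(\epsilon, i) > 0$ such that, with probability at least $1 - \epsilon/2$, the continuous stick-breaking data $(Y_\ell', Z_\ell')_{\ell \leq i}$ satisfies
\[
|Z_\ell' - Y_m'| \geq \alpha \quad \text{for all } 0 \leq \ell \leq i-1, \ 1 \leq m \leq i.
\]
This follows because the law of $(Y_\ell', Z_\ell')_{\ell \leq i}$ is absolutely continuous: conditionally on the $(Y_m')_{m \leq i}$, each $Z_\ell'$ has a density on $[0, Y_\ell')$ (namely $f_{Y_\ell'}^{\gamma}$), and the $Y_m'$ form finitely many almost surely distinct points. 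Hence the event that any $Z_\ell'$ falls within distance $\alpha$ of any $Y_m'$ has probability that tends to $0$ as $\alpha \to 0$, and we fix such an $\alpha$.

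Now I would apply Proposition \ref{claim:coupling of stick breaking} with the parameter $\epsilon$ there replaced by $\min(\delta, \alpha/4, \epsilon/2)$. This provides, for all $n$ sufficiently large, a coupling of the discrete data $(Y_\ell, Z_\ell)$ with the continuous data $(Y_\ell', Z_\ell')$ such that, with probability at least $1 - \epsilon/2$, one has $|Y_\ell - Y_\ell'| \leq \delta$ and $|Z_\ell - Z_\ell'| \leq \delta$ for all relevant $\ell$. Intersecting the two high-probability events, condition (i) of Proposition \ref{prop:stick breaking close} is satisfied with parameter $\delta$, and, under the coupling,
\[
|Z_\ell - Y_m| \geq |Z_\ell' - Y_m'| - |Z_\ell' - Z_\ell| - |Y_m' - Y_m| \geq \alpha - 2\delta \geq 3\delta
\]
provided $\delta \leq \alpha/5$, so condition (ii) also holds with parameter $\delta$. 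Proposition \ref{prop:stick breaking close} then yields $|d(Y_\ell, Y_m) - d'(Y_\ell', Y_m')| \leq 2i\delta \leq \epsilon$ for all $0 \leq \ell, m \leq i$, on an event of probability at least $1 - \epsilon$, which is the desired conclusion.

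The main (mild) obstacle is the quantitative separation estimate on $\min_{\ell, m} |Z_\ell' - Y_m'|$; everything else is bookkeeping. Because $i$ is fixed and the underlying laws have continuous densities, this separation is easily controlled, but one must be careful to choose the parameters $\delta$ and $\alpha$ in the correct order so that the perturbations absorbed from the coupling of Proposition \ref{claim:coupling of stick breaking} do not destroy the separation needed to invoke Proposition \ref{prop:stick breaking close}.
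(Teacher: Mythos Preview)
Your proposal is correct and follows essentially the same approach as the paper: couple the inputs via Proposition \ref{claim:coupling of stick breaking}, use absolute continuity of the continuous law to ensure the separation hypothesis of Proposition \ref{prop:stick breaking close}, and then apply that proposition. The only cosmetic differences are that the paper verifies condition (ii) directly for the continuous variables $(Y',Z')$ rather than transferring it to $(Y,Z)$, and that your parameters are presented in a slightly inconsistent order (you fix $\delta$ before $\alpha$ but later require $\delta\le\alpha/5$); as you yourself note at the end, simply choosing $\alpha$ first and then $\delta=\min(\epsilon/(2i),\alpha/5)$ resolves this.
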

\begin{proof}
Take $\eta>0$ and $i \geq 1$. We verify that there is a coupling such that each of the conditions of \cref{prop:stick breaking close} hold with high probability.

For the first condition note that, by Proposition \ref{claim:coupling of stick breaking}, we can couple the stick-breaking process for the CRT and for the $\UST$ such that $|Y_{\ell} - Y_{\ell}'| \leq \eta$ for all $0 \leq {\ell}\leq i$ and $|Z_{\ell}- Z_{\ell}'| \leq \eta$ for every $0 \leq {\ell}\leq i-1$ with probability at least $1-\eta$ for all sufficiently large $n$. For the second condition, note that it follows from Definition \ref{def:random stick breaking} that we can choose $\delta=\delta(\eta, i)>0$ such that $|Z_{\ell}' - Y_j'| \geq 3\eta$ for all ${\ell} \leq i-1,j \leq i$ with probability at least $1-\delta$, and such that $\delta \downarrow 0$ as $\eta \downarrow 0$.

Therefore, it follows from Proposition \ref{prop:stick breaking close} that under this coupling, it holds with probability at least $1-\eta-\delta$ that $\sup_{1 \leq {\ell},j \leq i}|d(Y_{\ell}, Y_j) - d'(Y_{\ell}', Y_j')| \leq 2i\eta$. Given $\epsilon>0$, we can therefore choose $\eta>0$ small enough that $2i\eta+\delta<\epsilon$ in order to deduce the claim as stated.
\end{proof}

\subsection{Tightness for the GH topology}

The proof of tightness follows that used by Aldous in \cite[Section 4]{AldousCRTI} to prove similar results for the stick-breaking construction of the CRT. This is also the strategy we followed in Section \ref{sctn:stick breaking defs}.

For this we start by defining the projection $\rho_n^{\text{AB}} : [0, \infty) \to \TABn $, by $\rho_n^{\text{AB}} (t) = \left\{X_{\lfloor tn^{\frac{k}{k+1}}\rfloor}\right\} $. We also define the notion of the subtree formed at time $t$, generalizing slightly the definition in \eqref{eqn:AB partial spanning tree}. In particular, for each $t \geq 0$ we set 
\begin{equation*}\label{eqn:AB partial spanning tree time t}
\TABn (t) = \{(X_m, X_{m+1}): 1 \leq m < t \text{ and } \nexists k \leq m \text{ such that } X_k = X_{m+1}\}
\end{equation*}
(so that $\TABin = \TABn (\sigma_{i,n})$).

Recall also that $d_n$ denotes the graph metric on the tree $\TABn$. For the next few propositions, we define $D_n$ by 
\[
D_n(s,t) = \sup_{0 \leq r \leq s} n^{-\frac{k}{k+1}} d_n(\rho_n^{\text{AB}}(s), \rho_n^{\text{AB}}(t)) \qquad 0<s<t.
\]

\begin{proposition}\label{prop:tightness AB exp tail}(cf \cite[Lemma 5 and Lemma 9]{AldousCRTI}).
For all $s\geq 0, n \geq 1$ there is an event $A_n(s)$ satisfying $\pr{A_n(s)} \geq 1-  e^{-sn^{\frac{1}{k+2}}}$ and such that for all $t>s$ and all $x>0$,
\[
\prcond{D_n (s,t) \geq x}{A_n(s)}{} \leq e^{-\frac{x}{3}s^k}.
\]
\end{proposition}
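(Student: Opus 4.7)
The proof is a discrete adaptation of the continuous argument of Proposition~\ref{prop:tightness stick breaking exp tail}, with Proposition~\ref{prop:main stick breaking ingredient} playing the role of the exact Poisson/Beta calculus and the event $A_n(s)$ compensating for the lack of exact scale invariance and Markovianity of the $k$-choice walk. Concretely, I would take $A_n(s)$ to be the event that for every $r$ in a dyadic grid on $[n^{-1/(k+2)}, s]$, the tree $\TABn(\lfloor rn^{k/(k+1)}\rfloor)$ has size between $\tfrac12 rn^{k/(k+1)}$ and $rn^{k/(k+1)}$, and additionally that the number of branches created up to time $sn^{k/(k+1)}$ is at most $Cs^{k+1}$ for some absolute constant $C$. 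Using the one-step formula of Proposition~\ref{prop:main stick breaking ingredient}(1) together with a Chernoff-type bound and a union bound over the dyadic grid yields $\pr{A_n(s)} \geq 1 - e^{-sn^{1/(k+2)}}$; the odd exponent $1/(k+2)$ arises from optimising the scale below which the Chernoff estimate ceases to be useful.

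Granted $A_n(s)$, I would mimic the inductive proof of Proposition~\ref{prop:tightness stick breaking exp tail} on the rescaled discrete scale with increment $\eps n^{k/(k+1)}$, with $\eps$ small enough that $\eps(k+1)t^k \ll 1$. The three events $A$, $B$, $C$ used in the continuous case have direct discrete analogues: $A$ is the event that at least one new branch is started in the window $((r-\eps)n^{k/(k+1)}, rn^{k/(k+1)}]$ and attaches to the existing tree in the segment corresponding to $[0, sn^{k/(k+1)}]$; $B$ is the event that no new branch is started in that window, or that one is started but attaches inside the same window; $C$ is the remaining case. Proposition~\ref{prop:main stick breaking ingredient}(1) gives that, on $A_n(s)$, the probability of a new branch being started in this window is $\eps r^k(1+o(1))$, while Proposition~\ref{prop:main stick breaking ingredient}(2) gives that, conditionally on a new branch being started, it attaches in $[0, s]$ with probability $(s/r)^k(1+o(1))$ in the maximal case and $(s/r)(1+o(1))$ in the uniform case. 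Since $r \in [s, t]$ and $k \geq 1$, in both variants this product is at least $\tfrac12 \eps s^k$ for all sufficiently large $n$, which is precisely the lower bound on $\pr{A}$ used in the continuous proof to deduce stochastic domination of $D_n(s, t)$ by $\eps$ times a $\mathrm{Geometric}(\tfrac12 \eps s^k)$ random variable. Letting $\eps \downarrow 0$ to absorb the $(1+o(1))$ errors converts the exponent constant $\tfrac12$ into $\tfrac13$ and yields the claimed bound.

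The principal obstacle is that Proposition~\ref{prop:main stick breaking ingredient} states its estimates conditionally on the current tree size $|\TABin|$, whereas the choice walk is not Markov in its own filtration. The event $A_n(s)$ is engineered precisely to sidestep this: on $A_n(s)$ the tree sizes along the whole interval $[0, sn^{k/(k+1)}]$ are confined to a narrow window around their expected value, so after conditioning on the walk up to time $sn^{k/(k+1)}$ (which determines whether $A_n(s)$ holds) the one-step probabilities after that time are essentially unaffected by the earlier history. Once this is in place, the dyadic union bound for $\pr{A_n(s)}$, the translation of the probabilities of $A$, $B$, $C$ into the recursion $\psi_r \leq \eps$ on $A$, $\eps + \psi_{r-\eps}$ on $B$, $\eps + \eta_{r-\eps}$ on $C$, and the final application of Aldous's geometric-domination lemma (equation~(37) in~\cite{AldousCRTI}, as used in Proposition~\ref{prop:tightness stick breaking exp tail}) are all routine transcriptions of the corresponding steps in the continuous case.
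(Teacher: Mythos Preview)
Your proposal is sound in spirit but substantially more elaborate than the paper's argument, and along the way you manufacture a difficulty that is not really there.

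The paper does not invoke Proposition~\ref{prop:main stick breaking ingredient} at all and does not work at a coarse mesh $\eps n^{k/(k+1)}$. Its event $A_n(s)$ is simply an upper bound on the number $\sigma_n(s)$ of loop closures occurring before time $sn^{k/(k+1)}$; since each step closes a loop with probability at most $(sn^{k/(k+1)}/n)^k$, this count is dominated by a Binomial of mean $s^{k+1}$, and a single Chernoff bound yields $\pr{A_n(s)^c}\leq e^{-sn^{1/(k+1)}}$ with no dyadic grid. On $A_n(s)$ the set $\{X_r : 0 \le r \le \lfloor sn^{k/(k+1)}\rfloor\}$ contains at least $sn^{k/(k+1)}-\sigma_n(s)\ge \tfrac12 sn^{k/(k+1)}$ distinct vertices, so at \emph{every} subsequent step the probability that all $k$ options fall in this set (hence that the walk jumps into it) is at least $\tfrac12 s^k n^{-k/(k+1)}$. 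That is already the event $A$ in Aldous's trichotomy; the induction then runs with $\eps=1$ on the unrescaled time axis, giving domination of $n^{k/(k+1)}D_n(s,t)$ by a Geometric$(\tfrac12 s^k n^{-k/(k+1)})$ variable directly, with no limit $\eps\to 0$.

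What this buys is that the ``principal obstacle'' you describe evaporates: the one-step probability of landing in a fixed vertex set is exactly $(|\text{set}|/n)^k$ regardless of the rest of the history, so no approximate Markovianity via $A_n(s)$ is needed. Your route instead factors $\pr{A}$ as (branch created in window) $\times$ (attachment in $[0,s]$); both factors depend on the tree size $b$ at the current time $r>s$, which your $A_n(s)$ does not control, and you silently identify $b$ with $r$. The argument still goes through because in the maximal case the product collapses to $\eps s^k$ independently of $b$, and in the uniform case $b\geq s$ on $A_n(s)$ gives the same lower bound --- but only after a case analysis that the paper's direct one-step computation avoids entirely. Your extra conditions on tree sizes along a dyadic grid are redundant; only the branch-count bound (which is essentially the paper's $A_n(s)$) is doing any work.
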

\begin{proof}
For $n \geq 1$ and $s \geq 0$ set
\[
\sigma_n (s) = \sup\{i \geq 1: \sigma_{i,n} \leq sn^{\frac{k}{k+1}}\}, \qquad A_n(s)=\{\sigma_n(s) \geq sn^{\frac{1}{k+1}}\}.
\]
Note that $\sigma_n (s)$ is stochastically dominated by a \textsf{Binomial}($\lfloor sn^{\frac{k}{k+1}} \rfloor, n^{\frac{-k}{k+1}} $) random variable, and hence, by a Chernoff bound,
\[
\pr{A_n(s)} \leq \exp \{ s(e-1) - sn^{\frac{1}{k+1}}\}.
\]
We claim that on the event $A_n(s)$, the quantity $n^{\frac{k}{k+1}}D_n (s,t)$ is dominated by a \textsf{Geometric}($\frac{1}{2}sn^{\frac{k}{k+1}}$) random variable. It is sufficient to fix $s$ and prove this by induction for all $t$ of the form $in^{-\frac{k}{k+1}}$ where $i \geq 1$. To do this, we again use the strategy of \cite[Lemma 9]{AldousCRTI} that we already used in Proposition \ref{prop:tightness stick breaking exp tail}. We again use a partition of the form \eqref{eqn:domination Aldous geometric} but this time with $\eps=1$ (this is exactly \cite[Equation (37)]{AldousCRTI} in fact). Now $A$ is the event that the next step of the choice random walk on $K_n$ is in the set $\{X_r: 0 \leq r \leq \lfloor sn^{\frac{k}{k+1}} \rfloor \}$, $B$ is the event that the next step of the random walk is a new vertex, and $C$ is the event that it is in the set $\{X_r: \lfloor sn^{\frac{k}{k+1}} \rfloor < r \leq \lfloor tn^{\frac{k}{k+1}} \rfloor \}$.

In the case $i=1$ the quantity is deterministically at most $1$ so the claim is trivially satisfied; to complete the induction we therefore just need to observe that
\[
\pr{A} = \left(\frac{\lfloor sn^{\frac{k}{k+1}} \rfloor - \sigma_n(s)}{n-1}\right)^k \geq \frac{1}{2}s^kn^{-\frac{k}{k+1}}
\]
by the definition of the choice random walk and working on the event $A_n(s)$. Note that the required independence assumptions to apply the induction hold by the Markov property. In the notation of \eqref{eqn:domination Aldous geometric} we set $r=in^{-\frac{k}{k+1}}$, and on the event $C$ we take $\eta_{r-1}$ to be a weighted average over the laws of $\psi_{r'}$ for $r' \in [s,r-1]$; these all satisfy the required domination by the inductive hypothesis (as does $\psi_{r-1}$ for the event $B$). 

Hence we deduce that 
\[
\prcond{D_n (s,t) \geq x}{A_n(s)}{} \leq \left(1-\frac{1}{2}s^kn^{-\frac{k}{k+1}}\right)^{\lfloor xn^{\frac{k}{k+1}}\rfloor } \leq e^{-\frac{x}{3}s^k},
\]
as claimed.
\end{proof}

\begin{corollary}(cf \cite[Lemma 6]{AldousCRTI})\label{cor:Aldous sup D interval}
    \[
    \pr{\sup_{e^j \leq t \leq e^{j+1}} D_n (e^j, t) > 3(k+2)je^{-jk}} \leq e^{k+3-j}.
    \]
\end{corollary}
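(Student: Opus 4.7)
The plan is to mimic the proof of the analogous stick-breaking result, Corollary 3.3, adapted to the discrete Aldous--Broder setup and the conditioning on the event $A_n(s)$ from Proposition 4.7.

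I set $s = e^j$, $u = e^{j+1}$, and define $\tau = \tau(x) = \inf\{t \geq s : D_n(s,t) \geq x\}$. The upper bound
\[
\E{\mathrm{Leb}\{s \leq t \leq u+1 : D_n(s,t) \geq x\} \mathbbm{1}_{A_n(s)}} \leq (u+1-s)e^{-\frac{x}{3} s^k}
\]
follows directly from Proposition 4.7 by Fubini, since $A_n(s)$ is measurable with respect to information up to time $s n^{k/(k+1)}$.

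The matching lower bound is obtained via the AB analogue of the "next Poisson point" trick. I define $C_r = n^{-k/(k+1)}\sigma_{\sigma_n(r)+1, n}$, the rescaled time of the next branch completion after step $\lfloor r n^{k/(k+1)} \rfloor$. The key observation is that during a single branch, i.e. for $t \in [\tau, C_\tau]$, each step adds a new vertex to the AB tree via a first-entry edge, so $d_n(X_{\lfloor t n^{k/(k+1)}\rfloor}, \TABn(s n^{k/(k+1)}))$ is non-decreasing in $t$ on this interval. Hence $\{t \in [\tau, C_\tau]\} \subset \{D_n(s,t) \geq x\}$. Since the instantaneous probability of closing a loop is at most $((u+1)n^{k/(k+1)}/n)^k = (u+1)^k n^{-k/(k+1)}$ when the walk has visited at most $(u+1)n^{k/(k+1)}$ vertices, the quantity $C_t - t$ is stochastically bounded below by $n^{-k/(k+1)}$ times a geometric random variable, yielding
\[
\E{C_t \wedge (u+1) - t} \geq c(u+1)^{-k}\left(1 - e^{-(u+1)^{k+1}}\right)
\]
for all sufficiently large $n$, uniformly in $t \leq u$. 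By the strong Markov property applied at $\tau$ (conditionally on $A_n(s)$), combining the two bounds gives
\[
\pr{\tau \leq u+1, A_n(s)} \leq c^{-1} e^{2k+2} s^{k+1} e^{-\frac{x}{3} s^k},
\]
in direct analogy with the SB argument.

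Substituting $x = 3(k+2) j e^{-jk}$ makes $\frac{x}{3} s^k = (k+2)j$ while $s^{k+1} = e^{j(k+1)}$, so the bound collapses to $C(k) e^{-j}$, which is dominated by $\tfrac{1}{2}e^{k+3-j}$ for all $k$. Finally I add $\pr{A_n(s)^c} \leq \exp(s(e-1) - s n^{1/(k+1)})$ from Proposition 4.7, which is negligible compared to $e^{k+3-j}$ for all $n$ sufficiently large (say $n^{1/(k+1)} \geq e$), absorbing it into the constant. The main obstacle is bookkeeping: keeping the conditioning on $A_n(s)$ usable across the strong Markov step, and making sure that the geometric-in-$n$ lower bound on $\E{C_t \wedge (u+1) - t}$ holds uniformly over the relevant $t$ with a constant independent of $n$, so that the factor $c^{-1}e^{2k+2}$ above is genuinely finite and independent of $n$.
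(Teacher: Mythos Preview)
Your argument mirrors the paper's proof exactly: the same first-moment sandwich on $\mathrm{Leb}\{t: D_n(s,t)\geq x\}$, the same use of the next branch-completion time, the same strong Markov step, and the same handling of $A_n(s)$ (the paper absorbs $\pr{A_n(s)^c}$ into the numerator before dividing, rather than adding it at the end as you do, but this is immaterial).

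There is one slip in the constant bookkeeping. With your lower bound $\E{C_t\wedge(u+1)-t}\geq c(u+1)^{-k}(1-e^{-(u+1)^{k+1}})$ and $c$ of order~$1$, dividing gives a prefactor of size $(u+1-s)(u+1)^k \leq (2es)^{k+1}$, so after substituting $x=3(k+2)je^{-jk}$ you obtain a bound of order $e^{2k+2-j}$; your claim that this is ``dominated by $\tfrac12 e^{k+3-j}$ for all $k$'' fails already at $k=2$. The paper instead bounds the loop-closing probability by $(u\, n^{k/(k+1)}/n)^k$ (using $u$ rather than $u+1$), giving $u^{-k}$ in the denominator and hence the tighter prefactor $(u+1-s)u^k\leq e^{k+1}s^{k+1}$, from which $e^{k+3-j}$ follows after absorbing the factor of~$2$ from $(1-e^{-\cdots})^{-1}$. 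This is purely cosmetic: your argument does prove the corollary with $e^{k+3}$ replaced by a larger constant depending only on~$k$, and that is all that is needed downstream in Corollary~\ref{cor:AB tightness}.
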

\begin{proof}
Set $s=e^j, u=e^{j+1}$ and
\[
\tau_n = \tau_n(x) = \inf\{t \geq s: D(s,t) \geq x\}.
\]
On the event $A_n(s)$ we have by Proposition \ref{prop:tightness AB exp tail} that, for any $s<u$ and any $x>0$:
    \begin{align}\label{eqn:Aldous exp bound interval UB}
        \E{\mathbbm{1}\{A_n(s)\} \textsf{Leb} \{s \leq t \leq u+1: D(s,t) \geq x\}} \leq (u+1-s)e^{-\frac{x}{3}s^k}.
    \end{align}
However, by the same logic as in the proof of Proposition \ref{prop:tightness AB exp tail}, it also follows that 
\begin{align*}
\{\lfloor t n^{\frac{k}{k+1}} \rfloor \in [n^{\frac{k}{k+1}}\tau_n, \sigma_{\sigma_n(\tau_n)+1,n}]\} \subset \{D(s,t) \geq x\}.
\end{align*}
Hence
\begin{align}\label{eqn:Aldous exp bound interval LB}
\begin{split}
 &\E{\mathbbm{1}\{A_n(s)\} \textsf{Leb} \{s \leq t \leq u+1: D(s,t) \geq x\}} \\
 &\geq \E{\mathbbm{1}\{\tau_n \leq u+1\} (\sigma_{\sigma_n(\tau_n)+1,n} \wedge (u+1) - n^{\frac{k}{k+1}}\tau_n)} \\
 &\geq \pr{\tau_n \leq u+1}\econd{(\sigma_{\sigma_n(\tau_n)+1,n}\wedge (u+1)  - n^{\frac{k}{k+1}}\tau_n)}{\tau_n \leq u+1} - \pr{A_n(s)^c}.
 \end{split}
\end{align}
Now note that, for any $t$, 
\begin{align*}
    \E{(\sigma_{\sigma_n(t)+1,n}\wedge (u+1)  - n^{\frac{k}{k+1}}t)} &\geq \int_0^{u+1} \pr{ (\sigma_{\sigma_n(t)+1,n}\wedge (u+1)  - n^{\frac{k}{k+1}}t) >s} ds \\
    &\geq \int_0^{u+1} \pr{\textsf{Poi}(ru^k)=0} dr \\
    &\geq  u^{-k}(1-e^{-(u+1)u^k}).
\end{align*}
By the strong Markov property, we can substitute this into \eqref{eqn:Aldous exp bound interval LB} and then compare to \eqref{eqn:Aldous exp bound interval UB} to deduce that
\begin{align*}
    \pr{\tau_n \leq u+1} \leq \frac{(u+1-s)e^{-\frac{x}{3}s^k} + \pr{A_n(s)^c}}{u^{-k}(1-e^{-(u+1)u^k})} \leq e^{k+2} s^{k+1} (e^{-\frac{x}{3}s^k}+e^{-n^{\frac{k}{k+1}}s}).
\end{align*}
In particular, reintroducing the dependence on $x$ and taking $x=\frac{3(k+2)\log s}{s^k}$ gives
\begin{align*}
    \pr{\tau_n\left(\frac{3(k+2)\log s}{s^k}\right) \leq u+1} \leq e^{k+3} s^{-1} = e^{k+3-j}.
\end{align*}
\end{proof}

This allows us to deduce the following.

\begin{corollary}\label{cor:AB tightness}
For all $\epsilon>0$,
\[
\lim_{i \to \infty} \limsup_{n \geq 1} \pr{n^{-\frac{k}{k+1}
} d_{GH}(\TABin, \TABn) \geq \eps} = 0.
\]
\end{corollary}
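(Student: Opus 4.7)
My plan is to mimic the argument of \cref{cor:stick breaking conv} from the stick-breaking setting. There are two ingredients. First, I will use \cref{cor:Aldous sup D interval} to obtain a uniform-in-$n$ control on the Hausdorff distance from $\TABn$ to the partial tree $\TABn(e^J n^{k/(k+1)})$ obtained by running the algorithm up to time $e^J n^{k/(k+1)}$. Second, I will use \cref{claim:coupling of stick breaking} to ensure that once $i$ is large enough, $\TABin$ already contains $\TABn(e^J n^{k/(k+1)})$ with high probability. Combined with monotonicity of $d_H$ under the inclusions $\TABn(e^J n^{k/(k+1)}) \subseteq \TABin \subseteq \TABn$ and the standard bound $d_{GH} \leq d_H$, these two ingredients yield the corollary.

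For the first ingredient, fix $\eps,\eta > 0$ and choose $J$ so that $\sum_{j \geq J} 3(k+2) j e^{-jk} < \eps$ and $\sum_{j \geq J} e^{k+3-j} < \eta/2$; then sum the conclusion of \cref{cor:Aldous sup D interval} over $j \geq J$ in a union bound (the direct analogue of \eqref{eqn:SB D total distance}). This yields
\[
\pr{\sup_{t \geq e^J} D_n(e^J, t) > \eps} \leq \eta/2 \qquad \forall\,n \geq 1,
\]
and on the complementary event every vertex of $\TABn$ first visited after time $e^J n^{k/(k+1)}$ lies within rescaled distance $\eps$ of $\TABn(e^J n^{k/(k+1)})$, so $n^{-k/(k+1)} d_H(\TABn(e^J n^{k/(k+1)}), \TABn) \leq \eps$.

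For the second ingredient, I would apply \cref{claim:coupling of stick breaking} with $\beta = k$, $\gamma = k-1$: recalling $Y_i = n^{-k/(k+1)} |\TABin|$ from \eqref{eqn:Yi Zi def}, for each fixed $i$ and all sufficiently large $n$ I can couple $Y_i$ with the stick-breaking variable $Y_i'$ so that $|Y_i - Y_i'| \leq 1$ except on an event of probability $\leq \eta/4$. Since $Y_i'$ is the $i$-th point of a Poisson process of intensity $t^k dt$ on $[0,\infty)$, $Y_i' \to \infty$ almost surely, so I can choose $i_0$ with $\pr{Y_{i_0}' \geq e^J + 2} \geq 1 - \eta/4$. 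Hence $\pr{\sigma_{i,n} \geq e^J n^{k/(k+1)}} \geq 1 - \eta/2$ for every $i \geq i_0$ and all $n$ sufficiently large. Intersecting with the good event from the first ingredient then shows $\limsup_n \pr{n^{-k/(k+1)} d_{GH}(\TABin, \TABn) \geq \eps} \leq \eta$ for every $i \geq i_0$, and since $\eta$ was arbitrary the corollary follows. The only subtlety --- not really an obstacle --- is that the coupling in \cref{claim:coupling of stick breaking} requires $n$ large relative to $i$, while the tail bound from \cref{cor:Aldous sup D interval} is genuinely uniform in $n$; this asymmetry is absorbed by the $\limsup_n$ in the statement.
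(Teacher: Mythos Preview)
Your proof is correct. The first ingredient---summing the bounds of \cref{cor:Aldous sup D interval} over $j\ge J$ to control $d_H(\TABn(e^J n^{k/(k+1)}),\TABn)$ uniformly in $n$---is exactly what the paper does.

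The two proofs diverge at the second ingredient, namely showing that $\TABin\supseteq\TABn(e^J n^{k/(k+1)})$ (equivalently $\sigma_{i,n}\ge e^J n^{k/(k+1)}$) with high probability once $i$ is large. The paper argues directly: the number of self-intersections of the choice walk in the time window $[0, e^J n^{k/(k+1)}]$ is stochastically dominated by a Binomial with mean of order $e^{J(k+1)}$, so by Markov's inequality $\pr{\sigma_{i,n}< e^J n^{k/(k+1)}}\le C e^{J(k+1)}/i$, uniformly in $n$. You instead route through \cref{claim:coupling of stick breaking}, coupling $Y_i=n^{-k/(k+1)}|\TABin|$ to the Poisson stick-breaking point $Y_i'$ and then using $Y_i'\to\infty$; the implicit step $Y_i\ge e^J+1\Rightarrow \sigma_{i,n}\ge e^J n^{k/(k+1)}$ is justified by the trivial inequality $\sigma_{i,n}+1\ge |\TABin|$, and monotonicity of $\sigma_{i,n}$ in $i$ extends the bound from $i_0$ to all $i\ge i_0$. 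Your route is slightly less elementary and only kicks in for $n$ large (which, as you note, is harmless given the $\limsup_n$), while the paper's Binomial bound is self-contained and genuinely uniform in $n$; on the other hand, your argument reuses machinery already in place and avoids any new computation.
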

\begin{proof}
By summing over $j \geq J$ and applying a union bound we deduce from Corollary \ref{cor:Aldous sup D interval} that
 \[
    \pr{\sup_{e^J \leq t} D_n (e^J, t) > 3(k+2)e^ke^{-J(k-1)}} \leq \pr{\exists j \geq J: \sup_{e^j \leq t \leq e^{j+1}} D_n (e^j, t) > 3(k+2)je^{-jk}} \leq e^{k+3-J}.
    \]
In particular this implies that $ \pr{d_{GH}(\TABn(e^J), \TABn) \geq 3(k+2)e^ke^{-J(k-1)}}$ enjoys the same upper bound. Given $\eps>0$, we can therefore pick $J$ large enough that $3(k+2)e^ke^{-J(k-1)} \vee e^{k+3-J} < \eps$. Then 
\begin{align*}
    \pr{d_{GH}(\TABin, \TABn) \geq \eps} \leq \eps +\pr{\sigma_{i,n} \geq e^Jn^{\frac{k}{k+1}}}.
\end{align*}
Note that, by considering intersection times in the time interval $[\frac{1}{2}e^Jn^{\frac{k}{k+1}}, e^Jn^{\frac{k}{k+1}}]$, we have by Markov's inequality that
\[
\pr{\sigma_{i,n} \geq e^Jn^{\frac{k}{k+1}}} \leq \pr{\textsf{Binomial}\left(\frac{1}{2}e^Jn^{\frac{k}{k+1}}, (\frac{1}{2}e^Jn^{\frac{k}{k+1}}-i)^kn^{-k}\right) \geq i} \leq \frac{e^{2J}}{i}.
\]
In particular, since $J$ is already fixed we can now choose $i$ large enough that this is less than $\eps$ and we deduce that, for all large enough $i$, 
\begin{align*}
    \pr{d_{GH}(\TABin, \TABn) \geq \eps} \leq 2\eps.
\end{align*}
In particular, since $\eps>0$ was arbitrary this implies the result.
\end{proof}

\subsection{Proof of Theorem \ref{thm:AB convergence} (from GH to GHP)}

To complete the proof, we first note that it follows from Corollary \ref{cor:stick breaking conv}, Proposition \ref{prop:finite trees converge AB} and Corollary \ref{cor:AB tightness} that the convergence of Theorem \ref{thm:AB convergence} holds with respect to the GH topology. It remains to add the Prokhorov convergence. 

By \cite[Theorem 4.1]{khezeli2020metrization}, the space of boundedly compact pointed metric space is Polish, hence it follows from the Skorokhod Representation theorem that we can assume that the GH convergence of the previous subsection occurs almost surely (although we did not explicitly add a pointed vertex to our space, we could do this by setting the roots to be $\TAB_{0,n}$ and $\rho(0)$). In particular this implies that almost surely, there exists an embedding of all the spaces $(n^{-\frac{k}{k+1}}\TABk (K_n))_{n \geq 1}$ and $\Tbg$ into a common compact metric space of the form  
\[
\Tbg \sqcup \bigsqcup_{n \geq 1} n^{-\frac{k}{k+1}}\TABk (K_n)
\]
such that the spaces converge almost surely with respect to the Hausdorff metric. Moreover, by Corollary \ref{cor:AB tightness} we can assume that for all $i \geq 1$, $d_{H}(\TABin, \TABn) \to 0$, and more specifically that for all $i$ and any fixed $\eps>0$, the conditions of Proposition \ref{prop:stick breaking close} are eventually satisfied for all sufficiently large $n$.

We let $\mu_i$ denote the measure on $T^{(i)}$ considered in Proposition \ref{prop:measure conv SB}. We also let $\nu_{i,n}$ denote the restriction of the measure $\nu_n$ on $\TABk (K_n)$ to $\TABin$.

\begin{lemma}\label{lem:tightness measures AB}
Almost surely,
    \[
\lim_{i \to \infty} \limsup_{n \to \infty} d_P(\nu_{i,n}, \nu_n) = 0.
\]
\end{lemma}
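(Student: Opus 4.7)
The strategy mirrors the proof of Proposition \ref{prop:measure conv SB}, transferring the Polya-urn martingale argument to the discrete setting by means of the coupling of Proposition \ref{claim:coupling of stick breaking} and the glueing estimates of Proposition \ref{prop:main stick breaking ingredient}(2). First observe that, since $\nu_{i,n}$ is the sub-probability measure obtained by restricting $\nu_n$ to $\TABin$, one checks directly from the definition of $d_P$ that
\[
d_P(\nu_{i,n}, \nu_n) = \nu_n(\TABn \setminus \TABin);
\]
indeed the first defining inequality is automatic since $\nu_{i,n}\leq \nu_n$, and the second, tested against $A = \TABn$, forces $\epsilon \geq 1 - \nu_n(\TABin)$, while this value of $\epsilon$ also suffices for any closed $A$ (using $A \subseteq A^\epsilon$). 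Hence it suffices to show that $\nu_n(\TABn \setminus \TABin) \to 0$ in probability as $i \to \infty$, uniformly in large $n$.

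For this, we introduce for each $j \geq 1$ an intermediate probability measure $\tilde \nu_{j,n}$ on the partial tree $\TABk_{j,n}$ that plays the role of the continuous $\mu_j$: each vertex is weighted by the appropriate power of its first-visit time so as to match the density $ku^{k-1}/Y_j^k$ of $\mu_j$. Using Proposition \ref{prop:main stick breaking ingredient}(2) one verifies that the glueing point $X_{\sigma_{j,n}}$ of the $(j+1)$-th branch is distributed according to $\tilde\nu_{j,n}$ up to $o(1)$ errors as $n \to \infty$, and moreover $\nu_n = \tilde \nu_{N_n, n}$ where $N_n$ denotes the total number of branches. For any $A \subset \TABk_{i,n}$, the same urn calculation as in \eqref{eqn:measure MG prop} then shows that $(\tilde \nu_{j,n}(A^\uparrow))_{j \geq i}$ is an approximate martingale: the probability that the new glueing point lies in $A^\uparrow$ equals $\tilde \nu_{j,n}(A^\uparrow)$ to leading order, which exactly balances the mass dilution caused by the freshly added vertices.

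The variance bound \eqref{eqn:urn variance bound}, combined with Proposition \ref{prop:main stick breaking ingredient}(1) (which gives $\sigma_{j+1,n}-\sigma_{j,n}$ of rescaled size $Y_{j+1}-Y_j$), yields variance increments of order $(Y_{j+1}-Y_j)^2/Y_j^2 \sim j^{-2}$, using $Y_j \sim j^{1/(k+1)}$ from the Poisson structure; in particular these are summable in $j$. Combining this with the coupling of Proposition \ref{claim:coupling of stick breaking} and the continuous convergence established in Proposition \ref{prop:measure conv SB} shows that $(\tilde \nu_{j,n})_{j \geq i}$ is Cauchy in Prokhorov distance with a rate uniform in $n$, so that $d_P(\nu_n, \tilde \nu_{i,n}) = O(i^{-1/2})$ plus small discrete-to-continuous errors. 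Since $\tilde \nu_{i,n}$ is supported on $\TABin$, this yields $\nu_n(\TABn \setminus \TABin) = o_i(1)$ as claimed.

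The main technical obstacle is the uniform control of the $o(1)$ errors from Proposition \ref{prop:main stick breaking ingredient}(2) along the whole sequence of branches, up to the random cutoff $N_n$, which should be handled via the exponential tail bounds of Proposition \ref{prop:tightness AB exp tail} together with the a priori bound $\tilde \nu_{j,n}(A^\uparrow) \in [0,1]$.
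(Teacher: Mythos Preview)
Your opening reduction is the main gap. You treat $\nu_{i,n}$ as the \emph{unnormalized} restriction of $\nu_n$ to $\TABin$ and deduce $d_P(\nu_{i,n},\nu_n)=\nu_n(\TABn\setminus\TABin)$. But with that reading the lemma is false: for fixed $i$ the mass $\nu_n(\TABin)$ is of order $(\sigma_{i,n}/\tau_{\mathrm{cov}})^{\gamma+1}$, and since $\sigma_{i,n}\asymp n^{k/(k+1)}$ while $\tau_{\mathrm{cov}}\gtrsim n$, this tends to $0$ as $n\to\infty$, so $\nu_n(\TABn\setminus\TABin)\to 1$. The paper's $\nu_{i,n}$ is the \emph{renormalized} probability measure on $\TABin$ (this is forced by the identification $\nu_{j,n}(A^{\uparrow})=U_j/(U_j+V_j)$ in the proof), i.e.\ exactly what you call $\tilde\nu_{i,n}$. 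Your closing implication is likewise invalid: small Prokhorov distance between $\nu_n$ and a probability measure supported on $\TABin$ does \emph{not} give $\nu_n(\TABn\setminus\TABin)$ small, because after Hausdorff convergence the $\epsilon$-fattening of $\TABn\setminus\TABin$ can swallow all of $\TABin$.

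Once the erroneous bookends are removed, your middle paragraph is essentially the paper's argument, but you weaken it unnecessarily. The sequence $(\nu_{j,n}(A^{\uparrow}))_{j\ge i}$ is an \emph{exact} martingale, not an approximate one: by construction the attachment law at step $j$ \emph{is} $\nu_{j,n}$, so the Polya identity \eqref{eqn:urn variance bound} applies directly with $\Delta_{j+1}$ the mass of the new branch. There are no $o(1)$ errors to control uniformly in $j$, and no need to invoke the coupling of Proposition~\ref{claim:coupling of stick breaking} or the continuous convergence of Proposition~\ref{prop:measure conv SB}. The paper simply bounds $\sum_{j\ge i}\E{(\Delta_{j+1}/(U_j+V_j))^2}$ by a direct discrete computation: the geometric tail of $|\TABjjn|-|\TABjn|$ conditional on $|\TABjn|$ gives $\E{(\Delta_{j+1}/(U_j+V_j))^2\mid\TABjn}\le Cn^{2k}/|\TABjn|^{2(k+1)}$, and a third-moment Markov bound on $|\TABjn|$ turns this into $C/j^2$. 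Doob's $L^2$ inequality plus an $\epsilon$-cover of $\TABin$ then finishes, with no reference to the limiting tree.
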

\begin{proof}
By analogy with the stick-breaking limit, we fix a parameter $\gamma$ which will be equal to $k-1$ in the maximal case, and $0$ in the uniform case. Fix $i$, condition on $\TABin$ and take a subset $A \subset \TABin$. Consider the evolution of an urn model as in Section \ref{sctn:urn} starting at
\[
(U_i, V_i) = \left(\sum_{t \leq \sigma_{i,n}} t^{\gamma} \mathbbm{1}\{X_t \in A, \nexists s < t \text{ such that } X_s = X_t\}, \sum_{t \leq \sigma_{i,n}} t^{\gamma} \mathbbm{1}\{X_t \notin A, \nexists s < t \text{ such that } X_s = X_t\}\right),
\]
and note that the dynamics are exactly so that 
\[
\left(\frac{U_j}{U_j+V_j}, \frac{V_j}{U_j+V_j}\right)_{i \leq j \leq n} \overset{(d)}{=} \left(\nu_{j,n}(A^{\uparrow}), 1-\nu_{j,n}(A^{\uparrow})\right)_{i \leq j \leq n},
\]
where $A^{\uparrow}$ denotes the set of descendants of $A$ (with respect to the $\ell^1$ embedding; recall we say that $x$ is a \textbf{descendant} of $y$ if $x$ if the representative of $x$ in $\ell_1$ is the concatenation of that of $y$ and another vector). In particular, the sequence $(\nu_{j,n}(A^{\uparrow}))_{j \geq i}$ is a martingale and moreover we claim that
\begin{align}\label{eqn:measure conv sup}
    \prcond{\max_{j>i} |\nu_{j,n}(A^{\uparrow}) - \nu_{i,n}(A^{\uparrow})| \geq \delta }{\TABin}{} \leq C_{i,n} \delta^{-2},
\end{align}
where $\sup_{j \geq i} \sup_n C_{j,n} \to 0$ in probability as $i \to \infty$. To see this, we apply Doob's $L^2$ inequality, which implies that
\[
\econd{\left(\max_{i \leq j \leq n} |\nu_{j,n}(A^{\uparrow}) - \nu_{i,n}(A^{\uparrow})|\right)^2}{\TABin} \leq 4 \econd{|\nu_{n,n}(A^{\uparrow}) - \nu_{i,n}(A^{\uparrow})|^2}{\TABin} = 4 \Var \left({\nu_{n,n}(A^{\uparrow})} \middle | {\TABin} \right).
\]
By \eqref{eqn:urn variance bound}, we have that
\begin{align*}
\Var \left({\nu_{n,n}(A^{\uparrow})} \middle | {\TABin} \right) \leq \sum_{i \leq j \leq n} \econd{ \left( \frac{\Delta_{j+1}}{U_j + V_j} \right)^2}{\TABin}{} = \sum_{i \leq j \leq n} \econd{ \left( \frac{|\TABjjn|^{\gamma+1} - |\TABjn|^{\gamma+1}}{|\TABjn|^{\gamma+1}} \right)^2}{\TABin}.
\end{align*}
To bound the latter sum, note that, conditionally on $\TABjn$, the quantity $|\TABjjn| - |\TABjn|$ is dominated by a geometric law with parameter $\left(\frac{|\TABjn|}{n}\right)^k$, and hence
\begin{align*}
    \prcond{|\TABjjn|^{\gamma+1} - |\TABjn|^{\gamma+1} \geq x}{|\TABjn| }{} &\leq \prcond{|\TABjjn| \geq (|\TABjn|^{\gamma+1}+ x)^{1/(\gamma+1)}}{|\TABjn| }{} \\
    &\leq \prcond{|\TABjjn| - |\TABjn| \geq x |\TABjn|^{-\gamma}}{|\TABjn| }{} \\
    &\leq \left(1- \left(\frac{|\TABjn|}{n}\right)^k \right)^{x|\TABjn|^{-\gamma}} \\
    &\leq \exp \left\{- \left(\frac{|\TABjn|^{k-\gamma}}{n^k}\right) x\right\}.
\end{align*}
In particular, there exists $C<\infty$ so that 
\[
\econd{(|\TABjjn|^{\gamma+1} - |\TABjn|^{\gamma+1})^2}{|\TABjn|} \leq \frac{Cn^{2k}}{|\TABjn|^{2(k-\gamma)}} \qquad
\]
and hence
\[
\econd{\frac{(|\TABjjn|^{\gamma+1} - |\TABjn|^{\gamma+1})^2}{|\TABjn|^{2(\gamma+1)}}}{|\TABjn|} \leq \frac{Cn^{2k}}{|\TABjn|^{2(k+1)}} 
\]
Note that $|\TABjn|$ is typically of order $j^{\frac{1}{k+1}}n^{\frac{k}{k+1}}$, so we expect that
\[
\sum_{i \leq j \leq n} \econd{ \left( \frac{|\TABjjn|^{\gamma+1} - |\TABjn|^{\gamma+1}}{|\TABjn|^{\gamma+1}} \right)^2}{\TABin} \approx 
\sum_{i \leq j \leq n} \frac{C}{j^2}.
\]
To indeed bound the expectations, note that
\begin{align*}
\prcond{|\TABjn| < yj^{\frac{1}{k+1}}n^{\frac{k}{k+1}}}{\TABin}{} \leq \pr{\textsf{Binomial} \left(yj^{\frac{1}{k+1}}n^{\frac{k}{k+1}}, \left(\frac{yj^{\frac{1}{k+1}}n^{\frac{k}{k+1}}}{n}\right)^k\right) > j} \leq y^{3(k+1)}
\end{align*}
by Markov's inequality (with the third moment). Hence there exists $C<\infty$ so that the expectation of the $j^{th}$ term is upper bounded by $\frac{C}{j^2}$ for all $j, n \geq 1$ and we deduce that the sum tends to $0$ as $i \to \infty$, uniformly over $n$. In particular this establishes \eqref{eqn:measure conv sup}. Since we already know that the trees converge in the Hausdorff distance in the sense of \cref{cor:stick breaking conv}, \cref{prop:finite trees converge AB} and \cref{cor:AB tightness}, standard techniques then allow us to take an $\eps$-cover of $\TABin$ and apply \eqref{eqn:measure conv sup} to all sets in the cover in order to obtain the result of the lemma (we refer to \cite[around Equation (40)]{AldousCRTI} for an example).
\end{proof}

It just remains to tie everything together.

\begin{proof}[Proof of Theorem \ref{thm:AB convergence}]    
By Proposition \ref{prop:measure conv SB}, we know that $d_P(\mu_i, \mu) \to 0$ as $i \to \infty$. By Lemma \ref{lem:tightness measures AB}, we know that
\[
\lim_{i \to \infty} \limsup_{n \to \infty} d_P(\nu_{i,n}, \nu_n) = 0.
\]
Hence it is sufficient to show that, for each fixed $i \geq 1$, $d_P(\nu_{i,n}, \mu_i) \to 0$ almost surely. This is in fact a straightforward consequence of Proposition \ref{prop:Prokh conv measure on sticks} and Proposition \ref{prop:stick breaking close}.
\end{proof}

 \section{Equivalence of choice trees: proof of \cref{thm:choice trees the same}} \label{sec: wilson trees}

Recall the definition of Wilson's algorithm with choice as defined in \cref{sec: Wilson algorithm trees}. By ignoring steps at which $v_{i+1}$ are already in $T_i$, this algorithm allows us to sample the partial Wilson choice tree step by step, such that the number of leaves grows by one within each step. We will compare each such step to adding a new branch in the \AB algorithm and show that given that the trees built until this step are of the same size, the distribution of the length of every such branch is the same in both algorithms.

That is, fix $n$ and write $T_j$ for the partial tree constructed after running $j$ steps with non-zero branch lengths of Wilson's algorithm with choice (zero branch length can happen only if we start our choice loop erased random walk from a vertex $v$ which is already in the tree). We will write $Y$ for the maximal choice loop-erased random walk used in the construction (i.e. as in \cref{sec: Wilson algorithm trees}), started at some $v \notin T_j$ and terminated when hitting $T_j$; we write $\tau$ for the first time the walk hits $T_j$. It follows by construction that $\tau$ is distributed exactly as the length of the $(j+1)^{th}$ (non-zero length) branch of the tree.

To prove \cref{thm:choice trees the same}, we claim that it is sufficient to prove the following proposition (we assume that $k \geq 1$ is already fixed).

\begin{proposition}
\label{prop:main stick breaking ingredient Wilson}
For every $n\in \NN$ and $i, j, m \leq n$ we have that
\[
\prcond{\tau > i+1}{\tau>i, |T_j| = m}{} = 1 - \left( \frac{ m+ i+1}{n} \right)^k.
\]

\end{proposition}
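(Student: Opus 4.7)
The plan is a direct one-step conditional probability computation. The central structural observation is that, conditional on the event $\{\tau > i,\, |T_j| = m\}$, the walk positions $Y_0, Y_1, \ldots, Y_i$ are $i + 1$ distinct vertices, none of them in $T_j$, and so $|T_j \cup \{Y_0, \ldots, Y_i\}| = m + i + 1$. This comes from unpacking the maximal choice rule \eqref{def:choice RW min} applied with $\Ter = T_j$ and $\Av_n = \LE(Y_0, \ldots, Y_n)$: at every step the walk strictly prefers a vertex outside $\Ter \cup \Av_n$, and only if all $k$ of the sampled choices lie in $\Ter \cup \Av_n$ does it either close a loop onto $\Av_n$ or enter $T_j$. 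Under the identification of $\tau$ with the length of the branch added at this stage of Wilson's algorithm, asserted right before the proposition, neither of these fallback events can have taken place strictly before time $\tau$; hence every one of the first $i$ steps is a strict loop-erasure extension by a single new vertex, which forces distinctness of $Y_0, \ldots, Y_i$ and their disjointness from $T_j$.

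Given this, I would condition further on the initial segment $(Y_0, \ldots, Y_i)$ and inspect the next walk step. Since $K_n$ has self-loops, the $k$ choices at step $i+1$ are sampled i.i.d.\ uniformly over the $n$ vertices of $K_n$, independently of the past. The event $\{\tau > i+1\}$ holds exactly when the next step is again a loop-erasure extension, equivalently when at least one of the $k$ choices lies outside $T_j \cup \{Y_0, \ldots, Y_i\}$; otherwise all $k$ choices land in this set of size $m + i + 1$ and the step either closes a loop or enters $T_j$, in both cases forcing $\tau \leq i + 1$. By independence of the $k$ choices and the cardinality count from the previous paragraph,
\[
\prcond{\tau > i+1}{\tau > i,\, |T_j| = m}{} \;=\; 1 - \left(\frac{m + i + 1}{n}\right)^k,
\]
which is exactly the claimed identity.

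The main point that needs care is the structural observation of the first paragraph, specifically the claim that $\tau > i$ rules out self-intersections of the walk (the second case of \eqref{def:choice RW min}) and not just direct $T_j$-hits (the third case). This is the delicate step, since the maximal choice walk is a priori allowed to close loops onto $\Av_n$ when no fresh option is available; reconciling this with the branch-length interpretation of $\tau$ is what locks the number of visited vertices at time $i$ to exactly $m + i + 1$. Once that reconciliation is made, the rest of the proof is the single line of algebra displayed above.
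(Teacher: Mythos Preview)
Your argument has a genuine gap: you are conflating the loop-erased random walk $Y$ (whose length is $\tau$) with the underlying maximal choice random walk used to produce it. Concretely, your ``structural observation'' --- that $\tau>i$ forces the first $i$ steps of the raw walk to be self-avoiding (no loop closures) --- is false. The maximal choice walk \emph{can} close a loop at some step $s$ (this happens whenever all $k$ options land in $\Av_s\cup T_j$ but not all in $T_j$), after which the loop-erasure shrinks, and the walk then continues and may well build up a final loop-erasure of length exceeding $i$ before hitting $T_j$. So the event $\{\tau>i\}$ tells you that the \emph{final} loop-erasure has at least $i+1$ vertices, not that the raw walk trajectory $X_0,\ldots,X_i$ is loop-free. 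Your single-step computation (``the $k$ choices at step $i+1$ are i.i.d.\ uniform over $K_n$'') therefore computes the wrong thing: the transition $Y_i\to Y_{i+1}$ of the loop-erasure is governed by a potentially long excursion of the choice walk, not by one draw of $k$ options.

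The paper's proof handles exactly this point via the Laplacian random walk representation \eqref{eqn:Laplacian RW classical formula}. It first argues (nontrivially) that this representation extends to the maximal choice walk --- using that the probability of hitting a fixed prefix $\{Y_0,\ldots,Y_i\}$ does not change as the walk evolves --- and then evaluates
\[
\prcond{Y_{i+1}\in T_j}{(Y_m)_{m=0}^{i}}{}\;=\;\frac{\prstart{X_1\in T_j}{Y_i}}{\prstart{\tau_{T_j}<\tau^+_{\{Y_0,\ldots,Y_i\}}}{Y_i}}\;=\;\frac{(m/n)^k}{(m/(m+i+1))^k}\;=\;\left(\frac{m+i+1}{n}\right)^k.
\]
That your one-step heuristic lands on the same numerical answer is precisely because the Laplacian ratio collapses so cleanly on $K_n$; but this collapse has to be \emph{proved}, and your argument as written does not supply it.
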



Indeed, it is straightforward to verify that, if $\hat{T}_j$ and $\hat{\tau}$ denote the analogous quantities in the \AB algorithm (i.e. only looking at non-empty branches), then $\prcond{\hat{\tau} > i+1}{\hat{\tau}>i, |\hat{T}_j| = m}{}$ is also given by the above expression. This enables us to conclude that the \AB choice trees and the Wilson choice trees have the same distribution of branch lengths in each of the partial spanning trees. Hence, we can couple the two processes so that the length of all branches are exactly the same. Furthermore, in both the maximal and uniform cases, the attachment laws are defined to be exactly the same. Hence we can conclude that the \AB choice tree and the Wilson choice tree have the same distribution for every $n\in \NN$.

Therefore, given \cref{prop:main stick breaking ingredient Wilson}, the result of \cref{thm:choice trees the same} is immediate. We will thus now give the proof of \cref{prop:main stick breaking ingredient Wilson}.

 \subsection{Laplacian random walk viewpoint}

For every $k$, in order to sample the Wilson $k$-choice trees, we have to analyze the corresponding loop-erased maximal choice random walk from some vertex $v$ to a predetermined set. In \cref{sec: lerw to rayleigh}, we analyzed loop erased choice random walks running forever, and proved that they are typically of length $\nkk$ (note however the slight distinction that in that case we took $\Ter = \emptyset$, whereas in Wilson's algorithm we take $\Ter = T_{j-1}$ when sampling the $j^{th}$ branch; in both cases we take $\Av_n = \LE (X[0,n])$, however).

We will prove \cref{prop:main stick breaking ingredient Wilson} using the \textit{Laplacian random walk representation.} 

By considering the last time that a random walk visits a specific vertex $v$ and following the same proof as in the classical case (see \cite[Exercise 4.1]{LyonsPeres}), it follows that the formula \eqref{eqn:Laplacian RW classical formula} is also valid for maximal choice random walks, using the fact that if $Y_0 , \ldots , Y_{k-1}$ is a prefix of a choice random walk, the probability to hit it does not change as the walk evolves (in contrast to uniform choice random walk, where the probability grows as the trace of the walk grows).

To ease calculations, we will consider the walk on the graph $K_n$ \textbf{with self loops}. We apply this at step $j+1$ of the algorithm; for this we let $Y$ denote the loop-erasure of a \textit{maximal} choice random walk $X$ run from a vertex $v$ to $T_{j}$, let $\tau^Y$ be the time at which $Y$ hits $T_{j}$, and let $\tau_A$ denote the hitting time of the set $A$ for the choice random walk $X$. Then \eqref{eqn:Laplacian RW classical formula} implies that
\begin{align*}
\prcond{\tau^Y = i+1}{\tau^Y>i}{} = \prcond{ Y_{i+1} \in T_{j}}{(Y_m)_{m=0}^{i}}{v} &= \prcond{ X_{1} \in T_j}{\tau_{T_j} < \tau^+_{\cup_{m=0}^{i} \{Y_m\}}}{Y_i} = \frac{\prstart{ X_{1} \in T_{j}}{Y_i}}{\prstart{\tau_{T_j} < \tau^+_{\cup_{m=0}^{i} \{Y_m\}}}{Y_i}}. 
\end{align*}
Now note that, at each step of the $k$-choice maximal random walk starting from $Y_i$ used in Wilson's algorithm, the probability of hitting $T_j$ on the next step is $\left( \frac{|T_j|}{n} \right)^k$, and the probability of hitting the union $T_j \bigcup \cup_{m=0}^{i} \{Y_m\}$ is $\left( \frac{|T_j\bigcup \cup_{m=0}^{i} \{Y_m\}|}{n} \right)^k$. Hence, for a \textbf{simple random walk} we have that 
\[
\prstart{\tau_{T_j} < \tau^+_{\cup_{m=0}^{i} \{Y_m\}}}{Y_i} = \left( \frac{|T_j|}{|T_j|+i+1} \right)^k, \qquad \prstart{ X_{1} \in T_{j}}{Y_i} = \left( \frac{|T_j|}{n} \right)^k,
\]
and substituting back we see that 
\[
\prcond{\tau^Y > i+1}{\tau^Y>i}{} = 1 - \left( \frac{|T_j|+ i+1}{n} \right)^k,
\]
as in \cref{prop:main stick breaking ingredient Wilson}.

We end the section with some simulations of (uniform) choice spanning trees.

\begin{figure}[h]
  \subfigure[UST ($1$-choice tree)]{\includegraphics[width=0.33\textwidth]{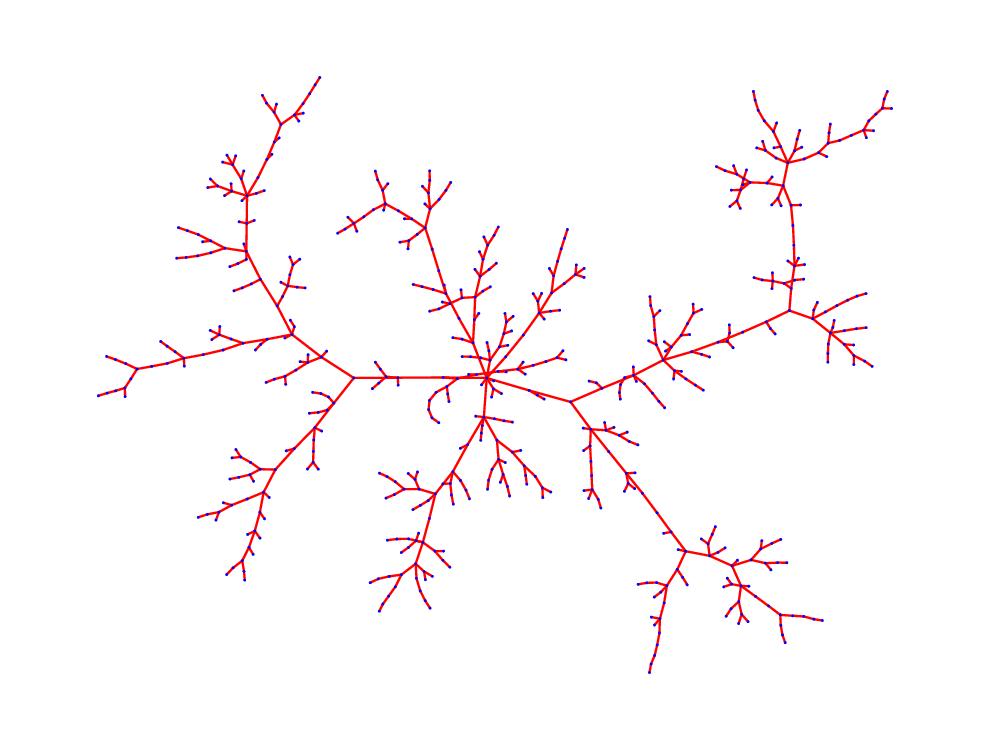}}
  \subfigure[$2$-choice tree]{\includegraphics[width=0.33\textwidth]{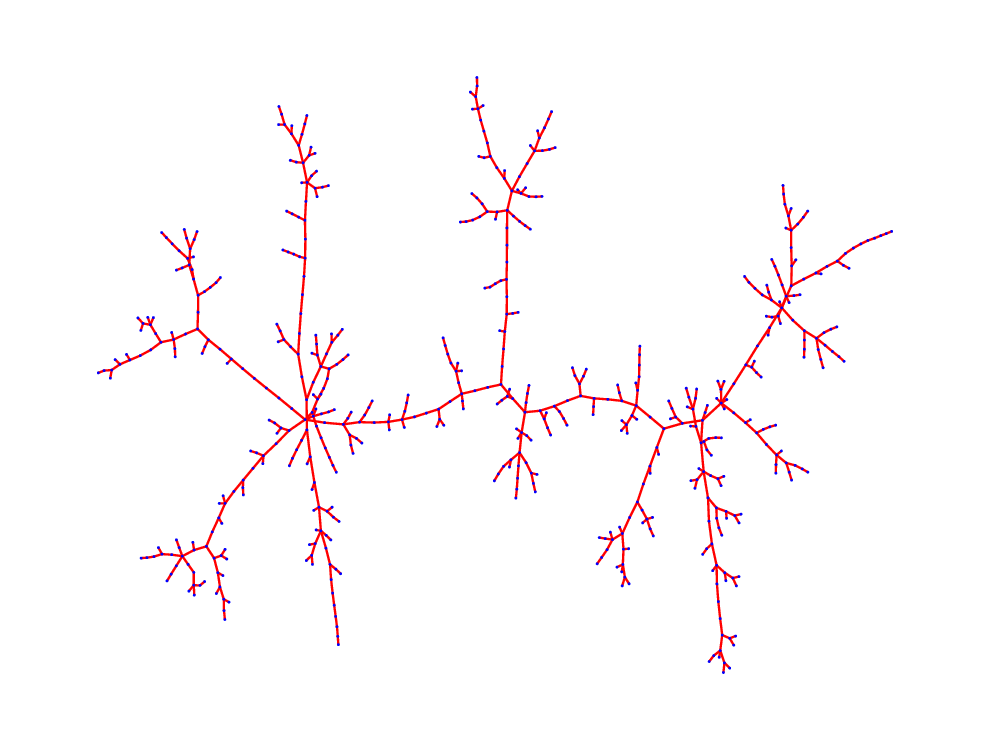}}
  \subfigure[$3$-choice tree]{\includegraphics[width=0.33\textwidth]{3choice.eps}}

  \subfigure[$4$-choice tree]{\includegraphics[width=0.5\textwidth]{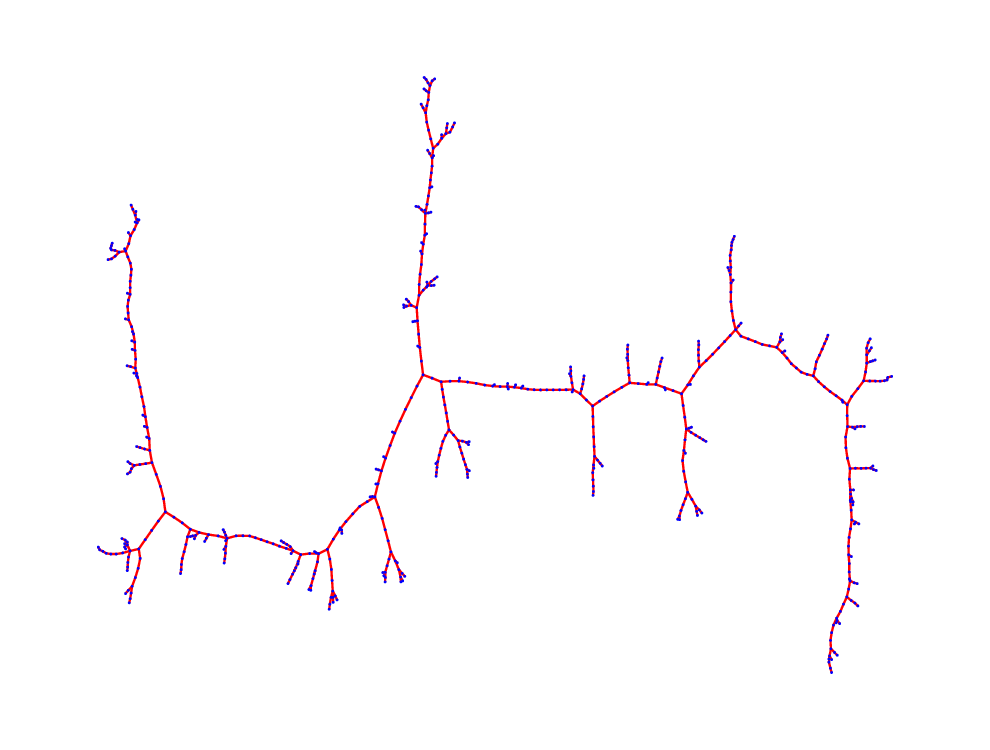}}
  \subfigure[$5$-choice tree]{\includegraphics[width=0.5\textwidth]{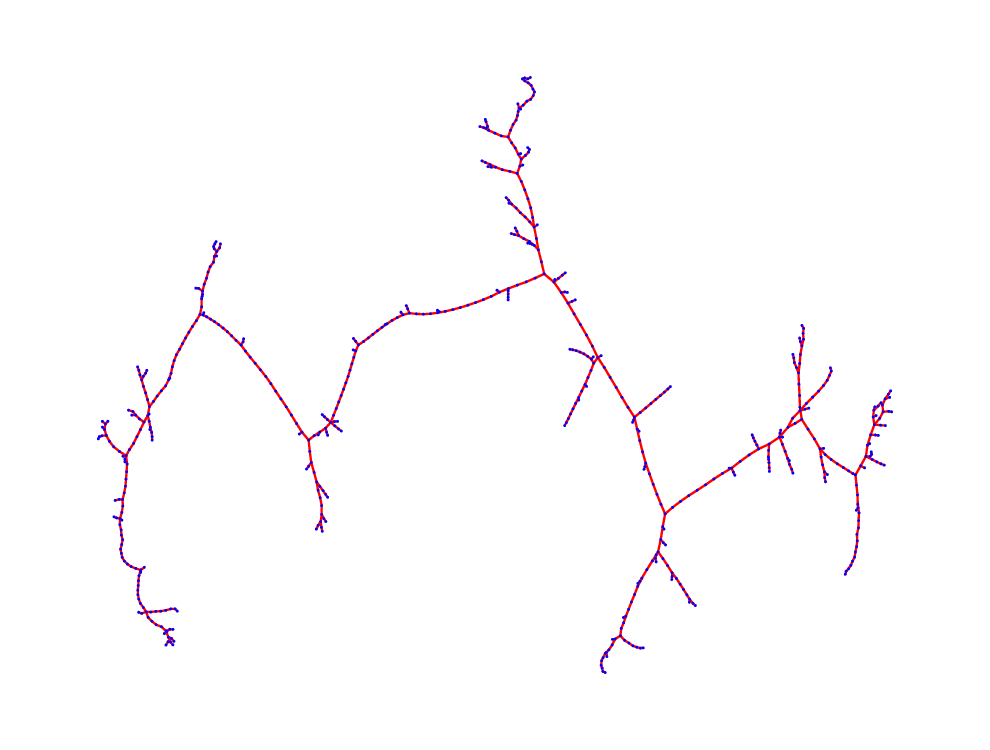}}
  \caption{Some choice trees}
  \label{fig:row1}
\end{figure}

 




\section{Open questions}\label{sctn:open questions}

In this question we list some open problems that follow on naturally from this work. We plan to address some of these in future papers.




In a first direction, an interesting generalization of the results in this paper concern the \AB chain. In the case of USTs, the \AB chain is obtained by running a random walk on $K_n$ beyond its cover time and letting $X_n(t)$ denote the set of last exit edges of the random walk up until time $t$. If $t \geq \tau_{\text{cov}}$, then $X_n(t)$ is a spanning tree and $X_{n+1}(t)$ is obtained from $X_n(t)$ by deleting an edge to leave two subtrees, and then re-attaching the two subtrees in a new location. The UST is the stationary measure for this Markov chain in the sense that if we consider the chain via $Y_n(t) = X_n(\tau_{\text{cov}}+t)$, then $Y_n(t)$ is distributed as $\UST(K_n)$ for all $t \geq 0$. It was moreover verified by Evans, Pitman and Winter \cite{EPW} that the whole chain $(Y_n(t))_{t \geq 0}$ converges in the scaling limit to an analogous chain on continuum trees know as \textit{root growth with re-grafting}, for which the stationary measure is the CRT.

A natural question is whether we can also take scaling limits of an analogous \AB chain constructed using choice random walks. In the case of the uniform attachment rule, the trees in fact evolve according the same rule as in the classical UST case after the cover time and it therefore follows that the whole chain again converges to the same root growth with re-grafting process of \cite{EPW}. However, we expect to see different behaviour for the maximal attachment rule.

\begin{question}
Let $Y_n(t)$ denote the set of last exit edges of a maximal $k$-choice random walk on $K_n$ at time $\tau_{\text{cov}}+t$. Does the chain $(n^{-\frac{k}{k+1}}Y_n(tn^{\frac{k}{k+1}}))_{t \geq 0}$ have a scaling limit, and can this be constructed via a root growth re-grafting type process applied to the tree $\T_{k, k-1}$? Is the maximal $k$-choice \AB tree stationary for the discrete process?
\end{question}
The first step in this direction (which should be straightforward) would simply be to verify whether we can replace the set of first entry edges in the $k$-choice \AB algorithm with the set of last exit edges, and still obtain a tree with the same law. 

In another direction, it would be interesting to consider \textit{local limits} rather than scaling limits. The local limit of $\UST(K_n)$ is well-known to be a \textsf{Poisson}($1$) Galton--Watson tree, conditioned to survive \cite{Grimmett_1980}.

\begin{question}
    What are the local limits of $\TABk_{\text{max}}(K_n)$ and $\TABk_{\text{unif}}(K_n)$?
\end{question}



Another question refers to \textit{universality} of these scaling limits. For classical USTs, the result that $\UST(K_n)$ rescales to the CRT is actually a special case of a much more general result: the CRT is also the scaling limit of $\UST (\mathbb{T}^d_n)$ for $d \geq 5$, where $\mathbb{T}^d_n$ is the $d$-dimensional torus with side length $\lfloor n^{1/d} \rfloor$, and also other high dimensional graphs such as he hypercube and expander graphs (see \cite{PeresRevelleUSTCRT}, \cite{ANS2021ghp} and\cite{archer2023ghp} for more precise results). This leads us to the following question.

\begin{question}
    Do the scaling limit results of \cref{thm:AB convergence} also hold for choice spanning trees of the high-dimensional torus, and for other high-dimensional graphs?
\end{question}

Note that, to define $\TWilk (\mathbb{T}^d_n)$, or on another general graph, one must first specify an ordering of the vertices, since this order a priori affects the outcome of the sampling algorithm. A natural choice would be to take a uniform ordering.

We remark also that we do not expect \cref{thm:choice trees the same} to hold for other graphs (except perhaps in some very special cases); the previous question could therefore be explored for either of the two algorithms.

Finally, we remark that USTs satisfy some nice properties linking them with determinants of certain matrices; more precisely the probability that a certain set of edges appear in $\UST(G)$ is given by the determinant of a matrix related to $G$ (this is known as the transfer-current theorem). 

\begin{question}
    Given a set of edges $(e_1, \ldots, e_m) \subset K_n$, can the probability $\pr{(e_1, \ldots, e_m) \in \TABk_{\text{max}}(K_n)}$ be expressed as the determinant of a related matrix? Similarly for $\TABk_{\text{unif}}(K_n)$?
\end{question}
\clearpage

\bibliographystyle{abbrv}
\bibliography{biblio}

\end{document}